\theoremstyle{plain}
\newtheorem{theorem}{Theorem}[section]
\newtheorem{proposition}[theorem]{Proposition}
\newtheorem{lemma}[theorem]{Lemma}
\newtheorem{corollary}[theorem]{Corollary}
\newtheorem{claim}{}[theorem]
\theoremstyle{definition}
\newtheorem{definition}[theorem]{Definition}
\newcommand{\cl}{\operatorname{cl}}
\newcommand{\tw}{\operatorname{tw}}
\newcommand{\bw}{\operatorname{bw}}
\newcommand{\dash}{\nobreakdash-\hspace{0mm}}
\title[Supersolvable and saturated matroids]{Supersolvable saturated matroids and chordal graphs}
\author[Mayhew]{Dillon Mayhew}
\author[Probert]{Andrew Probert}
\begin{document}

\begin{abstract}
A matroid is supersolvable if it has a maximal chain of flats each of which is modular.
A matroid is saturated if every round flat is modular.
In this article we present supersolvable saturated matroids as analogues to chordal graphs, and we show that several results for chordal graphs hold in this matroid context.
In particular, we consider matroid analogues of the reduced clique graph and clique trees for chordal graphs.
The latter is a maximum-weight spanning tree of the former.
We also show that the matroid analogue of a clique tree is an optimal decomposition for the matroid parameter of tree-width.
\end{abstract}

\maketitle

\section{Introduction}

The study of chordal graphs is well established, and dates to work by Dirac \cite{Dirac} and Berge \cite{Berge}.
Our contribution here is to consider a new analogue of chordality for matroids.
A graph is chordal if every cycle with at least four vertices has a chord.
This leads fairly directly to the definition of a chordal matroid used by Cordovil, Forge, and Klein \cite{CFK04}.
If $C$ is a circuit in a matroid, then a \emph{chord} of $C$ is an element $z\notin C$ such that there is a partition of $C$ into parts $A$ and $B$ where $A\cup z$ and $B\cup z$ are both circuits.
We will say that a matroid is \emph{$C$\dash chordal} if every circuit with size at least four has a chord.
(Cordovil et al.\ call such a matroid chordal, but we will try to avoid confusion by reserving that term solely for graphs.)

In this article we concentrate on a different matroid analogue for chordality.
An alternative characterisation of chordal graphs is due to Dirac \cite{Dirac}: a vertex is \emph{simplicial} if its neighbours are pairwise adjacent.
Now $G$ is chordal if and only if it has a simplicial vertex $v$ such that $G-v$ is chordal.
This definition is well suited for matroid purposes, because the edges not incident with a simplicial vertex comprise a modular hyperplane in the corresponding graphic matroid.
(A flat $F$ is \emph{modular} if $r(F)+r(F') = r(F\cap F')+r(F\cup F')$ for every flat $F'$.
A hyperplane is modular if and only if it has a non-empty intersection with every rank-two flat of the matroid.)
Now we can recursively consider the class of matroids $\mathcal{M}$ such that $M$ is in $\mathcal{M}$ if and only if $M$ has a modular hyperplane $H$ where restricting $M$ to $H$ produces a matroid in $\mathcal{M}$.
The class $\mathcal{M}$ is exactly the family of \emph{supersolvable} matroids, introduced by Stanley \cite{stanley}.

\begin{figure}[htb]
    \centering
    \includegraphics{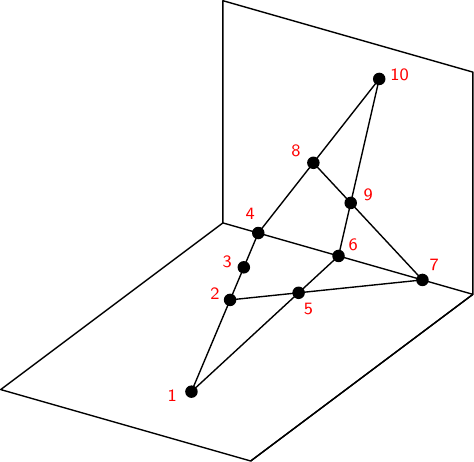}
    \caption{A supersolvable matroid}
    \label{Fig2}
\end{figure}

Figure \ref{Fig2} shows a geometric representation of a rank-four matroid, $M$.
We see that the hyperplane $F_{3}=\{1,2,3,4,5,6,7\}$ is modular, since every rank-two flat has a non-empty intersection with $F_{3}$.
In the same way, $F_{2}=\{1,2,3,4\}$ is a modular hyperplane of the restriction to $F_{3}$, and $F_{1}=\{1\}$ is a modular hyperplane of the restriction to $F_{2}$.
Finally, $\emptyset$ is a modular hyperplane of the restriction to $F_{1}$.
It follows that $M$ is supersolvable.

It turns out that the condition of supersolvability is not strong enough for our purposes because supersolvable matroids may fail to have properties shared by all graphic matroids.
To expand on this point, we consider matroid analogues of cliques in a graph.
Let $F$ be a flat of a matroid.
Then $F$ is \emph{round} if there is no pair of flats $(F_{1},F_{2})$ such that $F=F_{1}\cup F_{2}$ and $F_{1}$ and $F_{2}$ are properly contained in $F$.
Let $G$ be a graph and let $F$ be a flat of the graphic matroid $M(G)$.
Then $F$ is round if and only if $G[F]$ is a clique (Proposition \ref{prop1}).
Therefore we think of round flats as the matroid analogues of cliques.
In graphic matroids every round flat is modular but this is not true for matroids in general, nor is it true for supersolvable matroids.
For example, if $M$ is the matroid in Figure \ref{Fig2}, then $\{4,6,7,8,9,10\}$ is a round hyperplane, since it cannot be expressed as the union of two flats that it properly contains.
However, it is not modular, since it has an empty intersection with the rank-two flat $\{3,5\}$.

We define a matroid to be \emph{saturated} if every round flat is modular.
Thus saturated matroids can be thought of as analogues to graphs.
To this condition, we add the condition of supersolvability to obtain our matroid analogue of chordal graphs.
So our fundamental objects of study are supersolvable and saturated matroids.
The graphic matroid $M(G)$ is supersolvable and saturated if and only $G$ is chordal (Corollary \ref{graphic-implies-saturation} and Proposition \ref{chordal-iff-supersolv}).
Many other examples arise: for example, the matroids that are constructed using generalised parallel connections, starting with the projective geometries of a given order.
Any such matroid is supersolvable and saturated.

The class of supersolvable saturated matroids is properly contained in the class of $C$\dash chordal matroids (Proposition \ref{SSS-implies-chordal}).
So our focus is on a proper subclass of $C$\dash chordal matroids.
The relationships between the conditions of supersovability, saturation, and $C$\dash chordality are illustrated in Figure \ref{Fig4}.
We will justify this Venn diagram in Section \ref{CHORDALITY}.
\begin{figure}[htb]
    \centering
    \includegraphics[scale=1.1]{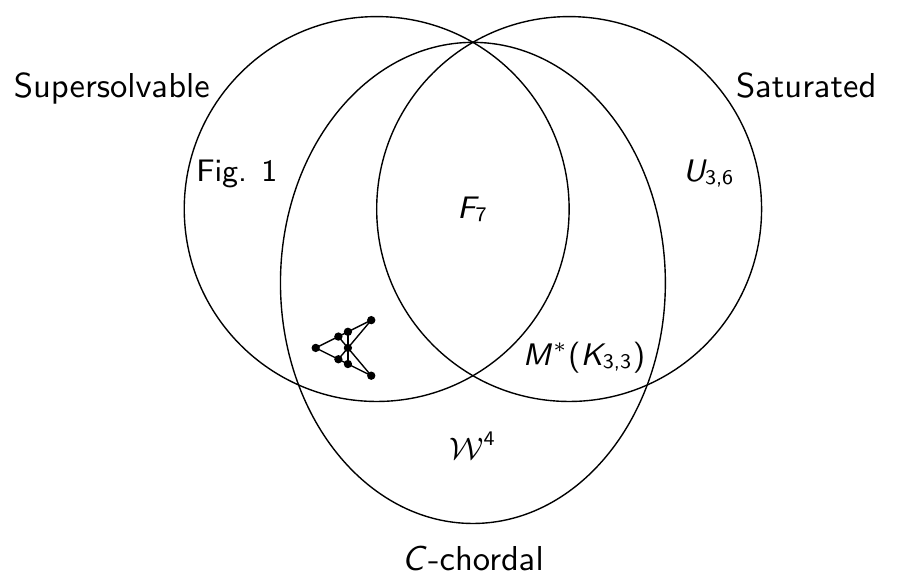}
    \caption{Three matroid definitions}
    \label{Fig4}
\end{figure}

Our main focus is showing that many facts about chordal graphs have analogues in the class of supersolvable saturated matroids.
In particular, Section \ref{RC-and-ROTgraphs} introduces one of our main ideas: the \emph{rotunda graph} of such a matroid.
A \emph{rotunda} is a maximal round flat.
The vertices of the rotunda graph are the rotunda of the matroid.
Assume that $R_{1}$ and $R_{2}$ are distinct rotunda with a non-empty intersection and that $(F_{1},F_{2})$ is a pair of modular flats of $M$ such that $E(M)=F_{1}\cup F_{2}$ and neither $F_{1}$ nor $F_{2}$ is equal to $E(M)$.
If $R_{i}\subseteq F_{i}$ for $i=1,2$ and $F_{1}\cap F_{2}=R_{1}\cap R_{2}$, then we make $R_{1}$ and $R_{2}$ adjacent in the rotunda graph.
The idea of a rotunda graph is analogous to the \emph{reduced clique graph} introduced by Galinier, Habib, and Paul in \cite{GHP95} (where it is called a clique graph).
If $G$ is a chordal graph, then the vertices of the reduced clique graph of $G$ are the maximal cliques of $G$.
If $C$ and $C'$ are maximal cliques then they are adjacent if $C\cap C'\ne \emptyset$ and any path from a vertex of $C-C'$ to a vertex of $C'-C$ uses a vertex of $C\cap C'$.

If $G$ is a chordal graph then the reduced clique graph of $G$ and the rotunda graph of $M(G)$ need not be the same, but this is only because $G$ may have low connectivity.
In Proposition \ref{2-con-same} we show that when $G$ is $2$\dash connected the reduced clique graph of $G$ and the rotunda graph of $M(G)$ are identical.
We can go further than this: the class of reduced clique graphs and the class of rotunda graphs are identical.

\begin{theorem}
\label{ROTgraph-same-as-RCgraph}
Let $H$ be a graph.
Then $H$ is isomorphic to the rotunda graph of a supersolvable saturated matroid if and only if $H$ is isomorphic to the reduced clique graph of a chordal graph.
\end{theorem}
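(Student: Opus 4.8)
The plan is to prove the two set inclusions underlying the stated equality of classes. The easier inclusion is that every reduced clique graph of a chordal graph is the rotunda graph of some supersolvable saturated matroid, and for this I would reduce to Proposition \ref{2-con-same}. Given a chordal graph $G$, I would first treat its connected components separately: the reduced clique graph of $G$ is the disjoint union of the reduced clique graphs of the components, a direct sum of supersolvable saturated matroids is supersolvable and saturated, and (as one checks) the rotunda graph of a direct sum is the disjoint union of the rotunda graphs of the summands. For a connected chordal graph I would produce a $2$\dash connected chordal graph with an isomorphic reduced clique graph by repeatedly ``thickening'' cut vertices: if $u$ is a cut vertex lying in maximal cliques $C_{i}$ and $C_{j}$ joined by an edge of some clique tree, adjoin a new vertex with neighbourhood $C_{i}\cup C_{j}$. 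One verifies that each such step preserves chordality, does not change the reduced clique graph up to isomorphism, and that after finitely many steps no cut vertex remains. Proposition \ref{2-con-same} then identifies the reduced clique graph of the resulting $2$\dash connected chordal graph $G'$ with the rotunda graph of $M(G')$, which is supersolvable and saturated.

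For the reverse inclusion I would use matroid clique trees, that is, maximum\dash weight spanning trees of the rotunda graph in which the edge $R_{1}R_{2}$ has weight $r(R_{1}\cap R_{2})$. The first step is to establish the matroid analogue of the description of the reduced clique graph due to Galinier, Habib, and Paul \cite{GHP95}: the rotunda graph of a supersolvable saturated matroid $M$ is the union of all of its matroid clique trees, and every matroid clique tree satisfies a running\dash intersection property, namely $R\cap R'\subseteq R''$ whenever $R''$ is a rotunda lying on the tree path between $R$ and $R'$. The second step is to turn this data into a chordal graph. Fixing a matroid clique tree $\mathcal{T}$ of $M$, I would assign to each rotunda $R$ a set $C_{R}$ of $r(R)+1$ new vertices and glue these sets along the edges of $\mathcal{T}$ so that rotunda adjacent in $\mathcal{T}$ share exactly $r(R\cap R')+1$ vertices; the running\dash intersection property ensures this gluing is consistent, and a standard argument then shows that the union of cliques on these vertex sets is a chordal graph $G$ whose maximal cliques are exactly the sets $C_{R}$ and for which $\mathcal{T}$ is a clique tree. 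It remains to check that the reduced clique graph of $G$ equals the rotunda graph of $M$; since the former is the union of the clique trees of $G$ and the latter the union of the matroid clique trees of $M$, this amounts to matching these two families of spanning trees, for which it is enough that $|C_{R}\cap C_{R'}| = r(R\cap R')+1$ for all rotunda $R,R'$ (not just those adjacent in $\mathcal{T}$).

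I expect the bulk of the work, and the only real obstacle, to be this second direction, and within it the two points at which supersolvability and saturation must genuinely be used. First, proving that matroid clique trees behave like clique trees --- the running\dash intersection property and the ``union of matroid clique trees'' identity for the rotunda graph --- requires controlling the ranks of pairwise intersections of rotunda, and this is exactly where saturation enters, since it forces round flats, and hence these intersections, to be modular and so to interact well with the rank function. Second, one must rule out accidental maximal cliques or incorrect intersections in the constructed graph $G$, so that its reduced clique graph has precisely the edges of the rotunda graph of $M$ and no others; this again reduces to showing that the numbers $r(R\cap R')$ obey the same Helly\dash type constraints that the intersection sizes of maximal cliques of a chordal graph satisfy.
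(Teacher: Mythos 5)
Your first direction contains a concrete error. The local ``thickening'' step --- adding a new vertex whose neighbourhood is $C_{i}\cup C_{j}$ for two maximal cliques joined by a clique-tree edge --- does not preserve the reduced clique graph when it is iterated. Take $G=K_{1,3}$ with centre $u$ and leaves $a,b,c$, so $C_{R}(G)\cong K_{3}$. Applying your rule to the clique-tree edge $\{u,a\}\{u,b\}$ adds $x$ adjacent to $u,a,b$; the cut vertex $u$ remains, and applying the rule again to the clique-tree edge $\{u,b,x\}\{u,c\}$ adds $y$ adjacent to $u,b,x,c$. The resulting graph is $2$\dash connected and chordal with maximal cliques $A=\{u,a,x\}$, $B=\{u,b,x,y\}$, $C=\{u,c,y\}$, but the edge $xy$ gives a path from $A-C$ to $C-A$ avoiding $A\cap C=\{u\}$, so $C_{R}$ is now a path, not a triangle. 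The problem is that the new vertex bridges two cliques at the cut vertex while other cliques through that vertex persist, destroying separating pairs between them. The paper avoids this by adding, for each cut vertex $v_{i}$, a single clone $v_{i}'$ adjacent to $v_{i}$ and \emph{all} of its neighbours (so the clone enters every maximal clique through $v_{i}$); this is what makes the bijection on maximal cliques and the preservation of separating pairs go through in Lemma \ref{RCgraph-is-ROTgraph}.

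The second direction has a more serious structural gap. Your first step asserts, without argument, the matroid analogues of the Galinier--Habib--Paul facts: that every maximum-weight spanning tree of $R(M)$ is a rotunda tree with the running-intersection property, and that $R(M)$ is the union of these trees (even the connectivity of $R(M)$ needs proof). In the paper these statements are Theorem \ref{trees_main_2} and Lemma \ref{ROTgraph-connected}, and they are \emph{deduced from} the construction you are trying to replace (the compliant pair $(G,\theta)$ of Lemma \ref{ROTgraph-is-RCgraph}, which transfers everything to Theorem \ref{maintheorem2} for chordal graphs); so you cannot quote them here without circularity, and proving them directly from saturation and supersolvability is essentially the hard content of the paper, not a preliminary step. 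Your second step also understates the consistency problem: gluing cliques of sizes $r(R)+1$ along a tree with prescribed cardinalities $r(R\cap R')+1$ on tree edges does not by itself control $C_{R}\cap C_{R'}$ for non-adjacent pairs, and adjacency in $C_{R}(G)$ depends on the actual vertex sets of these intersections (e.g.\ three rotunda whose pairwise intersections are three distinct rank-one flats must be realised differently from three rotunda sharing one rank-one flat, although all six cardinalities agree). Matching numbers is not enough; one needs set-level compatibility, which is exactly what the paper's element-by-element map $\theta$ (two vertices per element, each vertex in the image of exactly one element, so that $\theta(R)\cap\theta(R')=\theta(R\cap R')$ automatically) together with the inductive compliance conditions along the chain of modular hyperplanes is designed to provide.
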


We prove this theorem in Section \ref{VS}.
It tells us that although a supersolvable saturated matroid may be far from graphic, the structure of its rotunda will be mirrored by the structure of maximal cliques in a chordal graph.

Knowing that these two classes of graphs are identical allows us to deduce facts about the structure of rotunda graphs from the facts about reduced clique graphs that we list in \cite{graphpaper}.
For example, in \cite{graphpaper} we show that the reduced clique graph of a chordal graph may have induced cycles of length three, four, or six, but not five.
Therefore the same statement applies to rotunda graphs.
We conjecture that a reduced clique graph cannot have an induced cycle of length greater than six, so we therefore conjecture that the same statement holds for rotunda graphs.
In \cite{graphpaper} we show that no rotunda graph can be isomorphic to a cycle of length at least four.
Thus the class of rotunda graphs is properly contained in the class of graphs with no induced cycle of length five.
We also believe that every chordal graph is isomorphic to the rotunda graph of some supersolvable saturated matroid, and that
there is a polynomial-time algorithm for recognising when a given graph is isomorphic to some rotunda graph.

A \emph{clique tree} of the graph $G$ is a tree whose nodes are the maximal cliques of $G$, where the set of maximal cliques containing an arbitrary vertex $v\in V(G)$ induces a subtree of $T$.
Clique trees were introduced by Gavril \cite{Gavril}, who showed that $G$ has a clique tree if and only if $G$ is chordal.
The analogue for a supersolvable saturated matroid $M$ is a \emph{rotunda tree}.
In this case the nodes of the rotunda tree are the rotunda of $M$, and the rotunda containing an arbitrary element $x\in E(M)$ induces a subtree.
A matroid may have a rotunda tree without being supersolvable and saturated.
For example, the matroid in Figure \ref{Fig2} is not saturated, but it does have a rotunda tree (having two nodes, corresponding to $\{1,2,3,4,5,6,7\}$ and $\{4,6,7,8,9,10\}$).

Galinier et al.\ \cite{GHP95} weight the edges of reduced clique graphs.
The edge that joins maximal cliques $C$ and $C'$ is weighted with $|C\cap C'|$.
They then prove that a spanning tree of the reduced clique graph is a clique tree if and only if it has maximum total weight amongst all spanning trees.
(Their proof contains a flaw, which we explain and correct in \cite{graphpaper}.)
In our analogous result we weight the edges of rotunda graphs.
The edge that joins rotunda $R$ and $R'$ is weighted with the rank of $R\cap R'$.
(Our techniques are general enough that we could also weight it with $|R\cap R'|$).
In Section \ref{TREES} we prove the following.

\begin{theorem}
\label{trees_main_1}
Let $M$ be a connected supersolvable and saturated matroid.
Every rotunda tree of $M$ is a spanning tree of the rotunda graph of $M$.
Every edge of the rotunda graph is contained in a rotunda tree.
Moreover, a spanning tree is a rotunda tree if and only if it has maximum weight amongst all spanning trees.
\end{theorem}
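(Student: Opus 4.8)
The plan is to establish the three assertions in turn, following the template of our treatment of clique trees in \cite{graphpaper}, but substituting the structure theory of supersolvable saturated matroids wherever the graph argument invokes chordality. First I would show that a rotunda tree $T$ is a spanning tree of the rotunda graph. Its node set is the set of rotunda by definition, so it is enough to check that each edge $RR'$ of $T$ is an edge of the rotunda graph. Deleting $RR'$ splits $T$ into components $T_R\ni R$ and $T_{R'}\ni R'$; put $F_1=\cl\bigl(\bigcup_{S\in T_R}S\bigr)$ and $F_2=\cl\bigl(\bigcup_{S\in T_{R'}}S\bigr)$. Then $F_1\cup F_2=E(M)$, since every rank-one flat is round and hence lies in a rotunda, so the rotunda cover $E(M)$; and $F_1\cap F_2=R\cap R'$, since the containment $\supseteq$ is immediate while an element of $F_1\cap F_2$ outside $R\cap R'$ would lie in a rotunda on each side of the edge, contradicting the defining property that the rotunda containing it induce a (connected) subtree of $T$. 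The remaining point, that $F_1$ and $F_2$ are proper modular flats, is where supersolvability and saturation are genuinely used; granting it, the pair $(F_1,F_2)$ witnesses that $RR'$ is an edge of the rotunda graph.

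Next I would show that every edge $RR'$ of the rotunda graph lies in a rotunda tree, arguing by induction on $r(M)$. Fix a modular pair $(F_1,F_2)$ with $R\subseteq F_1$, $R'\subseteq F_2$, $F_1\cup F_2=E(M)$, and $F_1\cap F_2=R\cap R'$. The restrictions $M|F_1$ and $M|F_2$ are again supersolvable and saturated; $R$ is a rotunda of $M|F_1$, $R'$ is a rotunda of $M|F_2$, and every rotunda of $M$ lies inside $F_1$ or inside $F_2$, because a round flat $S$ meeting both sides would satisfy $S=(S\cap F_1)\cup(S\cap F_2)$, a union of two proper subflats. By induction there are rotunda trees $T_1$ of $M|F_1$ and $T_2$ of $M|F_2$; gluing them along the new edge $RR'$ yields a tree on all the rotunda of $M$, and the subtree condition is verified for each $x\in E(M)$ by the three cases $x\in F_1\setminus F_2$, $x\in F_2\setminus F_1$, and $x\in F_1\cap F_2=R\cap R'$, using in the last case that $R$ and $R'$ both contain $x$.

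For the weight statement I would mirror the corrected Galinier--Habib--Paul argument from \cite{graphpaper}. The engine is a decomposition of the edge weights into $0/1$ contributions. Since $M$ is supersolvable, fix a maximal flag of modular flats $\emptyset=X_0\subsetneq X_1\subsetneq\cdots\subsetneq X_r=E(M)$, and call a rotunda $R$ \emph{active at level $i$} when $r(R\cap X_i)>r(R\cap X_{i-1})$; modularity of the flag forces each consecutive rank difference to be $0$ or $1$, so each $R$ is active at exactly $r(R)$ levels. The crucial claim --- the place where saturation re-enters --- is that for any two rotunda $R$ and $R'$, the rank $r(R\cap R')$ equals the number of levels at which $R$ and $R'$ are both active. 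Granting this, any spanning tree $T$ of the rotunda graph satisfies $\sum_{RR'\in T}r(R\cap R')=\sum_{i=1}^{r}e_i(T)$, where $e_i(T)$ is the number of edges of $T$ with both ends active at level $i$; the rotunda active at level $i$ induce a forest in the tree $T$, so $e_i(T)\le a_i-1$, where $a_i$ is their number, with equality exactly when they induce a subtree of $T$. Hence every spanning tree has weight at most $\sum_{i=1}^{r}(a_i-1)=\bigl(\sum_{R}r(R)\bigr)-r(M)$, with equality precisely when, at every level $i$, the rotunda active at level $i$ induce a subtree --- a condition I would then show is equivalent to $T$ being a rotunda tree. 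Because a rotunda tree exists (apply the previous paragraph to any edge of the connected rotunda graph), this bound is attained, so the maximum-weight spanning trees are exactly the rotunda trees. The same argument with $|R\cap R'|=\sum_{x\in E(M)}[\,x\in R\cap R'\,]$ in place of the rank, the contributions now indexed by ground-set elements rather than flag levels, gives the analogue for the size weighting and needs no auxiliary claim.

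I expect the main obstacle to be the matroid-structural input, rather than the tree bookkeeping: proving that the closure of the union of the rotunda in a subtree of a rotunda tree is a proper modular flat, and that $r(R\cap R')$ decomposes over the levels of a modular flag as above. Neither statement has a direct graph-theoretic antecedent, and both seem to require combining supersolvability with saturation in an essential way. My intention would be to isolate these as a preliminary lemma on how rotunda, modular flats, and maximal modular flags interact in a supersolvable saturated matroid, after which the three assertions of the theorem follow as sketched.
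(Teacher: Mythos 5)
Your route is genuinely different from the paper's, which proves the theorem by transferring everything to chordal graphs: Lemma \ref{ROTgraph-is-RCgraph} supplies a $2$\dash connected chordal graph compliant with $M$, rotunda trees are matched with clique trees, and the statement is then quoted from Theorem \ref{maintheorem2}. A direct matroid proof along your lines would be valuable, but as written the proposal defers exactly the load-bearing steps. In your first part you take $F_{1},F_{2}$ to be closures of the unions of the rotunda on the two sides of a tree edge; your argument that $F_{1}\cap F_{2}=R\cap R'$ only controls elements of the unions, not of their closures, and the decisive facts --- that these sets are flats, are proper, and are modular --- are precisely the content of Proposition \ref{modular-edge}, which the paper itself can only establish through the compliant-graph machinery (condition (iv) of Definition \ref{compliant-graph}). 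Writing ``granting it'' here skips the core difficulty rather than a routine verification; likewise your induction in the second part silently assumes that $M|F_{i}$ is connected and supersolvable, which needs (short) arguments of its own.

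More seriously, the ``crucial claim'' driving your weight argument is false as stated. Take the chordal graph $G$ on vertices $s,c,c',p,q$ with edges $cc'$, $cs$, $c's$, $cp$, $ps$, $c'q$, $qs$; then $M(G)$ is connected, supersolvable and saturated. Use the modular flag coming from the reverse perfect elimination order $c,c',s,p,q$, and let $R$ and $R'$ be the rotunda corresponding to the triangles on $\{c,p,s\}$ and $\{c',q,s\}$. Then $R\cap R'=\emptyset$, so $r(R\cap R')=0$, yet both $R$ and $R'$ gain rank at the level where $s$ enters the flag, so the number of levels at which both are active is $1$. The identity you need can only hold for pairs of rotunda that are adjacent in $R(M)$ (equivalently, pairs admitting a modular cover certificate), and proving even that restricted version requires using the adjacency structure in an essential way --- in the graphic case it follows from the separating-pair property together with the connectivity of prefixes of a reverse perfect elimination order --- but your sketch neither notes the restriction nor supplies such an argument. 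The closing equivalence, that a spanning tree attains equality at every level if and only if it is a rotunda tree, is likewise asserted without proof and is not obvious even for graphs. So the proposal is an outline whose two central lemmas are unproved, and one of them is stated in a form that is actually false; to complete this approach you would need to prove the analogue of Proposition \ref{modular-edge} directly and a corrected, adjacency-restricted version of the level-counting claim.
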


In Section \ref{DECOMPS} we concentrate on tree-decompositions of optimal width.
In unpublished work, Heggernes \cite{Heggernes} observed that a clique tree of a chordal graph is an optimal decomposition of the graph with respect to the parameter of tree-width.
A matroid analogue of tree-width was developed by Hlin\v{e}n\'{y} and Whittle \cite{hlineny_whittle}, and in Theorem \ref{tree_theorem_two} we prove the matroid analogue of Heggernes's observation: any rotunda tree of a supersolvable and saturated matroid is an optimal decomposition with respect to the matroid parameter of tree-width.

We refer to \cite{Oxley11} for the foundations of matroid theory.

\section{Preliminaries}

\subsection{Chordal graphs}

Let $G$ be a graph.
If $X$ is a set of vertices in $G$, then $G[X]$ is the subgraph induced by $X$.
We say that a path $P$ is \emph{$X$\dash avoiding} if any vertex of $X$ in $P$ is a terminal vertex of $P$.
A \emph{clique} of $G$ is a set of pairwise adjacent vertices.
We blur the distinction between a subgraph, its vertex set, and its edge set.
So for example we may refer to a clique of the graph $G$ as being a flat in the cyclic matroid $M(G)$.

If $C$ is a cycle of a graph, then a \emph{chord} is an edge that joins two distinct vertices of the cycle without being an edge of the cycle or parallel to any such edge.
A graph is \emph{chordal} if every cycle with at least four vertices has a chord.
Thus a graph is chordal if and only if has no induced cycle with more than three vertices.
Clearly every induced subgraph of a chordal graph is chordal.

Let $G$ be a graph, and let $v$ be a vertex of $G$.
If deleting $v$ from $G$ produces a graph with more connected components than $G$, then $v$ is a \emph{cut-vertex} of $G$.
A connected graph with no cut-vertex is \emph{$2$\dash connected}.

An ordering $v_{1},\ldots, v_{n}$ of the vertices in a graph is a \emph{perfect elimination order} if the neighbours of $v_{i}$ amongst $v_{i+1},\ldots, v_{n}$ form a clique, for each $i$.
A proof of the following can be found in \cite{Golumbic04}*{Theorem 4.1}.

\begin{proposition}
\label{chordal-characterisation}
A graph is chordal if and only if it has a perfect elimination order.
\end{proposition}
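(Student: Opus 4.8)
The plan is to prove the two implications separately. The reverse implication is essentially immediate, while the forward implication rests on Dirac's observation that every chordal graph has a simplicial vertex.

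For the reverse direction, suppose $v_{1},\ldots,v_{n}$ is a perfect elimination order of $G$ and, for a contradiction, that $G$ contains an induced cycle $C = u_{1}u_{2}\cdots u_{k}u_{1}$ with $k\geq 4$. Let $u_{j}$ be the vertex of $C$ of smallest index in the elimination order, and let $u_{j-1}$ and $u_{j+1}$ (indices modulo $k$) be its two neighbours on $C$. Both occur after $u_{j}$ in the order, so the definition of a perfect elimination order forces $u_{j-1}$ and $u_{j+1}$ to be adjacent. But in an induced cycle of length at least four the two neighbours of any vertex are non-adjacent, a contradiction. Hence $G$ has no induced cycle of length greater than three, and so $G$ is chordal.

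For the forward direction, the first step is to establish the key lemma of Dirac: every chordal graph has a simplicial vertex, and every chordal graph that is not complete has two non-adjacent simplicial vertices. I would prove this by induction on $|V(G)|$. If $G$ is complete the statement is clear, so suppose $a$ and $b$ are non-adjacent and let $S$ be an inclusion-minimal set of vertices whose removal separates $a$ from $b$. A short argument using chordality shows $S$ is a clique: if $x,y\in S$ were non-adjacent then minimality of $S$ yields a shortest $x$--$y$ path whose interior lies in the component of $G-S$ containing $a$ and another whose interior lies in the component containing $b$, and these combine into an induced cycle of length at least four. Let $A$ be the vertex set of the component of $G-S$ containing $a$. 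The graph $G[A\cup S]$ is chordal with fewer vertices than $G$; if it is complete then any vertex of $A$ is simplicial there, and since no vertex of $A$ has a neighbour outside $A\cup S$ it is simplicial in $G$, while if $G[A\cup S]$ is not complete then by induction it has two non-adjacent simplicial vertices, at least one of which lies in $A$ (as $S$ is a clique) and hence is simplicial in $G$. The symmetric argument applied to the component containing $b$ gives a second simplicial vertex of $G$, non-adjacent to the first because the two vertices lie in different components of $G-S$.

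With the lemma available, the forward direction follows by a second, routine induction on $|V(G)|$: choose a simplicial vertex $v$ of the chordal graph $G$, set $v_{1}=v$, note that $G-v$ is an induced subgraph of $G$ and therefore chordal, apply the inductive hypothesis to get a perfect elimination order $v_{2},\ldots,v_{n}$ of $G-v$, and observe that $v_{1},v_{2},\ldots,v_{n}$ is a perfect elimination order of $G$ since the later neighbours of $v_{1}$ are precisely the neighbours of $v$, which form a clique. The main obstacle is the separator argument inside the key lemma; once the existence of a simplicial vertex is secured, everything else is bookkeeping.
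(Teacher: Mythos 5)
Your proof is correct. The paper does not prove this proposition at all --- it simply cites \cite{Golumbic04}*{Theorem 4.1} --- and what you have written is precisely the standard argument found there: the easy direction via the earliest vertex of an induced cycle, and the hard direction via Dirac's lemma that every chordal graph has a simplicial vertex (indeed two non-adjacent ones when the graph is not complete), proved by the minimal-separator-is-a-clique argument, followed by a routine induction peeling off simplicial vertices. Nothing is missing; the separator argument and both inductions are stated with enough detail to be checked.
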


\subsection{Modularity}

Let $M$ be a matroid.
The flat $F$ is \emph{modular} if $r(F)+r(F')=r(F\cup F') +r(F\cap F')$ whenever $F'$ is a flat.
Note that the entire ground set is trivially a modular flat.
We also see that the unique rank-zero flat is modular.
The following is proved in \cite{Oxley11}*{Proposition 6.9.2}.

\begin{proposition}
\label{mod-flat-characterisation}
Let $F$ be a flat of the matroid $M$.
Then $F$ is modular if and only if $r(F) + r(F')=r(F\cup F')$ whenever $F'$ is a flat such that $F\cap F'=\emptyset$.
\end{proposition}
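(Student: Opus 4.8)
The forward direction needs no work: if $F$ is modular then the modular identity holds for \emph{every} flat $F'$, and when $F\cap F'=\emptyset$ the term $r(F\cap F')$ vanishes. (Here I would assume $M$ is loopless, so that the rank-zero flat is $\emptyset$; if $M$ has loops every pair of flats meets in the set of loops and the hypothesis becomes vacuous, so the condition $F\cap F'=\emptyset$ must be read as $r(F\cap F')=0$, which is the only reading making the statement correct.) So the real content is the converse, and the plan is to manufacture, out of an arbitrary flat $F'$, a companion flat $F^{*}$ that \emph{is} disjoint from $F$ and to which the hypothesis can be applied to recover the modular identity for $F'$.

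Concretely, given a flat $F'$, I would set $X=F\cap F'$, choose a basis $B_{X}$ of $X$, extend it to a basis $B_{X}\cup I$ of $F'$ (so $I$ is independent, $|I|=r(F')-r(X)$, and $I\subseteq F'\setminus X$), and put $F^{*}=\cl(I)$. The first thing to check is that $F^{*}$ misses $F$ entirely. Since $F^{*}\subseteq F'$ we have $F^{*}\cap F\subseteq F'\cap F=X$, so it is enough to see that $F^{*}\cap X=\emptyset$; this is where both looplessness and submodularity earn their keep: $\cl(F^{*}\cup X)=\cl(I\cup B_{X})=F'$ while $r(F^{*})+r(X)=(r(F')-r(X))+r(X)=r(F')$, so submodularity forces $r(F^{*}\cap X)=0$, whence $F^{*}\cap X=\emptyset$ because $M$ is loopless and $F^{*}\cap X$ is a flat. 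The second, routine, point is that $\cl(F\cup F^{*})=\cl(F\cup I)=\cl(F\cup F')$, since $B_{X}\subseteq F$ and $\cl(B_{X}\cup I)=F'$.

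With $F^{*}$ in hand the computation is immediate: applying the hypothesis to the flat $F^{*}$, which is disjoint from $F$, gives $r(F)+r(F^{*})=r(F\cup F^{*})=r(F\cup F')$, and substituting $r(F^{*})=|I|=r(F')-r(X)$ yields $r(F)+r(F')=r(F\cup F')+r(X)=r(F\cup F')+r(F\cap F')$. Since $F'$ was an arbitrary flat, $F$ is modular. The step I expect to be the main obstacle is precisely the construction of $F^{*}$ together with the realisation that a rank-complement of $X$ inside $F'$ automatically meets $X$ (and hence $F$) only in the rank-zero flat; once that is seen, everything else is bookkeeping with closures and the defining rank identity of a modular flat.
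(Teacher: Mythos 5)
Your proof is correct. Note that the paper does not prove this statement at all --- it is quoted from Oxley (Proposition 6.9.2) --- so there is no in-paper argument to compare against; your basis-extension construction (take a basis of $F\cap F'$, extend it to a basis of $F'$, and let $F^{*}$ be the closure of the extending set, which submodularity forces to meet $F$ in a rank-zero flat) is exactly the standard proof of that cited result. Your side remark about loops is also apt: read literally, the disjointness condition is vacuous in a matroid with loops, so one should either assume $M$ is loopless or read ``$F\cap F'=\emptyset$'' as ``$F\cap F'$ is the rank-zero flat''; this does not affect the paper, which only applies the criterion in contexts where the distinction is harmless.
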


It follows easily that if $F$ is a hyperplane, then $F$ is modular if and only if $r(F\cap L) = 1$ whenever $L$ is a rank-$2$ flat not contained in $F$.
We often use an equivalent definition.

\begin{proposition}
\label{mod-flat-new-def}
Let $F$ be a flat of the matroid $M$.
Then $F$ is modular if and only if there is no circuit $C\subseteq F\cup F'$ containing elements from both $F$ and $F'$, whenever $F'$ is a flat that is disjoint from $F$.
\end{proposition}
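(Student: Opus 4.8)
The plan is to peel off the rank condition using Proposition~\ref{mod-flat-characterisation}, reducing the claim to a statement purely about independent sets: for arbitrary sets $A$ and $B$ with $A\cap B=\emptyset$, one has $r(A)+r(B)=r(A\cup B)$ if and only if no circuit contained in $A\cup B$ meets both $A$ and $B$. Granting this, we apply it with $A=F$ and with $B=F'$ ranging over the flats disjoint from $F$; since Proposition~\ref{mod-flat-characterisation} says $F$ is modular exactly when $r(F)+r(F')=r(F\cup F')$ for every such $F'$, the proposition follows. Note that flathood of $F'$ enters only when invoking Proposition~\ref{mod-flat-characterisation}; the rank identity itself needs nothing beyond $A\cap B=\emptyset$.

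For the rank identity I would prove both directions by contraposition. Suppose first there is a circuit $C\subseteq A\cup B$ with $C\cap A\neq\emptyset\neq C\cap B$. Since $A\cap B=\emptyset$, the sets $C\cap A$ and $C\cap B$ partition $C$, and each, being a proper subset of a circuit, is independent; extend $C\cap A$ to a basis $B_A$ of $A$ and $C\cap B$ to a basis $B_B$ of $B$. These bases are disjoint, so $|B_A\cup B_B|=r(A)+r(B)$; moreover $B_A\cup B_B$ spans $A\cup B$ and contains the circuit $C$, hence is dependent, so $r(A\cup B)=r(B_A\cup B_B)<|B_A\cup B_B|=r(A)+r(B)$.

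Conversely, suppose $r(A)+r(B)\neq r(A\cup B)$. A basis of $A$ together with a basis of $B$ spans $A\cup B$, so $r(A\cup B)\leq r(A)+r(B)$ and the inequality must be strict. Choosing such bases $B_A,B_B$ (again disjoint), the set $B_A\cup B_B$ spans $A\cup B$ but has cardinality $r(A)+r(B)>r(A\cup B)$, so it is dependent and contains a circuit $C$. Since $B_A$ and $B_B$ are each independent, $C$ is contained in neither, so $C$ has an element in $B_A\subseteq A$ and an element in $B_B\subseteq B$, while $C\subseteq B_A\cup B_B\subseteq A\cup B$; this is the required circuit.

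I do not expect a genuine obstacle here: the argument is routine. The only thing to watch is the bookkeeping forced by disjointness — that $A\cap B=\emptyset$ makes the chosen bases disjoint (so their sizes add) and makes any circuit inside $A\cup B$ that meets both $A$ and $B$ split cleanly between them — together with the standard fact that a dependent spanning set of $A\cup B$ must strictly exceed $r(A\cup B)$ in size.
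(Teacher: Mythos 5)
Your proof is correct and follows the same route as the paper: reduce via Proposition~\ref{mod-flat-characterisation} to the rank identity $r(F)+r(F')=r(F\cup F')$ for disjoint flats, and identify that identity with the absence of a circuit meeting both sets. The only difference is that the paper cites this last equivalence from Oxley (Proposition 4.2.1), whereas you prove it directly with a routine basis-extension argument, which is fine.
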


\begin{proof}
Let $F'$ be an arbitrary flat that is disjoint from $F$.
There is no circuit of $M|(F\cup F')$ that contains elements of both $F$ and $F'$ if and only if $r(F)+r(F') = r(F\cup F')$  \cite{Oxley11}*{Proposition 4.2.1}.
Now the result follows by Proposition \ref{mod-flat-characterisation}.
\end{proof}

The next result combines Proposition 6.9.5 and Corollary 6.9.8 from \cite{Oxley11}.

\begin{proposition}
\label{modular-intersection}
Let $F$ and $F'$ be modular flats of the matroid $M$.
Then  $F\cap F'$ is a modular flat of $M$.
If $F\subseteq X\subseteq E(M)$ then $F$ is a modular flat of $M|X$.
\end{proposition}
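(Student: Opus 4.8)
The plan is to treat the two assertions separately, using the circuit characterisation of modularity from Proposition~\ref{mod-flat-new-def} as the main tool; the statement is in any case \cite{Oxley11}*{Proposition 6.9.5 and Corollary 6.9.8}, so one option is simply to cite it.

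For the restriction assertion I would first check that $F$ is a flat of $M|X$, since its closure there is $\cl(F)\cap X=F$. Then, supposing $F$ is not modular in $M|X$, Proposition~\ref{mod-flat-new-def} gives a flat $F'$ of $M|X$ disjoint from $F$ and a circuit $C$ of $M|X$ with $C\subseteq F\cup F'$ meeting both $F$ and $F'$. I would pass to $G=\cl_{M}(F')$: since $G\cap X$ is the closure of $F'$ in $M|X$, namely $F'$, and $F\subseteq X$, we get $F\cap G=F\cap F'=\emptyset$; moreover $C$ is a circuit of $M$ with $C\subseteq F\cup G$ meeting both $F$ and $G$. This contradicts the modularity of $F$ in $M$, again by Proposition~\ref{mod-flat-new-def}.

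For the intersection assertion, write $K=F_{1}\cap F_{2}$, which is a flat. Supposing $K$ is not modular, Proposition~\ref{mod-flat-new-def} supplies a flat $G$ with $G\cap K=\emptyset$ and a circuit $C\subseteq K\cup G$ meeting both $K$ and $G$; write $C=C_{0}\cup C_{1}$ where $C_{0}=C\cap K$ and $C_{1}=C\cap G$ (both non-empty). Choosing $e\in C_{0}$, I would note that $C_{0}$ is independent (a proper subset of the circuit $C$), so $e\notin Y:=\cl(C_{0}\setminus e)$, whereas the circuit relation gives $e\in\cl((C_{0}\setminus e)\cup C_{1})\subseteq\cl(Y\cup G)$; also $Y\subseteq K$. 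The engine of the argument is the distributive identity $\cl(Y\cup Z)\cap F=\cl(Y\cup(Z\cap F))$, valid whenever $F$ is a modular flat, $Y\subseteq F$, and $Z$ is a flat. Applying it with $F=F_{1}$ and with $F=F_{2}$ (legitimate because $Y\subseteq K\subseteq F_{1}\cap F_{2}$ and $e\in K\subseteq F_{1}\cap F_{2}$) places $e$ in $A:=\cl(Y\cup(G\cap F_{1}))$ and in $B:=\cl(Y\cup(G\cap F_{2}))$. Since $A\subseteq F_{1}$ and $B\subseteq F_{2}$, one more application of the identity (with $F=F_{2}$, $Z=G\cap F_{1}$) gives $A\cap B\subseteq A\cap F_{2}=\cl(Y\cup(G\cap K))=\cl(Y)=Y$, so $e\in Y$, a contradiction.

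The step I expect to be the main obstacle is the distributive identity $\cl(Y\cup Z)\cap F=\cl(Y\cup(Z\cap F))$ for a modular flat $F\supseteq Y$: one inclusion is trivial, but the other genuinely uses modularity and does not fall out of naive rank manipulation with Proposition~\ref{mod-flat-characterisation} (such manipulations turn out circular). It is, however, a standard property of modular elements of the geometric lattice of flats, established in \cite{Oxley11}*{Section 6.9}; once it is granted, the remainder of each part is routine manipulation of closures and circuits.
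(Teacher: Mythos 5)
Your proposal is correct, but note that the paper does not actually prove this proposition at all: it simply cites Oxley (Proposition 6.9.5 and Corollary 6.9.8), so there is no in-paper argument to compare against, and your self-contained proof is the genuinely different route. The restriction half is exactly right: replacing a flat $F'$ of $M|X$ by $\cl_M(F')$ and applying Proposition \ref{mod-flat-new-def} in both $M|X$ and $M$ is all that is needed. In the intersection half the entire weight rests on the distributive identity $\cl(Y\cup Z)\cap F=\cl(Y\cup(Z\cap F))$ for a modular flat $F$, a flat $Y\subseteq F$ and a flat $Z$; this is true (it is the assertion that $(Z,F)$ is a modular pair, which follows from modularity of $F$ together with the symmetry of modular pairs in a geometric lattice), and once it is granted your two applications of it (first with $F_{1}$ and $Z=G$, then with $F_{2}$ and $Z=G\cap F_{1}$) do force $e\in\cl(C_{0}\setminus e)$, a contradiction. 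Two remarks. First, the detour through $B$ is superfluous: since $e\in K\subseteq F_{2}$, the containment $e\in A\cap F_{2}=\cl(Y\cup(G\cap K))=Y$ already finishes the argument. Second, your worry that the identity ``does not fall out of rank manipulation'' is overstated: applying the modularity of $F$ to the two flats $Z$ and $\cl(Y\cup Z)$ gives $r(F\cap\cl(Y\cup Z))=r(F\cap Z)+r(\cl(Y\cup Z))-r(Z)$, while submodularity applied to $\cl(Y\cup(Z\cap F))$ and $Z$ gives $r(\cl(Y\cup(Z\cap F)))\geq r(F\cap Z)+r(\cl(Y\cup Z))-r(Z)$; as $\cl(Y\cup(Z\cap F))$ and $F\cap\cl(Y\cup Z)$ are nested flats, they coincide. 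With those few lines added your proof is fully self-contained, which is more than the paper offers; as written, quoting the identity from the literature is no worse than the paper's wholesale citation of the proposition.
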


\begin{proposition}
\label{short-circuit}
Let $F$ be a modular flat of the matroid $M$ and let $C$ be a circuit of $M$ such that $C\cap F$ is non-empty.
Then $\cl(C-F)\cap F$ is non-empty.
\end{proposition}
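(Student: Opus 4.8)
The plan is to argue by contradiction, feeding the flat $F'=\cl(C-F)$ into the circuit characterisation of modularity in Proposition~\ref{mod-flat-new-def}. First I would dispose of the degenerate situation: if $C\subseteq F$ then $C-F=\emptyset$, so $\cl(C-F)=\cl(\emptyset)$ is the set of loops of $M$, which is contained in every flat and in particular in $F$; there the claim is only about whether $M$ has a loop, so we may assume $C-F\neq\emptyset$ (which is in any case the situation in which the proposition will be applied).

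Now suppose, for a contradiction, that $\cl(C-F)\cap F=\emptyset$, and set $F'=\cl(C-F)$, a flat of $M$ disjoint from $F$. Since $C\cap F\subseteq F$ and $C-F\subseteq\cl(C-F)=F'$, we have $C\subseteq F\cup F'$. Moreover $C$ meets $F$, by hypothesis, and $C$ meets $F'$, since $\emptyset\neq C-F\subseteq F'$. Thus $C$ is a circuit contained in $F\cup F'$ that contains elements of both $F$ and $F'$, while $F'$ is a flat disjoint from the modular flat $F$; this contradicts Proposition~\ref{mod-flat-new-def}. Hence $\cl(C-F)\cap F\neq\emptyset$, as required.

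I do not expect a genuine obstacle here: the entire content is choosing the flat $F'=\cl(C-F)$ and the right formulation of modularity, after which Proposition~\ref{mod-flat-new-def} does the work — closing up $C-F$ cannot absorb any element of $F$ without producing exactly the forbidden kind of circuit. The only point that needs a word of care is the trivial case $C\subseteq F$ addressed above, where $C-F$ is empty and the statement reduces to a remark about loops.
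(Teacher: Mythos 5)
Your proof is correct and is essentially the paper's own argument: take $F'=\cl(C-F)$, note it is a flat disjoint from $F$ containing $C-F$, and contradict Proposition~\ref{mod-flat-new-def}. Your side remark about the degenerate case $C\subseteq F$ is a reasonable extra precaution (the paper silently assumes $C\nsubseteq F$, which is the only situation in which the proposition is invoked), but otherwise the two proofs coincide.
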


\begin{proof}
If $\cl(C-F)\cap F=\emptyset$ then Proposition \ref{mod-flat-new-def} is violated, since $\cl(F-C)$ is a flat that is disjoint from $F$, but $C$ is a circuit that contains elements from both $F$ and $\cl(C-X)$.
\end{proof}

Let $H$ be a modular hyperplane of the matroid $M$, and let $C^{*}$ be the complementary cocircuit.
Let $x$ and $y$ be distinct rank-one flats contained in $C^{*}$.
Then $r(H\cap \cl(x\cup y))=1$, because $H$ is modular.
We say that the rank-one flat $H\cap \cl(x\cup y)$ is the \emph{projection} of $x$ and $y$ onto $H$, and we denote this flat with $P_{H}(x,y)$.
If $x$ and $y$ are elements of $C^{*}$ such that $r(\{x,y\})=2$, then we also use $P_{H}(x,y)$ to stand for $P_{H}(\cl(\{x\}),\cl(\{y\}))$.

\begin{proposition}
\label{mod-hyp-sep-extends}
Let $H$ be a modular hyperplane of the matroid $M$.
Let $X$ be a subset of $E(M)-H$ and let $P$ be the union $\cup P_{H}(x,y)$, where $\{x,y\}$ ranges over all pairs of distinct rank-one flats in $X$.
Let $U$ be a subset of $H$ such that $U$ contains $P$.
Then $\cl(U) = \cl(U\cup X)\cap H$.
\end{proposition}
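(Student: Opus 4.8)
\emph{Proof proposal.}
The plan is to establish the inclusion $\cl(U)\subseteq\cl(U\cup X)\cap H$ directly and then to upgrade it to an equality by a rank count. The first inclusion is immediate: $U\subseteq U\cup X$ gives $\cl(U)\subseteq\cl(U\cup X)$, and $U\subseteq H$ together with the fact that $H$ is a flat gives $\cl(U)\subseteq H$. Since $\cl(U)$ and $\cl(U\cup X)\cap H$ are both flats (the latter being an intersection of flats) with the former contained in the latter, it will suffice to show they have the same rank. If $X=\emptyset$ this is trivial (indeed $\cl(U\cup X)=\cl(U)\subseteq H$), so I will assume $X\ne\emptyset$ and fix an element $x_{0}\in X$.

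The crux is the claim that $X\subseteq\cl(U\cup\{x_{0}\})$. Take $y\in X$. If $y$ is parallel to $x_{0}$ then $y\in\cl(\{x_{0}\})$. Otherwise $r(\{x_{0},y\})=2$, so the rank-one flat $p=P_{H}(x_{0},y)=H\cap\cl(\{x_{0},y\})$ is one of the flats in the union defining $P$, whence $p\subseteq P\subseteq U$. Since $p\subseteq H$ while $\cl(\{x_{0}\})\not\subseteq H$, the points $p$ and $\cl(\{x_{0}\})$ are distinct, so $x_{0}\notin p$, $r(\{x_{0}\}\cup p)=2$, and therefore $\cl(\{x_{0}\}\cup p)=\cl(\{x_{0},y\})$; in particular $y\in\cl(\{x_{0}\}\cup p)\subseteq\cl(U\cup\{x_{0}\})$. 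This proves the claim, so $\cl(U\cup X)=\cl(U\cup\{x_{0}\})$, and since $x_{0}\notin H\supseteq\cl(U)$ we get $r(U\cup X)=r(U)+1$.

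It remains to compute $r(\cl(U\cup X)\cap H)$ using modularity of $H$. Because $X\ne\emptyset$, the flat $F=\cl(U\cup X)$ is not contained in $H$, so $F\cup H$ spans $M$ (as $H$ is a hyperplane). Applying $r(F)+r(H)=r(F\cup H)+r(F\cap H)$ then gives $r(\cl(U\cup X)\cap H)=r(\cl(U\cup X))+r(H)-r(M)=r(U\cup X)-1=r(U)=r(\cl(U))$. Since this matches the rank of $\cl(U)$, the two nested flats coincide, which is the desired equality.

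The only place requiring care is the reduction itself: the tempting route is to try to realise every point of $\cl(U\cup X)\cap H$ as a projection, which is awkward in general, whereas what one actually needs is merely that adjoining all of $X$ to $U$ raises the rank by exactly one — and this follows cleanly once each element of $X$ is drawn into $\cl(U\cup\{x_{0}\})$ by a single projection. Degenerate situations (a one-element $X$, parallel elements of $X$, or loops sitting inside the relevant rank-one flats) only force some of the closures above to coincide and so cause no difficulty.
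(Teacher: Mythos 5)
Your proof is correct, and it takes a genuinely different route from the paper. You reduce everything to a rank computation: the projections pull each $y\in X$ into $\cl(U\cup\{x_{0}\})$ (since $P_{H}(x_{0},y)\subseteq P\subseteq U$ and $\cl(\{x_{0}\}\cup P_{H}(x_{0},y))=\cl(\{x_{0},y\})$), so $\cl(U\cup X)=\cl(U\cup\{x_{0}\})$ has rank $r(U)+1$, and then the modular equality $r(F)+r(H)=r(F\cup H)+r(F\cap H)$ applied to the hyperplane $H$ gives $r(\cl(U\cup X)\cap H)=r(U)$, forcing the two nested flats to coincide; your handling of the degenerate cases ($X=\emptyset$, parallel elements, $x_{0}$ necessarily a non-loop because $x_{0}\notin H$) is fine. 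The paper instead argues by contradiction at the level of circuits: it picks $z\in(\cl(U\cup X)\cap H)-\cl(U)$, chooses a circuit $C\subseteq U\cup X\cup z$ through $z$ with $|C-H|$ minimal, and uses strong circuit elimination against a triangle $\{x,y,p\}$ with $p\in P_{H}(x,y)$ to produce a circuit with smaller $|C-H|$, a contradiction. What your approach buys is brevity and an explicit rank identity ($r(\cl(U\cup X)\cap H)=r(U)$), exploiting modularity of $H$ directly through the rank definition; what the paper's approach buys is uniformity with its surrounding arguments, since the same minimal-circuit-plus-elimination device is reused in Propositions \ref{mod-hyp-connected} and \ref{mod-hyp-vert-sep}, and it never needs to compute ranks, only to know that the projection points exist and lie in $U$. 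Both proofs hinge on the same structural fact, namely that $P\subseteq U$ lets a single element of $X$ (or a single elimination step) account for the rest of $X$.
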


\begin{proof}
Note that $\cl(U)$ is contained in $H$.
Thus it is obvious that $\cl(U)$ is a subset of $\cl(U\cup X)\cap H$.
Let us assume that the containment is proper, and let $z$ be an element that is in $\cl(U\cup X)\cap H$ but not $\cl(U)$.
Thus $z$ is not in $U$.
There is some circuit $C\subseteq U\cup X\cup z$ that contains $z$.
Let us assume that we have chosen $C$ so that $C-H$ is as small as possible.
If $C-H$ is empty, then $C$ certifies that $z$ is in $\cl(U)$, contrary to hypothesis, so $C-H\ne\emptyset$.
If $C-H$ contains a single element $x$, then $C$ certifies that $x$ is in $\cl(H) = H$, which is a contradiction.
Therefore we can choose $x$ and $y$ to be distinct elements of $C-H$.
Let $p$ be an element in $P_{H}(x,y)$.
Thus $p$ is in $P$ and $\{x,y,p\}$ is a circuit.
Note that $z\ne p$, since $z$ is not in $P\subseteq U$.
We perform strong circuit elimination on $C$ and $\{x,y,p\}$ to obtain the circuit $C'\subseteq (C-x)\cup \{p,z\}$ such that $z$ is in $C'$.
Thus $C'$ is a subset of $U\cup X\cup z$, but $C'-H$ is smaller than $C$.
Now our choice of $C$ is contradicted, and this completes the proof.
\end{proof}

\begin{proposition}
\label{mod-hyp-connected}
Let $H$ be a modular hyperplane of the connected matroid $M$.
Then $M|H$ is connected.
\end{proposition}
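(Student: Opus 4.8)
The plan is to argue by contradiction: suppose $M|H$ is disconnected, so there is a partition $H = A \sqcup B$ into non-empty parts such that no circuit of $M|H$ meets both $A$ and $B$. Equivalently, $\cl_{M|H}(A) \cap \cl_{M|H}(B) = \emptyset$ and their union spans $H$; since $H$ is a flat of $M$ we may as well take $A$ and $B$ to be flats of $M$ (replace them by their closures in $M$, which remain inside $H$). Since $M$ itself is connected, there must be a circuit $C$ of $M$ that ``bridges'' the two sides — that is, $C$ meets both $A$ and $B$ — and any such circuit is forced to use an element of $E(M) - H$, because no circuit inside $H$ crosses the partition. Choose such a $C$ with $C - H$ as small as possible.

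The key step is to push $C$ back into $H$ using the modularity of $H$, thereby producing a bridging circuit contained entirely in $H$ and reaching the contradiction. If $C - H$ is empty we already have such a circuit, so $C - H \ne \emptyset$. Since $H$ is a flat, $|C - H| \ge 2$, so pick distinct $x, y \in C - H$. Let $p \in P_H(x,y)$, so $\{x,y,p\}$ is a circuit with $p \in H$. Perform strong circuit elimination on $C$ and $\{x,y,p\}$ to eliminate $x$: this yields a circuit $C' \subseteq (C - x) \cup \{p\}$. Then $C' - H \subseteq (C - H) - x$ is strictly smaller than $C - H$. The remaining point is to check that $C'$ still bridges $A$ and $B$; this is the part requiring care. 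The safest route is to set up the minimal counterexample not just for ``bridging circuit'' but to track which side(s) of the partition $C$ meets after projecting: since adding $p \in H$ to the circuit only adds an element of $A \cup B$, and removing $x \notin H$ removes a non-$H$ element, the intersection $C' \cap (A \cup B) \supseteq (C \cap (A \cup B)) - x = C \cap (A\cup B)$ still meets both $A$ and $B$. So $C'$ is a strictly better bridging circuit, contradicting the choice of $C$.

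The main obstacle is precisely this last verification, because strong circuit elimination only guarantees $p \in C'$, not that both of the ``anchor'' elements witnessing that $C$ meets $A$ and that $C$ meets $B$ survive into $C'$ — and in general $C'$ might shed elements of $C$. The clean fix is: since $x, y \notin H$ while the anchors lie in $A \cup B \subseteq H$, the anchors are distinct from $x$, so removing $x$ cannot destroy them; and $C' \supseteq (C - x) \setminus (\text{something})$ is not quite right, so instead one applies strong circuit elimination demanding $z \in C'$ for a prescribed anchor $z$, exactly as in the proof of Proposition~\ref{mod-hyp-sep-extends}. Running the elimination twice (or invoking it in the form ``there is a circuit in $(C \cup \{x,y,p\}) - x$ through $z$'') for an anchor $z \in C \cap A$ produces $C'$ through $z$ with $C' - H$ smaller; one then checks $C'$ must also meet $B$, since otherwise $C' \subseteq H$ would be a circuit inside $A = \cl(A)$, forcing $z \in \cl(A - z)$ only if $C' \subseteq A$, whereas if $C' \cap B = \emptyset$ and $C' \subseteq H$ then $C'$ is a circuit of $M|H$ avoiding $B$, which is fine — so actually one must be slightly more careful and instead observe that $C - x$ still meets $B$ and $C' \supseteq$ enough of it. In writing this up I would mirror the bookkeeping of Proposition~\ref{mod-hyp-sep-extends} closely, as the structure of that argument is exactly what is needed here.
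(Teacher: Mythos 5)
Your overall strategy is exactly the paper's: take a separation $(A,B)$ of $M|H$, choose a circuit $C$ of $M$ meeting both sides with $|C-H|$ minimal, pick distinct $x,y\in C-H$, take $p\in P_{H}(x,y)$, and eliminate $x$ by strong circuit elimination to contradict minimality. However, the crucial verification that the new circuit $C'$ still meets both $A$ and $B$ is not actually carried out, and your first attempt at it is false: strong circuit elimination gives a circuit $C'\subseteq (C\cup\{p\})-x$ containing one prescribed element, not one containing all of $C-x$, so the claim $C'\cap(A\cup B)\supseteq C\cap(A\cup B)$ does not hold. Your closing paragraph recognises this, but the proposed repair trails off: keeping an anchor $z\in C\cap A$ and then arguing about whether $C'\subseteq H$ or $C'\subseteq A$ does not produce a contradiction (as you note yourself, a circuit of $M|H$ lying inside one side of the separation is perfectly possible), and ``$C'\supseteq$ enough of $C-x$'' is not something circuit elimination provides.

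The missing idea is to exploit which side of the separation $p$ lands on. Since $p\in H=A\cup B$, assume without loss of generality that $p\in A$; then apply strong circuit elimination to $C$ and $\{x,y,p\}$, eliminating $x$ and prescribing an anchor $v\in C\cap B$ (an element of the side \emph{opposite} to $p$). The resulting circuit $C'\subseteq (C-x)\cup\{y,p\}$ contains $v\in B$, and it must also contain $p$, since otherwise $C'$ would be properly contained in the circuit $C$, which is impossible. Hence $C'$ meets both $A$ (through $p$) and $B$ (through $v$), while $C'-H\subseteq (C-H)-x$ is strictly smaller than $C-H$, contradicting the choice of $C$. With this one observation inserted, your argument closes and coincides with the proof in the paper.
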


\begin{proof}
Assume that $M|H$ is not connected, and let $(U,V)$ be a separation of $M|H$.
Because $M$ is connected, there are circuits of $M$ that contain elements from both $U$ and $V$.
Amongst such circuits choose $C$ so that $C-H$ is as small as possible.
Let $u$ be an element in $C\cap U$ and let $v$ be an element from $C\cap V$.
Note that $C-H$ is not empty since $(U,V)$ is a separation of $M|H$.
Furthermore, $C-H$ does not contain a single element, or else that element would be in $\cl(H)=H$.
Therefore we choose distinct elements $x,y\in C-H$.
Let $p$ be an element in $P_{H}(x,y)$, so that $\{x,y,p\}$ is a circuit of $M$.
Because $p$ is in $H$ we can assume without loss of generality that $p$ is in $U$.
We perform strong circuit elimination on $C$ and $\{x,y,p\}$ to obtain a circuit $C'\subseteq (C-x)\cup \{y,p\}$ that contains $v$.
Note that $C'$ contains $p$, or else it is a proper subset of $C$.
Thus $C'$ contains elements from both $U$ and $V$, but $|C'-H|<|C-H|$, and we have a contradiction.
Therefore $M|H$ is connected.
\end{proof}

\subsection{Roundness}

A \emph{proper} flat of a matroid is one that is not equal to the entire ground set.

\begin{definition}
Let $M$ be a matroid.
A \emph{vertical cover} of $M$ is a pair $(F,F')$ of proper flats such that $F\cup F' = E(M)$.
If, in addition, $F$ and $F'$ are modular flats, then $(F,F')$ is a \emph{modular cover}.
A matroid is \emph{round} if it has no vertical cover.
\end{definition}

Thus a matroid is round if and only if there is no partition $(U,U')$ of $E(M)$ such that neither $U$ nor $U'$ is spanning.
Such a partition is said to be a \emph{vertical separation}.
If $X$ is a subset of $E(M)$, then we say that $X$ is round if $M|X$ is round.
If $F$ is a round flat of the matroid $M$ and $F$ is contained in the subset $X\subseteq E(M)$, then clearly $F$ is a round flat of $M|X$.
A round flat is \emph{maximal} if it is not properly contained in a round flat.
For brevity, we refer to a maximal round flat as a \emph{rotunda}.
The set of rotunda of a matroid $M$ is denoted by $\mathcal{R}(M)$.

\begin{proposition}
\label{round-separations}
Let $R$ and $R'$ be distinct rotunda.
Let $(F,F')$ be a vertical cover such that $R\subseteq F$ and $R'\subseteq F'$ and $F\cap F' = R\cap R'$.
Then $R\nsubseteq F'$ and $R'\nsubseteq F$.
\end{proposition}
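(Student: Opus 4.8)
The plan is a short proof by contradiction that uses nothing beyond the definitions of a vertical cover and of a rotunda as a maximal round flat. Suppose, for a contradiction, that $R\subseteq F'$. Combined with the hypothesis $R\subseteq F$, this gives $R\subseteq F\cap F'$, and since $F\cap F'=R\cap R'$ we deduce $R\subseteq R\cap R'\subseteq R'$.

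Now $R'$ is a rotunda, so in particular it is a round flat, while $R$ is a \emph{maximal} round flat. As $R$ is contained in the round flat $R'$, maximality of $R$ forces $R=R'$, contradicting the hypothesis that $R$ and $R'$ are distinct. Hence $R\nsubseteq F'$. The statement $R'\nsubseteq F$ follows by the symmetric argument: if $R'\subseteq F$, then $R'\subseteq F\cap F'=R\cap R'\subseteq R$, and maximality of the round flat $R'$ inside the round flat $R$ again yields $R=R'$. There is no real obstacle here; the only point requiring a moment's care is to apply maximality to whichever of the two rotunda is being shown to sit inside the other, so that the two halves of the statement invoke the maximality of $R$ and of $R'$ respectively.
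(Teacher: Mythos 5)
Your proof is correct and is essentially the same argument as the paper's: assume $R\subseteq F'$, use $F\cap F'=R\cap R'$ to conclude $R\subseteq R'$, and then invoke the fact that distinct rotunda cannot contain one another, with the symmetric argument for $R'\nsubseteq F$. No issues.
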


\begin{proof}
It suffices to prove that $R$ is not contained in $F'$.
Assume this fails.
Then $R$ is contained in $F\cap F' = R\cap R'$, implying that $R$ is a subset of $R'$.
This is impossible since $R$ and $R'$ are distinct rotunda.
\end{proof}

The next result follows from work in \cite{Whittle85}, but we include a proof for completeness.

\begin{proposition}
\label{round-projections}
Let $H$ be a modular hyperplane of the matroid $M$.
Let $X$ be a subset of the cocircuit $E(M)-H$.
Then
\[\{P_{H}(x,y)\colon x,y \in X, r(\{x,y\}) = 2\}\]
is round.
\end{proposition}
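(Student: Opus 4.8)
The plan is to argue by contradiction. Write $P=\bigcup\{P_H(x,y):x,y\in X,\ r(\{x,y\})=2\}$ and suppose $M|P$ has a vertical separation $(U,V)$. Since such a separation exists, $P\ne\emptyset$, so $X$ meets at least two parallel classes of $M$; let $k\ge 2$ be the number of parallel classes met by $X$, and write $\widehat e$ for the parallel class of an element $e$. Set $F_U=\cl(U)$ and $F_V=\cl(V)$; since $U,V\subseteq P\subseteq H$ and $H$ is a flat, both are flats contained in $H$, and since $(U,V)$ is a vertical separation we have $P\nsubseteq F_U$ and $P\nsubseteq F_V$. The goal is to contradict the latter.

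The geometric heart of the argument is a collinearity fact: for pairwise non-parallel $x,y,z\in X$, the three rank-one flats $P_H(x,y)$, $P_H(y,z)$, $P_H(x,z)$ all lie inside $H\cap\cl(\{x,y,z\})$, which has rank at most two. Indeed this flat has rank one when $\{x,y,z\}$ is dependent, while if $\{x,y,z\}$ is independent then modularity of $H$, together with $\cl(H\cup\{x\})=E(M)$, gives $r(H\cap\cl(\{x,y,z\}))=r(H)+3-r(M)=2$. A short computation shows the three projections are pairwise distinct in the independent case (a coincidence among them would force a point of $H$ into the parallel class of one of $x,y,z$, which meets $H$ only in loops). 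Either way, $P_H(x,z)\subseteq\cl(P_H(x,y)\cup P_H(y,z))$.

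Now I would translate into a colouring problem. Let $K$ be the complete graph on the $k$ parallel classes met by $X$, and colour the edge $\{\widehat x,\widehat y\}$ with $U$ if $P_H(x,y)\subseteq F_U$ and with $V$ if $P_H(x,y)\subseteq F_V$ (this is well defined since $P_H(x,y)$ depends only on $\cl(\widehat x\cup\widehat y)$). Every edge gets at least one colour: $P_H(x,y)$ is a rank-one flat contained in $P=U\cup V$, so some non-loop element of it lies in $U$ or in $V$, and then the whole class lies in $F_U$ or in $F_V$. By the collinearity fact, each colour class is transitively closed, so the $U$-edges and the $V$-edges each form a disjoint union of cliques. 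I then invoke the elementary fact that if the edge set of a complete graph on at least two vertices is covered by two such cluster graphs, one of them is the whole edge set. (Otherwise the two associated partitions each have at least two blocks; choose $a,b$ in distinct blocks of the $U$-partition, so $\{a,b\}$ is $V$-coloured and $a,b$ share a block of the $V$-partition; then choose $c$ in a different $V$-block from $a$, hence also from $b$, so $\{a,c\}$ and $\{b,c\}$ are both $U$-coloured, and transitivity forces $\{a,b\}$ to be $U$-coloured --- a contradiction.) Hence some colour, say $U$, is carried by every edge of $K$, so $P=\bigcup_{x,y}P_H(x,y)\subseteq F_U$, contradicting $P\nsubseteq F_U$. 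Therefore $M|P$ has no vertical separation, i.e.\ $P$ is round.

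I expect the collinearity/transitivity step to be the main obstacle: it is the only place where modularity of $H$ does real work, and it is what makes the colour classes behave. Everything after it is pure bookkeeping. Some routine care is also needed with degenerate configurations (elements of $X$ sharing a parallel class, triples of rank two) and with loops of $M$, but these only shrink the flats in play and affect neither rank nor roundness.
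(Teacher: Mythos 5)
Your proof is correct and follows essentially the same route as the paper's: a contradiction via a vertical separation, the modularity computation showing the three projections of a triple lie in the rank-two flat $H\cap\cl(\{x,y,z\})$ (giving transitivity), and a two-colouring argument on the complete graph of rank-one flats (parallel classes) met by $X$. The only differences are cosmetic — you pass to closures $\cl(U),\cl(V)$ in $M$ rather than working with flats of $M|P$, and you finish with the "two cluster graphs covering a complete graph" lemma instead of the paper's direct vertex-by-vertex argument.
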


\begin{proof}
Let $P$ be the union of all projections onto $H$ of pairs of distinct, non-parallel, elements in $X$.
Thus our aim is to show that $P$ is round.
We assume for a contradiction that $(F,F')$ is a vertical cover of $M|P$, so that $F$ and $F'$ are proper flats of $M|P$ and $F\cup F' = P$.
Note that if $X$ contains fewer than three rank-one flats, then $P$ is either empty or consists of a single rank-one flat.
In this case $P$ is trivially round, so we must assume that $X$ contains at least three rank-one flats.

Let $x$, $y$, and $z$ be distinct rank-one flats in $X$.
Assume that $P_{H}(x,y)$ and $P_{H}(x,z)$ are both in $F$.
We claim that $P_{H}(y,z)$ is also in $F$.
If $z$ is in $\cl(x\cup y)$, then $\cl(x\cup y)=\cl(x\cup z) = \cl(y\cup z)$, and it follows that $P_{H}(x,y)=P_{H}(x,z)=P_{H}(y,z)$, so the claim is true.
Therefore we will assume that $r(x\cup y\cup z) = 3$.
Let $Z$ be $\cl(x\cup y\cup z)$.
Since $H$ is a modular hyperplane and $Z$ is not contained in $H$, it follows that $r(H\cap Z) = 2$.
Now $P_{H}(x,y)$ and $P_{H}(x,z)$ are rank-one flats contained in $H\cap Z$.
If they are not distinct, then $y$ and $z$ are both in the closure of $x\cup P_{H}(x,y)$.
This implies that $z$ is in $\cl(x\cup y)$, contrary to earlier hypothesis.
It follows that $P_{H}(x,y)\cup P_{H}(x,z)$ spans $H\cap Z$, and in particular spans $P_{H}(y,z)$.
Thus $P_{H}(y,z)$ is in $F$, as claimed.
Symmetrically, if $P_{H}(x,y)$ and $P_{H}(x,z)$ are both in $F'$, then so is $P_{H}(y,z)$.

We think of the rank-one flats that have a non-empty intersection with $X$ as the vertices of a complete graph.
If $x$ and $y$ are two such flats, then we colour the edge between $x$ and $y$ red if $P_{H}(x,y)$ is in $F$, and blue if it is in $F'$.
Notice that an edge may be both red and blue.
The previous paragraph shows that if the edges $xy$ and $xz$ are both red (blue), then the edge $yz$ is also red (blue).

Let $x$ be a vertex in this complete graph and assume that every edge incident with $x$ is red.
Then every edge is red, and it follows that $P$ is contained in $F$.
This is impossible since $F$ is a proper flat of $M|P$.
Similarly, it is not possible for every edge incident with $x$ to be blue.

Therefore we can assume that the edge between $x$ and $y$ is red but not blue, and the edge between $x$ and $z$ is blue but not red.
However, if the edge $yz$ is red, then $xz$ is red, and if $yz$ is blue then $xy$ is blue.
In either case we have a contradiction, so the proof is complete.
\end{proof}

\begin{proposition}
\label{mod-hyp-vert-sep}
Let $H$ be a modular hyperplane of the matroid $M$ and let $C^{*}$ be the complementary cocircuit.
Let $(F_{1},F_{2})$ be a vertical cover of $M|H$.
Let $P$ be the union $\cup P_{H}(x,y)$, where $x$ and $y$ range over all distinct rank-one flats contained in $C^{*}$.
Then $P$ is contained in $F_{i}$ for some $i$, and $(F_{i}\cup C^{*},F_{3-i})$ is a vertical cover of $M$.
Moreover, if $(F_{1},F_{2})$ is a modular cover, then so is $(F_{i}\cup C^{*},F_{3-i})$.
\end{proposition}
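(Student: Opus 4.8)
The plan is to exploit Proposition~\ref{round-projections} to see that $P$ is a round set, and then use the structure of a vertical cover of $M|H$ restricted to $P$. First I would note that $P$ is exactly the union of projections $P_H(x,y)$ over all distinct rank-one flats contained in the cocircuit $C^*=E(M)-H$, so by Proposition~\ref{round-projections} the set $P$ is round. Since $(F_1,F_2)$ is a vertical cover of $M|H$ and $P\subseteq H$, the pair $(F_1\cap\cl(P),F_2\cap\cl(P))$ — or more carefully, the traces $F_1\cap P$ and $F_2\cap P$ — cover $P$; roundness of $P$ then forces one of these traces to span $\cl(P)$, say $\cl(P)\subseteq F_1$, and hence $P\subseteq F_1$. (One has to be slightly careful that $F_i\cap\cl(P)$ is a flat of $M|\cl(P)$ and that covering $P$ implies covering $\cl(P)$; this is routine since $F_i$ is a flat.) Relabelling, we may assume $i=1$, i.e.\ $P\subseteq F_1$.

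Next I would show $(F_1\cup C^*,F_2)$ is a vertical cover of $M$. The union is all of $E(M)$ since $F_1\cup F_2=H$ and $C^*=E(M)-H$. We need $\cl(F_1\cup C^*)$ and $F_2$ to be proper flats, and that $F_1\cup C^*$ is itself a flat. For $F_2$: it is already a flat of $M|H$, hence a flat of $M$ (since $H$ is a flat), and it is proper in $M|H$, so it omits some element of $H$, hence is proper in $M$. For $F_1\cup C^*$: since $P\subseteq F_1$ and $F_1$ is a flat containing all projections of pairs from $C^*$, Proposition~\ref{mod-hyp-sep-extends} applies with $U=F_1$ and $X=C^*$, giving $\cl(F_1)=\cl(F_1\cup C^*)\cap H$, i.e.\ $F_1=\cl(F_1\cup C^*)\cap H$. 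In particular $\cl(F_1\cup C^*)$ contains no element of $H$ outside $F_1$, and since $F_1$ is proper in $M|H$ there is an element of $H$ not in $F_1$, so $\cl(F_1\cup C^*)\ne E(M)$; moreover $\cl(F_1\cup C^*)=F_1\cup C^*$ because adding $C^*$ to $F_1$ already reaches rank $r(M)$ and the closure picks up nothing new in $H$. Thus $(F_1\cup C^*,F_2)$ is a vertical cover of $M$.

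Finally, for the "moreover" I would assume $(F_1,F_2)$ is a modular cover of $M|H$, so $F_1,F_2$ are modular flats of $M|H$, and show $F_1\cup C^*$ and $F_2$ are modular flats of $M$. For $F_2$: since $H$ is a modular hyperplane of $M$ and $F_2$ is a modular flat of $M|H$, modularity of $F_2$ in $M$ follows by transitivity of modularity through the modular flat $H$ (a modular flat of a modular flat is modular in the whole matroid — this is standard, e.g.\ from the correspondence between modular flats and the lattice structure; alternatively one verifies the rank identity directly using that $H$ meets every line and $F_2$ is modular in $M|H$). For $F_1\cup C^*$: I would use Proposition~\ref{mod-flat-new-def}. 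Let $G$ be a flat of $M$ disjoint from $F_1\cup C^*$; then $G\subseteq H$ and $G$ is disjoint from $F_1$. Suppose there is a circuit $C\subseteq(F_1\cup C^*)\cup G$ meeting both $F_1\cup C^*$ and $G$. If $C\cap C^*=\emptyset$ then $C\subseteq F_1\cup G$ with elements in both $F_1$ and $G$, contradicting modularity of $F_1$ in $M|H$. If $C\cap C^*\ne\emptyset$, I would use Proposition~\ref{short-circuit} with the modular hyperplane $H$: since $C$ meets $C^*=E(M)-H$... rather, I would instead run a strong-circuit-elimination argument exactly parallel to the proof of Proposition~\ref{mod-hyp-sep-extends}, replacing elements of $C^*$ in $C$ two at a time by their projection (which lies in $P\subseteq F_1$) until no elements of $C^*$ remain, producing a circuit inside $F_1\cup G$ still meeting $G$ and still meeting $F_1$ (the projection elements land in $F_1$), again contradicting modularity of $F_1$ in $M|H$. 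Hence no such circuit exists and $F_1\cup C^*$ is modular in $M$.

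The main obstacle I anticipate is the last case: pushing a circuit that uses several elements of $C^*$ down into $H$ by repeated projection while keeping track that it still witnesses a violation of modularity of $F_1$ in $M|H$ (in particular that it retains an element of $G$ and an element of $F_1$, and does not collapse to a proper sub-circuit). This is the same bookkeeping as in Proposition~\ref{mod-hyp-sep-extends}, so it should go through, but it requires care with the strong circuit elimination and with the possibility that a projected element coincides with an element already in $C$. The easier route may be to phrase this directly via Proposition~\ref{mod-flat-characterisation} and rank counting, using $r(F_1\cup C^*)=r(F_1)+r(C^*\text{ relative to }H)=r(F_1)+(r(M)-r(H))$, but the circuit formulation seems cleanest given the tools already developed.
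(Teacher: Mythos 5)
Your argument is correct and is essentially the paper's own proof: roundness of $P$ (Proposition \ref{round-projections}) forces $P\subseteq F_{i}$, Proposition \ref{mod-hyp-sep-extends} shows $F_{i}\cup C^{*}$ is a proper flat, $F_{3-i}$ is modular in $M$ because a modular flat of the modular flat $H$ is modular in $M$, and modularity of $F_{i}\cup C^{*}$ is verified via Proposition \ref{mod-flat-new-def} by strong circuit elimination with projections, which the paper organises as choosing a violating circuit $C$ with $|C\cap C^{*}|$ minimal. Two small points to tidy: the aside that $F_{1}\cup C^{*}$ ``reaches rank $r(M)$'' is false and unneeded (flatness already follows from $\cl(F_{1}\cup C^{*})\cap H=F_{1}$ together with $E(M)-H=C^{*}$), and your iterative projection must also dispose of the case where exactly one element of $C^{*}$ remains in the circuit, which is impossible since that element would then lie in $\cl(H)=H$.
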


\begin{proof}
Proposition \ref{round-projections} says that $P$ is a round subset of $H$.
Thus $(F_{1}\cap P, F_{2}\cap P)$ is not a vertical cover of $M|P$, so either $F_{1}\cap P$ or $F_{2}\cap P$ is equal to $P$. 
We assume the former without any loss of generality, so $P\subseteq F_{1}$.
Proposition \ref{mod-hyp-sep-extends} implies that $F_{1}$ is equal to $\cl(F_{1}\cup C^{*})\cap H$.
It follows that $\cl(F_{1}\cup C^{*}) = F_{1}\cup C^{*}$.
Now $F_{1}\cup C^{*}$ is a proper flat of $M$ because $F_{1}$ is a proper flat of $M|H$.
Similarly, $F_{2}$ is a proper flat of $M$.
As $(F_{1}\cup C^{*})\cup F_{2}=E(M)$, it follows that  $(F_{1}\cup C^{*},F_{2})$ is a vertical cover of $M$.

Now we assume that $(F_{1},F_{2})$ is a modular cover of $M|H$.
Then $F_{2}$ is a modular flat of $M|H$ so it immediately follows from \cite{Oxley11}*{Proposition 6.9.7} that $F_{2}$ is also a modular flat in $M$.
It remains only to prove that $F_{1}\cup C^{*}$ is a modular flat of $M$.
To this end, assume that $F$ is a flat of $M$ that is disjoint from $F_{1}\cup C^{*}$.
Thus $F$ is a flat of $M|(F_{2}-F_{1})$.
If we can show that there is no circuit of $M|((F_{1}\cup C^{*})\cup F)$ containing elements from both $F$ and $F_{1}\cup C^{*}$, then the result will follow from Proposition \ref{mod-flat-new-def}.
Assume that $C$ is such a circuit, chosen so that $C\cap C^{*}$ is as small as possible.
Let $f$ be an element of $C\cap F$.
If $C\cap C^{*}=\emptyset$, then Proposition \ref{mod-flat-new-def} implies that $F_{1}$ is not a modular flat of $M|H$, which is a contradiction.
Therefore $C\cap C^{*}\ne \emptyset$.
If $C\cap C^{*}$ contains a single element, $x$, then $C$ certifies that $x$ is in $\cl(H) = H$, a contradiction.
Therefore we let $x$ and $y$ be distinct elements in $C\cap C^{*}$.
Let $p$ be in $P_{H}(x,y)$.
Thus $\{x,y,p\}$ is a circuit and $p$ is in $P$, and hence in $F_{1}$.
We perform strong circuit elimination on $C$ and $\{x,y,p\}$ to obtain $C'\subseteq (C-x)\cup\{y,p\}$, a circuit that contains $f$.
It must contain $p$, since otherwise it is properly contained in $C$.
But now $C'$ is contained in $F_{1}\cup C^{*}\cup F$, and it contains elements from both $F$ and $F_{1}\cup C^{*}$.
Since $C'\cap C^{*}$ is strictly smaller than $C\cap C^{*}$, we have contradicted our choice of $C$, so the proof is complete.
\end{proof}

The following result provides a partial converse to Proposition \ref{mod-hyp-vert-sep}.

\begin{proposition}
\label{prop3}
Let $H$ be a modular hyperplane of the matroid $M$ and let $C^{*}$ be the complementary cocircuit.
Let $(F,F')$ be a modular cover of $M$ such that $F'$ is contained in $H$.
If $F'\ne H$, then $(F\cap H,F'\cap H)$ is a modular cover of $M|H$.
\end{proposition}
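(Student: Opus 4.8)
The plan is to verify directly that $(F\cap H,\,F'\cap H)$ meets the definition of a modular cover of $M|H$: namely that $F\cap H$ and $F'\cap H$ are both \emph{proper} modular flats of $M|H$, and that their union is the ground set $H$ of $M|H$. Notably, the complementary cocircuit $C^{*}$ should play no role in the argument; all the substantive work has already been done in Proposition \ref{modular-intersection}.

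First I would handle $F'$. Since $F'\subseteq H$ we have $F'\cap H=F'$, and as $F'$ is a modular flat of $M$ with $F'\subseteq H\subseteq E(M)$, the second part of Proposition \ref{modular-intersection} shows $F'$ is a modular flat of $M|H$. Because $F'\ne H$ by hypothesis, $F'\cap H=F'$ is a proper flat of $M|H$.

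Next I would handle $F\cap H$. The hyperplane $H$ is a modular flat of $M$, so by the first part of Proposition \ref{modular-intersection} the intersection $F\cap H$ is a modular flat of $M$; and since $F\cap H\subseteq H$, the second part of the same proposition shows $F\cap H$ is a modular flat of $M|H$. For the union, distributivity of intersection over union gives
\[(F\cap H)\cup(F'\cap H)=(F\cup F')\cap H=E(M)\cap H=H.\]

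The only remaining point — and the only place the special position of $F'$ and the properness of $F$ enter — is that $F\cap H$ is a proper flat of $M|H$. If instead $F\cap H=H$, then $H\subseteq F$; combined with $F'\subseteq H$ this would force $E(M)=F\cup F'=F$, contradicting that $F$ is a proper flat of $M$. Hence $F\cap H\ne H$, completing the verification. I do not anticipate any genuine obstacle: the argument is a direct bookkeeping check against the definition, leaning entirely on Proposition \ref{modular-intersection} and one set identity.
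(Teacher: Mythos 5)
Your proof is correct and follows essentially the same route as the paper: both arguments verify the definition directly, using Proposition \ref{modular-intersection} for modularity of $F\cap H$ and $F'$ in $M|H$, with only a trivial variation in how properness of $F\cap H$ is derived (you use $F'\subseteq H\subseteq F$ forcing $F=E(M)$, while the paper notes $C^{*}\subseteq F$ and $H\cup C^{*}=E(M)$).
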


\begin{proof}
Note that $C^{*}$ is contained in $F$ because $F'$ contains no element of $C^{*}$.
Let $P$ be the union of $P_{H}(x,y)$ where $x$ and $y$ range over distinct rank-one flats in $C^{*}$.
Since $F$ contains $C^{*}$, and $P$ is spanned by $C^{*}$ it follows that $P$ is a subset of $F$.
Now $F\cap H$ is the intersection of two modular flats, so Proposition \ref{modular-intersection} implies that it is a modular flat of $M$, and hence of $M|H$.
Because $F'$ is contained in $H$ it is also true that $F'\cap H = F'$ is a modular flat of $M|H$.
By hypothesis $F'\cap H$ is a proper flat of $M|H$.
Furthermore, $F\cap H$ is a proper flat of $M|H$, or else $F$ contains $H\cup C^{*}=E(M)$, contradicting the fact that $F$ is a proper flat of $M$.
Therefore $(F\cap H, F'\cap H)$ is a modular cover of $M|H$.
\end{proof}

\begin{proposition}
\label{prop2}
Let $H$ be a modular hyperplane of the matroid $M$ and let $C^{*}$ be the complementary cocircuit.
If $F$ is a round flat not contained in $H$, then $F\subseteq \cl(C^{*})$.
\end{proposition}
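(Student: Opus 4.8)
The plan is to argue directly from the definition of roundness, deriving a contradiction if the conclusion fails. The crucial elementary observation is that, since $C^{*}=E(M)-H$, we have $H\cup C^{*}=E(M)$; hence inside $F$ the two sets $F\cap H$ and $F\cap C^{*}$ together cover $F$. This suggests that, whenever $F\nsubseteq\cl(C^{*})$, one can manufacture a vertical cover of $M|F$ out of these two pieces.

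Concretely, I would set $F_{1}=F\cap H$ and $F_{2}=\cl_{M|F}(F\cap C^{*})$, the closure in $M|F$ of the elements of $F$ lying outside $H$. First, $F_{1}$ is a flat of $M|F$ because $H$ is a flat of $M$: the closure of $F\cap H$ in $M|F$ is $\cl_{M}(F\cap H)\cap F$, which is contained in $\cl_{M}(H)\cap F=H\cap F$, so it equals $F\cap H$. Moreover $F_{1}$ is a \emph{proper} flat of $M|F$ precisely because $F$ is not contained in $H$. Second, $F_{2}$ is a flat of $M|F$ by construction, and here there are two cases. If $F_{2}=F$, then $F\subseteq\cl_{M}(F\cap C^{*})\subseteq\cl_{M}(C^{*})$ and we are done. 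If instead $F_{2}\ne F$, then $(F_{1},F_{2})$ is a pair of proper flats of $M|F$ with
\[
F_{1}\cup F_{2}\supseteq(F\cap H)\cup(F\cap C^{*})=F\cap(H\cup C^{*})=F,
\]
so $(F_{1},F_{2})$ is a vertical cover of $M|F$, contradicting the assumption that $F$ is round.

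I do not anticipate a serious obstacle; the only steps requiring attention are the routine check that $F\cap H$ is genuinely a flat of the restriction $M|F$ and the set identity $H\cup C^{*}=E(M)$, which is just the definition of the complementary cocircuit. Two remarks worth recording: modularity of $H$ is never actually invoked in this argument (it is merely the ambient hypothesis under which the proposition will be used), and the low-rank cases need no separate handling, since when $r(F)=1$ the unique element of $F$ outside $H$ already spans $F$, placing us in the first case above.
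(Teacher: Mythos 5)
Your proof is correct and follows essentially the same route as the paper: both split $F$ into $F\cap H$ and $F\cap C^{*}$ and derive a contradiction with roundness of $F$ when neither piece spans, the only cosmetic difference being that you close the second piece in $M|F$ to invoke the vertical-cover definition while the paper uses the equivalent vertical-separation (non-spanning) criterion. Your remark that modularity of $H$ is never used also matches the paper's argument, which only needs $\cl(H)=H$.
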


\begin{proof}
Assume this fails.
Then $F\cap C^{*}$ does not span $F$.
It is also true that $F\cap H$ does not span $F$, as $\cl(F\cap H)\subseteq \cl(H) = H$ and $F$ is not contained in $H$.
Therefore $(F\cap H, F\cap C^{*})$ is a vertical cover of $M|F$, and this contradicts the fact that $M|F$ is round.
\end{proof}

\begin{proposition}
\label{cocircuit-closure-rotunda}
Let $H$ be a modular hyperplane of the matroid $M$ and let $C^{*}$ be the complementary cocircuit.
Then $\cl(C^{*})$ is a rotunda.
Furthermore, every other rotunda of $M$ is contained in $H$.
\end{proposition}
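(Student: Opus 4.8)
The plan is to establish the two assertions in turn: first that $\cl(C^{*})$ is round (and hence, by a maximality argument, a rotunda), and second that no rotunda other than $\cl(C^{*})$ can escape $H$. For the first assertion, observe that $\cl(C^{*})$ is exactly the union $\cup P_{H}(x,y)$ over distinct rank-one flats $x,y$ of $C^{*}$, together with the elements of $C^{*}$ itself. Actually, the cleaner route is this: apply Proposition \ref{round-projections} with $X = C^{*}$ to conclude that $P := \cup\{P_{H}(x,y) : x,y\in C^{*},\ r(\{x,y\})=2\}$ is round. Then note $\cl(C^{*}) = \cl(C^{*}\cup P)$, since $P\subseteq \cl(C^{*})$ by construction. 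So it suffices to show that adding the single rank-one flats of $C^{*}$ back to the round set $P$ keeps it round. I would argue directly: suppose $(A,A')$ were a vertical cover of $M|\cl(C^{*})$. Since $M|P$ is round, $P$ lies in one of $A$, $A'$, say $P\subseteq A$. Now if some element $x\in C^{*}$ lies outside $A$, then $x\in A'$; pick any $y\in C^{*}$ not parallel to $x$ — then $P_{H}(x,y)\in P\subseteq A$, and $\{x,y\}$ together with $P_{H}(x,y)$ is spanned, so $y\in\cl(A\cup\{x\})$... this needs care, but the upshot is that $A$ being a flat containing $P$ and $A'$ being a flat, the pair cannot separate the spanning set $C^{*}$ from $P$. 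I expect the honest version is: since $r(P\cup\{x\}) $ for any $x\in C^{*}$ is at most $r(P)+1$, and in fact once two non-parallel elements of $C^{*}$ are on the same side, all of them are (same red/blue argument as in Proposition \ref{round-projections}); I would just re-run that colouring argument with the two parts of the putative vertical cover.

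Once $\cl(C^{*})$ is known to be round, the rotunda claim follows from Proposition \ref{prop2}: any round flat $F$ of $M$ is either contained in $H$ or contained in $\cl(C^{*})$. In particular, any round flat properly containing $\cl(C^{*})$ would have to be contained in $\cl(C^{*})$ (it is not contained in $H$, since $\cl(C^{*})\not\subseteq H$ as $C^{*}$ is disjoint from $H$), which is absurd. Hence $\cl(C^{*})$ is not properly contained in any round flat, i.e.\ it is a rotunda.

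For the "furthermore" part, let $R$ be any rotunda of $M$ with $R\ne\cl(C^{*})$. By Proposition \ref{prop2}, either $R\subseteq H$ or $R\subseteq\cl(C^{*})$. In the latter case, since $\cl(C^{*})$ is round and $R$ is a maximal round flat, we get $R=\cl(C^{*})$, a contradiction. Therefore $R\subseteq H$, as required.

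The main obstacle I anticipate is the bookkeeping in the first step — proving that $\cl(C^{*})$ itself (not merely the set of projections $P$) is round. Proposition \ref{round-projections} hands us roundness of $P$ for free, but $P$ omits the elements of $C^{*}$ themselves, and one must argue that reinserting them does not create a vertical separation. The cleanest argument is probably to observe that $C^{*}$ spans $\cl(C^{*})$, so in any vertical cover $(A,A')$ of $M|\cl(C^{*})$ one of $A,A'$ already fails to span; then the red/blue edge-colouring argument from the proof of Proposition \ref{round-projections}, applied verbatim to the flats $A$ and $A'$, forces all projections onto one side and then forces all of $C^{*}$ onto that side, contradicting properness. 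Alternatively, one might be able to invoke \cite{Whittle85} directly, but since the excerpt proves Proposition \ref{round-projections} from scratch, imitating that proof is the self-contained choice.
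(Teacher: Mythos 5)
Your proposal is correct and is essentially the paper's argument: roundness of $\cl(C^{*})$ is obtained from Proposition \ref{round-projections} by placing the projection set $P$ inside one part of a putative vertical cover and then using the triangles $\{x,y,p\}$ to force all of $C^{*}$ into a single part, contradicting properness, and the ``furthermore'' statement is deduced from Proposition \ref{prop2} exactly as the paper does. The only divergence is the maximality step, where the paper simply observes that $(Z\cap H,\,\cl(C^{*}))$ is a vertical cover of any flat $Z$ properly containing $\cl(C^{*})$, whereas you reuse Proposition \ref{prop2}; both are valid and equally short. One caution: your opening remark that $\cl(C^{*})$ equals $C^{*}\cup P$ is false in general (the projections need not exhaust $\cl(C^{*})\cap H$; one only has $\cl(P)=\cl(C^{*})\cap H$), but you abandon that claim immediately and nothing in your argument depends on it.
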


\begin{proof}
Let $R$ be $\cl(C^{*})$.
Assume that $R$ is not round, and let $(F,F')$ be a vertical cover of $M|R$.
Let $P$ be the union $\cup P_{H}(x,y)$, where $x$ and $y$ range over all distinct rank-one flats contained in $C^{*}$.
Note that $P$ is contained in $R\cap H$.
Proposition \ref{round-projections} says that $P$ is round.
It follows that one of $F\cap P$ or $F'\cap P$ is equal to $P$.
Without loss of generality we will assume the former.

If $F'$ contains $C^{*}$, then it contains $R$, which is impossible as $(F,F')$ is a vertical cover of $R$.
Therefore we choose $x\in C^{*}-F'$.
The same argument shows we can choose $y\in C^{*}-F$.
Note that $x$ and $y$ are not parallel, since $x$ is in $F-F'$ and $y$ is in $F'-F$.
Let $p$ be in $P_{H}(x,y)$, so that $p$ is in $P$, and hence in $F$.
As $\{x,y,p\}$ is a circuit and both $x$ and $p$ belong to the flat $F$ it follows that $y$ is in $F$, contrary to assumption.
Therefore $R$ is round.

Let $Z$ be any flat that properly contains $R$.
Note that $Z\cap H$ is a flat that does not contain any element of $C^{*}$.
Therefore $(Z\cap H,R)$ is a vertical cover of $Z$, a contradiction.
This shows that $R$ is a maximal round flat, which is to say, a rotunda.

Finally, let $Z$ be a rotunda that is not contained in $H$.
By Proposition \ref{prop2}, we see that $Z$ is contained in $R$.
As $Z$ and $R$ are both rotunda it now follows that $Z=R$.
\end{proof}

\begin{proposition}
\label{mod-hyp-round-intersect}
Let $H$ be a modular hyperplane of the matroid $M$.
Let $C^{*}$ be the complementary cocircuit.
Then $\cl(C^{*})\cap H$ is round.
\end{proposition}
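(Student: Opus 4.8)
The plan is to express $\cl(C^{*})\cap H$ as the closure of a set that is already known to be round, and then verify that the closure of a round set is again round. Write $R=\cl(C^{*})$ and let $P$ denote the union $\cup P_{H}(x,y)$ as $x$ and $y$ range over all pairs of distinct rank-one flats contained in $C^{*}$. Proposition \ref{round-projections} gives at once that $P$ is round. First I would pin down $\cl(P)$: every flat $P_{H}(x,y)$ is contained in $\cl(\{x,y\})\subseteq\cl(C^{*})$ and also in $H$, so $P\subseteq R\cap H$; in particular $\cl(P\cup C^{*})=\cl(C^{*})=R$. Applying Proposition \ref{mod-hyp-sep-extends} with $X=C^{*}$ (so that $C^{*}=E(M)-H$) and $U=P$ yields $\cl(P)=\cl(P\cup C^{*})\cap H=R\cap H$. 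Thus the task reduces to showing that $R\cap H=\cl(P)$ is round.

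For that I would argue by contradiction in the style of the proof of Proposition \ref{cocircuit-closure-rotunda}. Suppose $(F,F')$ is a vertical cover of $M|(R\cap H)$. Since $R\cap H$ is an intersection of flats of $M$, it is a flat of $M$, and hence $F$ and $F'$ are flats of $M$ properly contained in $R\cap H$. As $P\subseteq R\cap H=F\cup F'$ and $P$ is round, the pair $(F\cap P,F'\cap P)$ of flats of $M|P$, whose union is $P$, cannot be a vertical cover of $M|P$; so one member equals $P$, say $P\subseteq F$ without loss of generality. But $F$ is a flat of $M$, so $R\cap H=\cl(P)\subseteq\cl(F)=F$, contradicting that $F$ is a proper flat of $M|(R\cap H)$. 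Hence $R\cap H$ is round, as required.

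I do not anticipate a genuine obstacle: the real content sits inside Propositions \ref{round-projections} and \ref{mod-hyp-sep-extends}. The one delicate point is the bookkeeping around restrictions --- checking that $F\cap P$ and $F'\cap P$ are flats of $M|P$ with union $P$, and that, because $R\cap H$ is a flat of $M$, a proper flat of $M|(R\cap H)$ really is a flat of $M$ that is properly contained in $R\cap H$ (so that taking closures inside $M$ is harmless). A degenerate situation, where $C^{*}$ contains fewer than two rank-one flats, makes $P$ empty and $R\cap H$ the rank-zero flat, which is trivially round; the argument above covers this case vacuously, so no separate treatment is strictly needed.
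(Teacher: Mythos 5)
Your proof is correct and follows essentially the same route as the paper: both rest on Propositions \ref{round-projections} and \ref{mod-hyp-sep-extends} and derive a contradiction by showing the member of a hypothetical vertical cover containing $P$ must be all of $\cl(C^{*})\cap H$. The only difference is cosmetic: you apply Proposition \ref{mod-hyp-sep-extends} with $U=P$ to identify $\cl(P)=\cl(C^{*})\cap H$ up front, whereas the paper applies it with $U=F$ inside the contradiction argument.
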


\begin{proof}
Let $R$ be $\cl(C^{*})$.
Assume for a contradiction that $(F,F')$ is a vertical cover of $R\cap H$.
Let $P$ be the union $\cup P_{H}(x,y)$ where $x$ and $y$ range over all distinct rank-one flats contained in $C^{*}$.
Note that $P$ is contained in $R\cap H$.
Proposition \ref{round-projections} says that $P$ is round.
Therefore $(F\cap P,F'\cap P)$ is not a vertical cover of $P$, so 
we can assume without loss of generality that $P$ is contained in $F$.
Applying Proposition \ref{mod-hyp-sep-extends}, we see that  $\cl(F\cup C^{*})\cap H$ is equal to $F$.
Thus $\cl(C^{*})\cap H = R\cap H$ is contained in $F$.
This contradicts the fact that $(F,F')$ is a vertical cover of $R\cap H$.
\end{proof}

\section{Supersolvability and saturation}

The following definition was introduced by Stanley \cite{stanley}.

\begin{definition}
The rank\dash $r$ matroid $M$ is \emph{supersolvable} if it has a chain of modular flats $F_{0}\subseteq F_{1}\subseteq \cdots\subseteq F_{r}$, where $r(F_{i}) = i$ for each $i$.
\end{definition}

We can give an equivalent, recursive, definition: if $r(M)>0$ then $M$ is supersolvable if it contains a modular hyperplane $H$ such that $M|H$ is supersolvable.
Note that every rank-zero matroid is trivially supersolvable.

\begin{definition}
A matroid is \emph{saturated} if every round flat is modular.
\end{definition}

\begin{proposition}
\label{saturated-restriction}
Let $F$ be a flat of the saturated matroid $M$.
Then $M|F$ is saturated.
\end{proposition}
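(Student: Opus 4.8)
The plan is to verify the defining property of saturation directly: take an arbitrary round flat $R$ of $M|F$ and show that $R$ is a modular flat of $M|F$. First I would record two routine transfer facts. Since $F$ is a flat of $M$, for any $X\subseteq F$ we have $\cl_{M|F}(X)=\cl_{M}(X)\cap F=\cl_{M}(X)$, because $\cl_{M}(X)\subseteq\cl_{M}(F)=F$; hence the flats of $M|F$ are exactly the flats of $M$ that are contained in $F$, and in particular $R$ is a flat of $M$. Secondly, roundness of a subset is intrinsic to the restriction it names: saying $R$ is round in $M|F$ means $(M|F)|R$ is round, and $(M|F)|R=M|R$ since $R\subseteq F$, so $R$ is also a round flat of $M$.

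Now I would apply the hypothesis: since $M$ is saturated and $R$ is a round flat of $M$, the flat $R$ is modular in $M$. The final step is to push this modularity down to the restriction. For that I would invoke the second assertion of Proposition~\ref{modular-intersection}: as $R$ is a modular flat of $M$ and $R\subseteq F\subseteq E(M)$, the flat $R$ is a modular flat of $M|F$. Since $R$ was an arbitrary round flat of $M|F$, this shows that every round flat of $M|F$ is modular, which is precisely the statement that $M|F$ is saturated.

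I do not anticipate a genuine obstacle here. The only points that require care are the two bookkeeping observations in the first paragraph — correctly identifying the flats and the round subsets of $M|F$ with their counterparts in $M$ — after which the conclusion is immediate from the saturation of $M$ together with Proposition~\ref{modular-intersection}.
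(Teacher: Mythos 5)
Your proof is correct and follows essentially the same route as the paper: transfer roundness of $R$ from $M|F$ to $M$, apply saturation of $M$ to get modularity of $R$ in $M$, then restrict modularity back down to $M|F$ (the paper cites Oxley's Proposition 6.9.5 directly, which is exactly the second assertion of Proposition~\ref{modular-intersection} that you invoke). The extra bookkeeping you include about flats and roundness transferring between $M$ and $M|F$ is accurate and simply makes explicit what the paper leaves implicit.
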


\begin{proof}
Let $R$ be a round flat of $M|F$.
Then $R$ is a round flat of $M$ so it is modular in $M$.
Now \cite{Oxley11}*{Proposition 6.9.5} implies that $R$ is a modular flat of $M|F$.
\end{proof}

If $M$ is supersolvable and saturated and $H$ is a modular hyperplane such that $M|H$ is supersolvable, then it follows from Proposition \ref{saturated-restriction} that $M|H$ is supersolvable and saturated.

\begin{proposition}
\label{supsol-mod-sep}
Let $M$ be a saturated matroid.
Let $H$ be a modular hyperplane of $M$ and let $C^{*}$ be the complementary cocircuit.
If $C^{*}$ is non-spanning, then $(H,\cl(C^{*}))$ is a modular cover of $M$.
\end{proposition}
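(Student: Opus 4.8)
The plan is to assemble the statement directly from results already established in the excerpt. Recall that $(H,\cl(C^{*}))$ is a modular cover of $M$ precisely when three conditions hold: (i) both $H$ and $\cl(C^{*})$ are modular flats; (ii) both are proper flats; and (iii) $H\cup\cl(C^{*}) = E(M)$. The hyperplane $H$ is modular by hypothesis, and it is a proper flat because it is a hyperplane, so conditions (i) and (ii) hold for $H$ with no further argument.

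For $\cl(C^{*})$, the key input is Proposition \ref{cocircuit-closure-rotunda}, which states that $\cl(C^{*})$ is a rotunda, and hence in particular a round flat of $M$. Since $M$ is saturated, every round flat is modular, so $\cl(C^{*})$ is a modular flat, giving condition (i). The hypothesis that $C^{*}$ is non-spanning is used exactly for condition (ii): if $C^{*}$ does not span $M$, then $\cl(C^{*})\ne E(M)$, so $\cl(C^{*})$ is a proper flat.

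It remains only to verify condition (iii). Since $C^{*} = E(M)-H$ is the cocircuit complementary to $H$, we have $H\cup C^{*} = E(M)$, and therefore $H\cup\cl(C^{*})\supseteq H\cup C^{*} = E(M)$. Combining the three conditions shows that $(H,\cl(C^{*}))$ is a modular cover of $M$. There is no substantial obstacle here: the entire content lies in correctly invoking Proposition \ref{cocircuit-closure-rotunda} (which identifies $\cl(C^{*})$ as a round flat and so lets saturation deliver modularity), and the only place the stated hypothesis on $C^{*}$ enters is in guaranteeing that $\cl(C^{*})$ is a proper flat.
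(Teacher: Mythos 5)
Your proof is correct and follows essentially the same route as the paper: both argue that $H$ is a proper modular flat, invoke Proposition \ref{cocircuit-closure-rotunda} to see that $\cl(C^{*})$ is round, use saturation to conclude it is modular, and use the non-spanning hypothesis for properness. Your explicit check that $H\cup\cl(C^{*})=E(M)$ is a detail the paper leaves implicit, but there is no substantive difference.
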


\begin{proof}
Certainly $H$ is a proper flat of $M$, and $C^{*}$ is non-spanning by hypothesis.
Therefore $(H,\cl(C^{*}))$ is a vertical cover.
We have assumed that $H$ is a modular flat.
Proposition \ref{cocircuit-closure-rotunda} says that $\cl(C^{*})$ is round.
Since $M$ is saturated, it follows that $\cl(C^{*})$ is modular, so the proof is complete.
\end{proof}

\begin{proposition}
\label{supersolvable-components}
Let $M$ be a matroid.
Then $M$ is supersolvable if and only if each of its connected components is supersolvable.
Similarly $M$ is saturated if and only if each of its connected components is saturated.
\end{proposition}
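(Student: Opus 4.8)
The plan is to work with the decomposition of $M$ into its connected components $M_1,\dots,M_k$, using the standard properties of the direct sum of matroids: the flats of $M=M_1\oplus\cdots\oplus M_k$ are exactly the sets $F_1\cup\cdots\cup F_k$ with $F_i$ a flat of $M_i$, closure is computed componentwise, and rank is additive across components. The engine of the proof is the observation that such a flat $F=F_1\cup\cdots\cup F_k$ is modular in $M$ if and only if $F_i$ is modular in $M_i$ for every $i$. This is immediate from additivity of rank: for an arbitrary flat $F'=F_1'\cup\cdots\cup F_k'$ one has
\[
r(F)+r(F')-r(F\cup F')-r(F\cap F')=\sum_{i=1}^{k}\bigl(r_{M_i}(F_i)+r_{M_i}(F_i')-r_{M_i}(F_i\cup F_i')-r_{M_i}(F_i\cap F_i')\bigr),
\]
a sum of nonnegative terms, so the left side vanishes for every $F'$ exactly when each summand vanishes for every choice of $F_i'$. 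In particular each $E(M_i)$ is a modular flat of $M$ with $M|E(M_i)=M_i$. I would also first reduce to the loopless case: the loops of $M$ constitute rank\dash zero connected components, lie in every flat, and deleting them affects neither closure, rank, modularity, nor roundness; once $M$ is loopless, the flats of $M_i$ are precisely the flats of $M$ contained in $E(M_i)$.

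For supersolvability, suppose first that each $M_i$ has a modular chain $\emptyset=F_0^i\subsetneq F_1^i\subsetneq\cdots\subsetneq F_{r_i}^i=E(M_i)$ with $r_i=r(M_i)$. Concatenating these chains block by block (filling up $M_1$, then $M_2$, and so on) and discarding repetitions produces an increasing chain of subsets of $E(M)$ whose typical member is $E(M_1)\cup\cdots\cup E(M_{m-1})\cup F_t^m$; each of whose components is a whole ground set, a member of a modular chain, or a rank\dash zero flat, and so each such set is a modular flat of $M$ by the lemma above, while consecutive members differ in rank by one. Hence $M$ is supersolvable. Conversely, given a modular chain $\emptyset=F_0\subsetneq\cdots\subsetneq F_r=E(M)$ of $M$, fix $i$ and consider the slices $F_j\cap E(M_i)$: these are flats of $M_i$, modular in $M_i$ by the lemma, forming a weakly increasing chain from the rank\dash zero flat to $E(M_i)$. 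Since $1=r(F_{j+1})-r(F_j)=\sum_i\bigl(r_{M_i}(F_{j+1}\cap E(M_i))-r_{M_i}(F_j\cap E(M_i))\bigr)$ with nonnegative summands, the slice for component $i$ gains rank at most one at each step and so attains every rank between $0$ and $r_i$; deleting repetitions gives a modular chain of $M_i$, so $M_i$ is supersolvable.

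For saturation, the crucial step is that every round flat of $M$ lies inside a single component. If $R=R_1\cup\cdots\cup R_k$ is a flat of $M$ and at least two of the restrictions $M|R_i$ have positive rank, then grouping the components accordingly exhibits $R$ as the union of two proper flats of $M|R$, a vertical cover contradicting the roundness of $M|R$; in the loopless setting this forces $R\subseteq E(M_{i_0})$ for a unique $i_0$, and then $R$ is a round flat of $M_{i_0}$ with $M_{i_0}|R=M|R$. Now if $M$ is saturated and $R$ is a round flat of $M_i$, then $R$ is a round flat of $M$ and hence modular in $M$, so the lemma (applied to $R$, whose component in $M_i$ is $R$ itself) gives that $R$ is modular in $M_i$; thus $M_i$ is saturated. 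Conversely, if every $M_i$ is saturated and $R$ is a round flat of $M$, then $R$ is a round flat of some $M_{i_0}$, hence modular in $M_{i_0}$, hence modular in $M$ by the lemma; thus $M$ is saturated.

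I expect the only step needing genuine care to be the componentwise modularity lemma — in particular the direction asserting that modularity of $F$ in $M$ forces modularity of each $F_i$, which rests on the quantifier interchange above — together with the localization of round flats to a single component; once these are in place both equivalences are essentially bookkeeping, with the treatment of loops and of rank\dash zero components being the place where errors are easiest to make.
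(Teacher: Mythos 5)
Your proof is correct, but it takes a partly different route from the paper's. The paper reduces to the case of two connected components and argues by induction, outsourcing the modularity bookkeeping to Oxley's Proposition 6.9.5 and Corollary 6.9.10: its forward direction for supersolvability is the same chain concatenation you give, but its converse takes a smallest counterexample and uses the recursive characterisation via a modular hyperplane $H$, noting that the complementary cocircuit lies in a single component so that $H\cap E(M_1)$ is a modular hyperplane of $M_1$. You instead treat all components simultaneously and prove the key transfer statement yourself --- a flat $F_1\cup\cdots\cup F_k$ is modular in $M$ if and only if each $F_i$ is modular in $M_i$ --- directly from additivity of rank over components together with submodularity; the quantifier step you flag does work, since choosing $F'$ to agree with $F$ outside component $i$ kills all other summands and isolates the $i$th one. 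Your converse for supersolvability is also genuinely different: you slice a maximal modular chain of $M$ componentwise and use the fact that each slice gains rank at most one per step (nested flats of equal rank being equal, deduplication yields the required chain), rather than inducting on a counterexample. The saturation argument (round flats localise to one component, modularity transfers componentwise) matches the paper's in substance. Your route buys self-containedness and avoids induction and the recursive definition; the paper's buys brevity by citing Oxley and exercises the modular-hyperplane recursion it uses throughout. Your explicit loop reduction is harmless but not strictly necessary, since flats of a direct sum are exactly unions of flats of the components even in the presence of loops.
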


\begin{proof}
This result will follow by an easy inductive argument if we can prove it in the case when $M$ has exactly two connected components.
Therefore we will assume that $M=M_{1}\oplus M_{2}$, where $M_{1}$ and $M_{2}$ are non-empty connected matroids.
For $i=1,2$, let $r_{i}$ be $r(M_{i})$.

Assume that $M_{1}$ and $M_{2}$ are supersolvable.
For $i=1,2$, let $F_{0}^{i}\subseteq F_{1}^{i}\subseteq \cdots\subseteq F_{r_{i}}^{i}$ be a chain of modular flats in $M_{i}$ such that each $F_{j}^{i}$ has rank $j$.
Using \cite{Oxley11}*{Corollary 6.9.10} we see that each $F_{j}^{1}\cup F_{k}^{2}$ is a modular flat of $M$.
Now it is easy to confirm that the chain
\[
F_{0}^{1}\subseteq F_{1}^{1}\subseteq \cdots\subseteq F_{r_{1}}^{1}\subseteq F_{r_{1}}^{1}\cup F_{1}^{2}\subseteq F_{r_{1}}^{1}\cup F_{2}^{2}\cdots\subseteq F_{r_{1}}^{1}\cup F_{r_{2}}^{2}
\]
certifies that $M$ is supersolvable.

For the other direction, assume that $M$ is supersolvable.
Assume for a contradiction that either $M_{1}$ or $M_{2}$ is not supersolvable.
We will assume that amongst such counterexamples, $M$ is as small as possible.
Now $M$ has a modular hyperplane $H$ such that $M|H$ is supersolvable.
The complement of $H$ is a cocircuit, and is therefore contained in either $M_{1}$ or $M_{2}$.
Without loss of generality we assume that $H$ contains $E(M_{2})$.
Now $M|H = (M_{1}|H)\oplus M_{2}$.
The minimality of $M$ means that $M_{1}|H$ and $M_{2}$ are both supersolvable.
But \cite{Oxley11}*{Corollary 6.9.10} implies that $H\cap E(M_{1})$ is a modular flat of $M_{1}$.
It is the complement of a cocircuit of $M_{1}$, so $H\cap M_{1}$ is a modular hyperplane of $M_{1}$, and restricting to this hyperplane produces a supersolvable matroid.
This shows that $M_{1}$ too is supersolvable, so the proof of this direction is complete.

From \cite{Oxley11}*{Corollary 6.9.10} we see that $E(M_{1})$ and $E(M_{2})$ are modular flats of $M$.
It follows from \cite{Oxley11}*{Proposition 6.9.5} that a flat of $M_{i}$ is modular in $M$ if and only if it is modular in $M_{i}$.
If $F$ is a round flat of $M$ then $F\subseteq E(M_{1})$ or $F\subseteq E(M_{2})$ because otherwise $(F\cap E(M_{1}),F\cap E(M_{2}))$ is a vertical cover of $M|F$.
In fact, the round flats of $M$ are exactly the round flats of $M_{1}$ along with the round flats of $M_{2}$.
From these considerations we can easily see that $M$ is saturated if and only if $M_{1}$ and $M_{2}$ are saturated.
\end{proof}

\subsection{Chordality for matroids}
\label{CHORDALITY}

We shall start this section by justifying the Venn diagram in Figure \ref{Fig4}.
Recall that if $C$ is a matroid circuit, then a chord of $C$ is an element $x\notin C$ such that $A\cup z$ and $B\cup z$ are both circuits for some partition of $C$ into sets $A$ and $B$.
A matroid is $C$\dash chordal if every circuit with at least four elements has a chord.

As we discussed in the introduction, the matroid in Figure \ref{Fig2} is supersolvable but not saturated.
To see that it is not $C$\dash chordal, note that $\{3,5,6,7\}$ has no chord.
Because the only round flats of $U_{3,6}$ are the empty set, the singleton sets, and the entire ground set, we can easily confirm that every round flat is modular, so $U_{3,6}$ is saturated.
It has no modular hyperplane, so it is not supersolvable, and no circuit has a chord so it is not $C$\dash chordal.
Recall that $\mathcal{W}^{4}$ is the rank-three matroid with ground set $\{a,b,c,d,e,f\}$ and non-spanning circuits $\{a,b,d\}$, $\{b,c,e\}$, and $\{a,c,f\}$.
It is easy to confirm that every circuit of size four has a chord.
However no line is modular, so $\mathcal{W}^{4}$ is not supersolvable, and it also follows that it is not saturated.

We will leave as an exercise the fact that the Fano matroid $F_{7}$ is supersolvable, saturated, and $C$\dash chordal.
Cordovil et al.\ note that $M^{*}(K_{3,3})$ is not supersolvable \cite{CFK04}.
It is an easy exercise to see that it is saturated and $C$\dash chordal.
Finally, let $M$ be the rank-three matroid with ground set $\{p,a,b,c,d,e,f,x\}$ where the non-spanning circuits are $\{p,a,b,c\}$, $\{p,d,e,f\}$, $\{a,d,x\}$, $\{b,e,x\}$, and $\{c,f,x\}$.
Now $\{p,a,b,c\}$ and $\{p,d,e,f\}$ are both modular hyperplanes, and we can easily confirm that $M$ is supersolvable.
On the other hand, $\{a,d,x\}$ is a round hyperplane that has empty intersection with the rank-two flat $\{b,f\}$.
Hence $\{a,d,x\}$ is not modular and therefore $M$ is not saturated.
On the other hand, a simple case-analysis shows that $M$ is $C$\dash chordal.
We can finish the justification of Figure \ref{Fig4} by proving that every supersolvable saturated matroid is $C$\dash chordal.
In fact, we prove something slightly stronger.

\begin{proposition}
\label{SSS-implies-chordal}
Let $C$ be a circuit in the supersolvable saturated matroid $M$ and assume that $|C|\geq 4$.
There exist distinct elements $x,y\in C$ and an element $z\notin C$ such that $\{x,y,z\}$ and $(C-\{x,y\})\cup z$ are circuits of $M$.
\end{proposition}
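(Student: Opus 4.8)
The plan is to reduce the statement to a single task: find distinct $x,y\in C$ whose closure $\cl(\{x,y\})$ is a \emph{modular line} of $M$. Everything else will then follow from routine circuit manipulations, and the modularity of $\cl(\{x,y\})$ will be extracted from the modular chain witnessing supersolvability together with the saturation hypothesis.

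\emph{The reduction.} Suppose distinct $x,y\in C$ have been found with $\cl(\{x,y\})$ modular. Since $C$ is a circuit, $C-\{x,y\}$ is independent, so $r(\cl(C-\{x,y\}))=|C|-2$; and $\cl(\{x,y\})\cup\cl(C-\{x,y\})$ contains $C$ yet is contained in $\cl(C)$, so it has rank $|C|-1$. Applying the defining equation for the modular flat $\cl(\{x,y\})$ to the flat $\cl(C-\{x,y\})$ gives $r(\cl(\{x,y\})\cap\cl(C-\{x,y\}))=2+(|C|-2)-(|C|-1)=1$. Let $z$ be any element of this rank\dash one flat. Neither $x$ nor $y$ lies in $\cl(C-\{x,y\})$ (otherwise $C$ with a single element removed would already be dependent), so $z\notin\{x,y\}$; one checks no two of $x,y,z$ are parallel, so $\{x,y,z\}$, a three\dash element rank\dash two set with no parallel pair, is a circuit. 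Next, $(C-\{x,y\})\cup z$ is dependent because $z\in\cl(C-\{x,y\})$, and it is a circuit: deleting $z$ leaves the independent set $C-\{x,y\}$, while if deleting some $e\in C-\{x,y\}$ left a dependent set we would get $z\in\cl(C-\{x,y,e\})$, hence $x\in\cl(\{y,z\})\subseteq\cl(C-\{x,e\})$, contradicting independence of $C-e$. Finally $z\notin C$, since otherwise $z\in C-\{x,y\}$ and $\{x,y,z\}$ is a circuit properly inside $C$.

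\emph{Producing the modular line.} Fix a modular chain $F_{0}\subseteq F_{1}\subseteq\cdots\subseteq F_{r}=E(M)$, and let $j$ be the largest index with $C\nsubseteq F_{j}$; this exists because $C$ contains no loop, so $C\nsubseteq F_{0}$. Then $C\subseteq F_{j+1}$, and $C$ has at least two elements outside $F_{j}$: if $x$ were the only one, then $C-x\subseteq F_{j}$ would force $x\in\cl(C-x)\subseteq F_{j}$. Choose distinct $x,y\in C\cap(F_{j+1}-F_{j})$; they are non\dash parallel, since no two elements of a circuit are parallel. The set $F_{j}\cup\cl(\{x,y\})$ contains $F_{j}$ properly and lies inside $F_{j+1}$, so it has rank $j+1$; since $F_{j}$ is modular this yields $r(F_{j}\cap\cl(\{x,y\}))=j+2-(j+1)=1$, so there is an element $z_{0}\in F_{j}\cap\cl(\{x,y\})$. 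As $z_{0}\in F_{j}$ but $x,y\notin F_{j}$, the points $x,y,z_{0}$ are three distinct points on the line $\cl(\{x,y\})$. A rank\dash two flat carrying at least three points admits no vertical cover, so it is round, and since $M$ is saturated it is modular. This supplies the line needed in the reduction.

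The only genuinely non\dash obvious move is recognising that one is really looking for a modular line meeting $C$ in two points, and that saturation is exactly the hypothesis turning ``a line carrying three points'' into ``a modular line''; the circuit computations are mechanical. An essentially equivalent organisation runs by induction on $r(M)$ via the recursive description of supersolvability: if $C$ lies in the chosen modular hyperplane $H$ one passes to $M|H$ (again supersolvable by definition and saturated by Proposition \ref{saturated-restriction}), and otherwise one is in the situation above with $H=F_{r-1}$ and $x,y$ lying in the cocircuit $E(M)-H$.
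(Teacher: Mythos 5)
Your proof is correct and follows essentially the same route as the paper's: both arguments locate two elements $x,y$ of $C$ lying outside a modular flat of the supersolvable chain, use that flat's modularity to place a third point on the line $\cl(\{x,y\})$, invoke saturation to upgrade the resulting round line to a modular one, and then extract $z$ from the rank-one intersection with $\cl(C-\{x,y\})$ — the paper merely packages this as an induction through a modular hyperplane (using the projections $P_{H}(x,y)$) and finishes with strong circuit elimination instead of your direct minimality check. The only nit is that $z$ should be chosen as a non-loop element of that rank-one flat (such an element exists since the flat has rank one), so that $\{x,y,z\}$ is genuinely a circuit.
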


\begin{proof}
Let $M$ be a smallest possible counterexample to the result.
If $r(M)\leq 2$ then the result holds vacuously, so $r(M)\geq 3$.
Let $H$ be a modular hyperplane of $M$ such that $M|H$ is supersolvable and saturated.
Let $C^{*}$ be the complement of $H$.

Choose $C$ to be an arbitrary circuit of $M$ such that $|C|\geq 4$.
If $C$ is a circuit of $M|H$, then the result holds by induction.
Therefore $C\cap C^{*}$ is non-empty.
Because $H$ is a flat it follows that $C\cap C^{*}$ contains distinct elements $x$ and $y$.
Let $L$ be $\cl(\{x,y\})$.
Note that $L$ contains an element in $P_{H}(x,y)$, so that $L$ is a rank-two flat containing at least three rank-one flats.
Now it is easy to confirm that $L$ is a round flat.
Since $M$ is saturated, it follows that $L$ is modular.
Note that $C$ contains exactly two elements of $L$ because $|C|\geq 4$.
Now
\[
r(\cl(C-L)\cap L) = r(C-L) +r(L) - r(C) = (|C|-2)+2-(|C|-1)=1.
\]
Therefore we choose an element $z$ which is in $\cl(C-L)\cap L$.
Note that neither $x$ nor $y$ is in $\cl(C-L)$, or else $C$ properly contains a circuit.
Therefore $z$ is in $L-\{x,y\}$ and $\{x,y,z\}$ is a circuit.
Let $C'\subseteq (C-L)\cup z$ be a circuit that contains $z$.
Now $(C'\cup\{x,y\})-z$ contains a circuit, by circuit elimination with $C'$ and $\{x,y,z\}$.
But $(C'\cup\{x,y\})-z$ is a subset of $C$, so $(C'\cup\{x,y\})-z=C$.
It follows that $C' = (C-L)\cup z$.
Thus $(C-L)\cup z$ and $\{x,y,z\}$ are both circuits and $M$ is not a counterexample after all.
\end{proof}

In the next results we justify using supersolvable saturated matroids as analogues for chordal graphs.

\begin{proposition}
\label{prop1}
Let $G$ be a graph, and let $F$ be a flat of $M(G)$.
Then $F$ is round if and only if $G[F]$ is a clique.
\end{proposition}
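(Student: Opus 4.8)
The plan is to pass to the subgraph carried by $F$. Let $W$ be the set of vertices of $G$ incident with an edge of $F$; then $M(G)|F$ is the cycle matroid of the graph $(W,F)$, and $F$ is round exactly when there is no partition of $F$ into two parts each of which fails to span $(W,F)$. Because $F$ is a flat, $\cl(F)$ consists of all edges of $G$ with both ends in a single component of $(W,F)$, so if $(W,F)$ is connected then $F=E(G[W])$ and $(W,F)=G[W]$, whereas if $(W,F)$ has two or more components then, since every vertex of $W$ meets an edge of $F$, each component carries an edge, and splitting $F$ into the edges of one component and the edges of the rest exhibits a vertical separation, so $F$ is not round. Hence the claim reduces to showing that a connected graph $H$ on $W$ has round cycle matroid if and only if $H$ is complete.

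For the ``if'' direction I would show $M(K_W)$ is round directly. Suppose $(A,B)$ is a partition of $E(K_W)$ with $A$ not spanning. Since $K_W$ is connected, $(W,A)$ is disconnected, so $W$ partitions into a component $W_1$ and its nonempty complement with no $A$-edge between them; but $K_W$ contains every edge between $W_1$ and $W\setminus W_1$, and all of these lie in $B$. These edges already form a connected spanning subgraph of $W$, so $B$ spans. Thus no partition of $E(K_W)$ is a vertical separation, and $F=E(K_W)$ is round.

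For the ``only if'' direction, assume $F$ is round and, as above, that $(W,F)=G[W]$ is connected; suppose for a contradiction that $G[W]$ is not complete, and choose distinct $w,v\in W$ with $wv\notin F$. Partition $F$ into the set $A$ of edges incident with $w$ and the set $B$ of the remaining edges of $F$. In $(W,A)$ the vertex $v$ is isolated, and in $(W,B)$ the vertex $w$ is isolated; since $|W|\ge 2$ neither $A$ nor $B$ spans $(W,F)$, so $(A,B)$ is a vertical separation, contradicting roundness. Therefore $G[W]$ is complete, i.e.\ $G[F]$ is a clique. The only point requiring care throughout is the graph--matroid dictionary --- that ``spanning'' for a subset of $F$ means spanning every component of $(W,F)$, and that flatness of $F$ identifies $(W,F)$ with the induced subgraph $G[W]$ --- and I do not expect a genuine obstacle beyond this bookkeeping.
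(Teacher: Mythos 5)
Your proof is correct, and it splits into one half that matches the paper and one that does not. The direction ``round implies clique'' is essentially the paper's argument: the paper also picks two vertices of $G[F]$ not joined by an edge of $F$ and splits $F$ into the star at one of them versus the remaining edges, obtaining a vertical cover of $M|F$; your preliminary reduction --- using flatness to show that a round flat induces a connected subgraph and hence equals $E(G[W])$ --- is bookkeeping that the paper leaves implicit, though it does usefully pin down that ``$G[F]$ is a clique'' must be read as ``$F$ is the full edge set of a complete subgraph'' (with the vertex-induced reading the statement would fail, e.g.\ for the flat consisting of two disjoint edges of $K_4$). Where you genuinely diverge is the direction ``clique implies round''. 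The paper supposes $(U,U')$ is a vertical cover of $M|F$, colours the edges of $F$ red or blue according to membership in $U$ or $U'$, uses the fact that the star at any vertex of the clique spans $F$, and reaches a contradiction via a triangle $\{e,f,g\}$, exploiting that $U$ and $U'$ are flats and hence closed under taking the third edge of a triangle. You instead use the partition formulation of roundness and observe that if one side $A$ of a partition of $E(K_W)$ fails to span, then the entire edge cut between a component of $(W,A)$ and its complement lies on the other side and is already a connected spanning subgraph. Your argument is more direct and does not need the two parts to be flats; the paper's colouring argument has the advantage that it is the template reused elsewhere (the same red/blue scheme proves Proposition \ref{round-projections}). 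Both arguments, like the paper's, implicitly assume $G$ has no loops, which is harmless here.
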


\begin{proof}
Let $M$ be $M(G)$.
Assume that $G[F]$ is not a clique.
Let $u$ and $v$ be distinct vertices in $G[F]$ that are not adjacent.
Let $U$ be the set of edges in $F$ that are incident with $u$, and let $U'$ be $F-U$.
If $f\in F$ is an edge incident with $u$, there is no cycle contained in $U'\cup f$ that contains $f$.
This shows that $\cl(U')$ is a proper flat of $M|F$.
The same argument shows that $\cl(U)$ is a proper flat of $M|F$, so $(\cl(U),\cl(U'))$ is a vertical cover of $M|F$.
Thus $F$ is not round.

For the other direction, assume that $G[F]$ is a clique, but that $(U,U')$ is a vertical cover of $G[F]$.
We colour the edges of $F$ red if they are in $U$, and blue if they are in $V$.
Note that an edge may be both red and blue.
Let $v$ be an arbitrary vertex of $G[F]$.
The set of edges incident with $v$ spans $F$, since $G[F]$ is a clique.
If all the edges of $F$ incident with $v$ are red, then $U$ contains $F$, a contradiction.
By symmetry, we can now let $e,f\in F$ be edges incident with $v$ so that $e$ is red but not blue, and $f$ is blue but not red.
Let $g$ be the edge of $F$ so that $\{e,f,g\}$ is the edge-set of a triangle.
If $g$ is red, then $f$ is also red, and if $g$ is blue, then $e$ is blue, and in either case we have a contradiction.
\end{proof}

\begin{corollary}
\label{graphic-implies-saturation}
Let $G$ be a graph.
Then $M(G)$ is a saturated matroid.
\end{corollary}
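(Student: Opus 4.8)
The plan is to unwind the definition of saturation using the two tools already in hand. Saturation requires that every round flat of $M(G)$ is modular, and Proposition \ref{prop1} identifies the round flats of $M(G)$ as exactly those flats $F$ for which $G[F]$ is a clique. So I would fix such a flat $F$, write $V_F$ for the set of vertices incident with the edges of $F$, and first record the structural consequence: since $F$ is a flat and $G[F]$ is complete, $F$ is precisely the edge set of the complete graph on $V_F$, so $V_F$ is a connected component of the spanning subgraph $(V(G),F)$ and $G$ has no edge joining two vertices of $V_F$ that lies outside $F$.

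To prove $F$ is modular I would invoke the circuit criterion of Proposition \ref{mod-flat-new-def}: let $F'$ be an arbitrary flat disjoint from $F$ and suppose for contradiction that there is a circuit $C\subseteq F\cup F'$ with elements from both $F$ and $F'$. Viewing $C$ as a cycle of $G$, the fact that $C$ meets $F$ forces $C$ to pass through at least two vertices of $V_F$, while the fact that $C$ meets $F'$ forces $C$ to pass through a vertex outside $V_F$ — because an edge of $F'$ joining two vertices of $V_F$ would itself lie in $F$, contradicting $F\cap F'=\emptyset$. From this I would extract a subpath $P$ of $C$ whose endpoints $u,u'$ lie in $V_F$, having at least one internal vertex and no internal vertex in $V_F$ (take $u$ and $u'$ to be the first vertices of $V_F$ met when traversing $C$ away from a vertex outside $V_F$ in either direction; they are distinct because $C$ meets $V_F$ in at least two vertices).

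Then every edge of $P$ is incident with an internal vertex of $P$, hence with a vertex outside $V_F$, so no edge of $P$ lies in $F$; since $P\subseteq C\subseteq F\cup F'$, this gives $P\subseteq F'$. Consequently $u$ and $u'$ lie in the same connected component of $(V(G),F')$, so the edge $uu'$ of $G$ (which exists because $G[V_F]$ is complete) lies in $\cl(F')=F'$; but $uu'$ also lies in $F$, contradicting $F\cap F'=\emptyset$. Hence no such circuit exists, $F$ is modular, and $M(G)$ is saturated. The only point requiring care is the clean extraction of the path $P$ and the bookkeeping that keeps its internal vertices out of $V_F$; once that is set up, everything else is immediate. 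An alternative route would be to contract the $F$-edges of $C$, or simply to quote the classical description of the modular flats of a graphic matroid, but the direct argument above seems the most self-contained given the results already available.
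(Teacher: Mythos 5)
Your proof is correct, and it reaches the conclusion by a more self-contained route than the paper. Both arguments begin the same way, using Proposition \ref{prop1} to identify the round flats of $M(G)$ with cliques of $G$; but where the paper then simply cites the classical fact that the edge set of a clique is a modular flat of a graphic matroid (Oxley, Proposition 6.9.11), you re-prove that fact from scratch via the circuit criterion of Proposition \ref{mod-flat-new-def}, extracting from a hypothetical circuit $C\subseteq F\cup F'$ a path $P$ whose internal vertices avoid $V_F$, concluding $P\subseteq F'$ and then forcing the edge $uu'$ into $F\cap F'$. That argument is sound, including the extraction of $P$ and the distinctness of $u$ and $u'$. What the citation buys the paper is brevity; what your version buys is independence from the external reference, at the cost of the path bookkeeping you acknowledge. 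One small point of care: the assertion that ``since $F$ is a flat and $G[F]$ is complete, $F$ is precisely the edge set of the complete graph on $V_F$'' is not a consequence of flatness and pairwise adjacency alone (in $M(K_4)$ the flat $\{ab,cd\}$ has pairwise adjacent vertices but is not the full clique); you should invoke roundness here, which forbids splitting $F$ into the edge sets of two components and hence forces $(V_F,F)$ to be connected, after which flatness does give that $F$ consists of all edges of $G$ on $V_F$. Since $F$ is round by hypothesis, this is a one-line repair rather than a gap.
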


\begin{proof}
From Proposition \ref{prop1} we see that every round flat of $M(G)$ is a clique of $G$, and any such flat is modular by \cite{Oxley11}*{Proposition 6.9.11}.
The result follows.
\end{proof}

The next result is a consequence of \cite{stanley}*{Proposition 2.8}.

\begin{proposition}
\label{chordal-iff-supersolv}
Let $G$ be a graph.
Then $G$ is chordal if and only if $M(G)$ is supersolvable.
\end{proposition}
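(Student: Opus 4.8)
The plan is to prove both implications by induction on $|E(G)|$, the crux being a description of the modular hyperplanes of $M(G)$. First I would reduce to the case that $G$ is simple and $2$\dash connected: by Proposition \ref{supersolvable-components}, $M(G)$ is supersolvable if and only if each of its connected components is, and the components of $M(G)$ are the cycle matroids of the blocks of $G$ (isolated vertices and the simplification being irrelevant to supersolvability); since a graph is chordal if and only if each of its blocks is chordal (an induced cycle is $2$\dash connected, hence lies inside a single block, and a block is an induced subgraph), we may assume $G$ is simple and $2$\dash connected, the cases $|V(G)|\le 2$ being trivial. The whole proof then rests on the following \emph{key lemma}: if $G$ is a simple $2$\dash connected graph, a hyperplane of $M(G)$ is modular if and only if it equals $E(G-v)$ for some simplicial vertex $v$ of $G$.

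For the easy direction of the lemma, suppose $v$ is simplicial; in a connected graph a simplicial vertex is never a cut-vertex (its neighbourhood is a clique, hence contained in one component of $G-v$), so $E(G-v)$ is a flat of rank $r(M(G))-1$, i.e.\ a hyperplane, with complementary cocircuit the star at $v$. I would verify modularity using the criterion that a hyperplane is modular exactly when it meets every rank\dash $2$ flat not contained in it. The rank\dash $2$ flats of a graphic matroid are easy to list — the edge set of an induced path or triangle on three vertices, or a pair of vertex\dash disjoint edges — and for each such flat meeting the star at $v$ one checks it contains an edge off the star; the only case needing input is a path $b-v-c$, which is disjoint from $E(G-v)$ unless $bc\in E(G)$, and this is exactly what simpliciality of $v$ supplies.

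For the converse direction of the lemma, let $H$ be a modular hyperplane of the (connected) matroid $M(G)$ with complementary cocircuit $C^{*}$. Since a modular hyperplane meets every rank\dash $2$ flat and two disjoint edges form a rank\dash $2$ flat, no two edges of $C^{*}$ are vertex\dash disjoint; an intersecting family of edges in a simple graph is a star or a triangle, and a triangle cannot be a minimal edge cut, so $C^{*}$ is contained in the star at some vertex $v$. As the star at $v$ is itself a cocircuit ($2$\dash connectedness of $G$ makes $G-v$ connected) and no cocircuit properly contains another, $C^{*}$ is the whole star, so $H=E(G-v)$; and the path $b-v-c$ argument again forces $v$ to be simplicial. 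This step — pinning $C^{*}$ down to a full vertex\dash star — is the main obstacle; everything else is bookkeeping.

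With the lemma established, both directions follow. If $G$ is chordal and $2$\dash connected, Dirac's theorem gives a simplicial vertex $v$; then $E(G-v)$ is a modular hyperplane, $M(G)\,|\,E(G-v)=M(G-v)$, and $G-v$ is chordal, so induction makes $M(G-v)$ supersolvable and hence $M(G)$ supersolvable. Conversely, if $M(G)$ is supersolvable, a modular hyperplane witnessing this is $E(G-v)$ for a simplicial $v$ by the lemma; $M(G-v)=M(G)\,|\,E(G-v)$ is supersolvable, and Proposition \ref{mod-hyp-connected} keeps us in the $2$\dash connected world, so induction makes $G-v$ chordal, whereupon Dirac's characterisation (chordal $\iff$ some vertex is simplicial with chordal complement) upgrades this to chordality of $G$. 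Alternatively, the proposition is immediate from Stanley's theorem that the bond lattice of $G$ is supersolvable precisely when $G$ is chordal \cite{stanley}*{Proposition 2.8}, since the lattice of flats of $M(G)$ is the bond lattice of $G$.
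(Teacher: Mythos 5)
Your argument is correct, but it takes a genuinely different route from the paper: the paper offers no proof at all for this proposition, simply noting that it is a consequence of Stanley's result \cite{stanley}*{Proposition 2.8} that the bond lattice of $G$ is supersolvable exactly when $G$ is chordal (the alternative you mention in your last sentence is precisely the paper's approach). What you do instead is reprove that fact in matroid language, and the substance is your key lemma: for a simple $2$\dash connected graph, the modular hyperplanes of $M(G)$ are exactly the sets $E(G-v)$ with $v$ simplicial. Your verification of it is sound --- the classification of rank\dash two flats of a simple graphic matroid (induced triangles/paths on three vertices, or two vertex\dash disjoint edges), the observation that a pairwise intersecting edge family is a star or a triangle, the parity argument ruling out a triangle as a cocircuit, and the antichain property of cocircuits forcing $C^{*}$ to be the whole star --- and the reduction to simple $2$\dash connected graphs via Proposition \ref{supersolvable-components} and the block decomposition is legitimate. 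One small remark: in the converse direction the appeal to Proposition \ref{mod-hyp-connected} to stay among $2$\dash connected graphs is a convenience rather than a necessity, since if the induction is run on the general statement (as you set it up) you may apply the hypothesis to $G-v$ directly, whether or not it is $2$\dash connected. The trade-off between the two routes is clear: the citation is shorter, while your argument is self-contained and yields extra structural information --- an explicit description of the modular hyperplanes of a graphic matroid, which makes precise the remark in the paper's introduction that the edges not incident with a simplicial vertex form a modular hyperplane.
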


The next result implies the known fact \cite{Golumbic04}*{Proposition 4.16} that in a chordal graph the number of maximal cliques does not exceed the number of vertices.

\begin{proposition}
Let $M$ be a supersolvable matroid.
Then $M$ has at most $r(M)$ rotunda.
\end{proposition}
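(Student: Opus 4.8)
The plan is to argue by induction on $r(M)$. The base case $r(M)=1$ is immediate: the only flats are the rank\dash zero flat and the ground set, the ground set is round (its only proper subflat does not span it), so $E(M)$ is the unique rotunda and $1\le 1$. So assume $r(M)\ge 2$. Since $M$ is supersolvable it has a modular hyperplane $H$ with $M|H$ supersolvable; let $C^{*}$ be the complementary cocircuit. Proposition~\ref{cocircuit-closure-rotunda} tells us that $\cl(C^{*})$ is a rotunda of $M$ and that every other rotunda of $M$ is contained in $H$. Hence, writing $\mathcal{S}$ for the set of rotunda of $M$ that are contained in $H$, we have $\mathcal{R}(M)=\{\cl(C^{*})\}\cup\mathcal{S}$, and this union is disjoint because $\cl(C^{*})\nsubseteq H$. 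Therefore $|\mathcal{R}(M)|=1+|\mathcal{S}|$.

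The crux is to show that every member of $\mathcal{S}$ is a rotunda of $M|H$, so that $|\mathcal{S}|\le|\mathcal{R}(M|H)|$. Let $R\in\mathcal{S}$. Being a flat of $M$ contained in $H$, $R$ is a flat of $M|H$, and since $M|R$ is round, $R$ is a round flat of $M|H$. Suppose $R$ were not maximal among round flats of $M|H$: then there is a round flat $R'$ of $M|H$ with $R\subsetneq R'$, so $R'$ is a round subset of $M$. Here I would use the easy observation that the closure of a round set is round: if $(F_{1},F_{2})$ were a vertical cover of $M|\cl_{M}(R')$, then $(F_{1}\cap R',F_{2}\cap R')$ would be a vertical cover of $M|R'$ (intersecting a flat of $M|\cl_{M}(R')$ with $R'$ gives a flat of $M|R'$, and a flat proper in $M|\cl_{M}(R')$ restricts to one proper in $M|R'$), contradicting the roundness of $R'$. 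Thus $\cl_{M}(R')$ is a round flat of $M$ with $R\subsetneq R'\subseteq\cl_{M}(R')$, which contradicts the maximality of $R$ as a round flat of $M$. Hence $R$ is a rotunda of $M|H$.

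Finally, $M|H$ is supersolvable of rank $r(M)-1$, so by the inductive hypothesis $|\mathcal{R}(M|H)|\le r(M)-1$, whence $|\mathcal{R}(M)|=1+|\mathcal{S}|\le 1+|\mathcal{R}(M|H)|\le 1+(r(M)-1)=r(M)$. The one point that needs care — and the only step that is not essentially mechanical once Proposition~\ref{cocircuit-closure-rotunda} is in hand — is the claim that a rotunda of $M$ lying inside $H$ is a rotunda of $M|H$. The subtlety is that a round flat of $M|H$ need not be a flat of $M$, so one must pass to its closure in $M$; this is exactly where the observation that the closure of a round set is round is invoked. Everything else is bookkeeping.
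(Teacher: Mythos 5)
Your proof is correct and follows essentially the same route as the paper: induct on rank via a modular hyperplane $H$, invoke Proposition \ref{cocircuit-closure-rotunda} to account for the one rotunda not inside $H$, and observe that rotunda of $M$ contained in $H$ are rotunda of $M|H$ (a step the paper asserts and you verify; note it is even easier than you suggest, since $H$ is a flat of $M$, so any flat of $M|H$ --- in particular your $R'$ --- is already a flat of $M$ and no closure is needed).
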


\begin{proof}
Let $H$ be a modular hyperplane of $M$ such that $M|H$ is supersolvable.
Any rotunda of $M$ that is contained in $H$ is a rotunda of $M|H$.
But $M|H$ has at most $r(M)-1$ rotunda by induction, and Proposition \ref{cocircuit-closure-rotunda} says there is exactly one rotunda of $M$ that is not a rotunda of $M|H$.
The result follows.
\end{proof}

\section{Reduced clique graphs and rotunda graphs}
\label{RC-and-ROTgraphs}

Let $G$ be a chordal graph.
The \emph{clique graph} of $G$, denoted $C(G)$, has the maximal cliques of $G$ as its vertices.
Two distinct maximal cliques are adjacent in $C(G)$ if and only if they have at least one vertex in common.
Our focus will be the \emph{reduced clique graph}, $C_{R}(G)$, which was introduced in \cite{GHP95}.
The vertices of $C_{R}(G)$ are again the maximal cliques of $G$.
Let $C_{1}$ and $C_{2}$ be distinct maximal cliques of $G$.
We say that $C_{1}$ and $C_{2}$ are a \emph{separating pair} if there is at least one vertex in $C_{1}\cap C_{2}$ and any path from a vertex of $C_{1}-C_{2}$ to a vertex of $C_{2}-C_{1}$ uses a vertex in $C_{1}\cap C_{2}$.
Now $C_{R}(G)$ is the subgraph of $C(G)$ where two maximal cliques are adjacent if and only if they form a separating pair.
We now define a matroid analogue of this graph.

\begin{definition}
Let $M$ be a supersolvable saturated matroid.
Recall that $\mathcal{R}(M)$ is the family of rotunda of $M$.
The \emph{rotunda graph} $R(M)$ is the graph with $\mathcal{R}(M)$ as its vertex set.
The rotunda $R_{1}$ and $R_{2}$ are adjacent in $R(M)$ if $R_{1}\cap R_{2}\ne \emptyset$ and there is a modular cover $(F_{1},F_{2})$ such that $R_{i}\subseteq F_{i}$ for $i=1,2$, and $F_{1}\cap F_{2} = R_{1}\cap R_{2}$.
In this case we say that the modular cover $(F_{1},F_{2})$ \emph{certifies} the adjacency of $R_{1}$ and $R_{2}$.
\end{definition}

The next result allows us to prove statements about rotunda graphs inductively.

\begin{proposition}
\label{mod-hyp-rot-graph}
Let $M$ be a supersolvable saturated matroid and let $H$ be a modular hyperplane of $M$ such that $M|H$ is supersolvable.
Let $C^{*}$ be the complement of $H$ and let $R$ be $\cl(C^{*})$.
Then $R$ is a rotunda of $M$ and either:
\begin{enumerate}[label = \textup{(\alph*)}]
\item $R\cap H$ is a rotunda of $M|H$ and \[\mathcal{R}(M)=(\mathcal{R}(M|H)-\{R\cap H\})\cup\{R\},\qquad\text{or}\]
\item $R\cap H$ is properly contained in a rotunda of $M|H$ and \[\mathcal{R}(M)=\mathcal{R}(M|H)\cup\{R\}.\]
\end{enumerate}
If case \textup{(a)} holds then $R(M|H)$ is obtained from $R(M)$ by relabelling $R$ as $R\cap H$.
If case \textup{(b)} holds then $R(M|H)$ is obtained from $R(M)$ by deleting $R$.
\end{proposition}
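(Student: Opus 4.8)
The plan is to split the claim into its vertex-set content (how $\mathcal{R}(M|H)$ compares with $\mathcal{R}(M)$, yielding the dichotomy (a)/(b)) and its edge content (how adjacencies in the two rotunda graphs correspond), and to handle the edges by transporting certifying modular covers across $H$ using Propositions \ref{mod-hyp-vert-sep} and \ref{prop3}.

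First I would fix the vertex sets. Proposition \ref{cocircuit-closure-rotunda} already gives that $R=\cl(C^{*})$ is a rotunda of $M$ and that every other rotunda of $M$ lies in $H$; write $\mathcal{A}$ for that family, so $\mathcal{R}(M)=\{R\}\cup\mathcal{A}$. Since a flat contained in $H$ is round in $M$ precisely when it is round in $M|H$, and the same holds for being maximal round, each member of $\mathcal{A}$ is a rotunda of $M|H$, while $R\cap H$ (which is round in $M|H$ by Proposition \ref{mod-hyp-round-intersect}) is a proper subflat of the round flat $R$ and hence is \emph{not} a rotunda of $M$. The key observation is that $R\cap H$ is the only rotunda of $M|H$ that can fail to lie in $\mathcal{A}$: if $S$ is such a rotunda then $S$ is not maximal round in $M$, so $S\subsetneq S'$ for some round flat $S'$ of $M$ not contained in $H$ (a subflat of $H$ would contradict maximality of $S$ in $M|H$), whence $S'\subseteq R$ by Proposition \ref{prop2}, so $S\subseteq R\cap H$, and maximality of $S$ forces $S=R\cap H$. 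This produces exactly the two cases: either $R\cap H$ is a rotunda of $M|H$, and $\mathcal{R}(M|H)=\mathcal{A}\cup\{R\cap H\}$ (case (a)), or $R\cap H$ is properly contained in some rotunda of $M|H$, which must then lie in $\mathcal{A}$, and $\mathcal{R}(M|H)=\mathcal{A}$ (case (b)). The two displayed set equalities for $\mathcal{R}(M)$ follow at once.

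Next I would match the edges. Going from $M|H$ to $M$: given a modular cover $(F_{1},F_{2})$ of $M|H$ certifying an adjacency between two members of $\mathcal{A}$ (or, in case (a), between $R\cap H$ and a member of $\mathcal{A}$), the union $P$ of the projections $P_{H}(x,y)$ of distinct rank-one flats in $C^{*}$ lies inside one part, so Proposition \ref{mod-hyp-vert-sep} lets me adjoin $C^{*}$ to that part to obtain a modular cover of $M$; because $C^{*}\cap H=\emptyset$ the intersection of the two parts is unchanged and $R=\cl(C^{*})$ now lies inside the enlarged part, so this cover certifies the matching adjacency in $R(M)$. (When the adjacency involves $R\cap H$ I need $C^{*}$ adjoined to the part \emph{containing} $R\cap H$, which is legitimate since $P\subseteq R\cap H$.) Going from $M$ to $M|H$: given a modular cover $(F_{1},F_{2})$ of $M$ certifying an adjacency whose prescribed intersection lies in $H$, the round flat $R=\cl(C^{*})$ must lie entirely in one part, say $F_{1}$; then $C^{*}\subseteq F_{1}$, so $F_{2}\cap C^{*}\subseteq F_{1}\cap F_{2}\subseteq H$ forces $F_{2}\cap C^{*}=\emptyset$, i.e.\ $F_{2}\subseteq H$; a one-line argument rules out $F_{2}=H$ (that would make one of the two rotunda of $M|H$ involved contain the other); so Proposition \ref{prop3} returns the modular cover $(F_{1}\cap H,F_{2})$ of $M|H$, which certifies the matching adjacency there. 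Assembling the two directions: in case (a) the map fixing $\mathcal{A}$ and sending $R\mapsto R\cap H$ is a graph isomorphism $R(M)\to R(M|H)$, i.e.\ $R(M|H)$ is $R(M)$ with $R$ relabelled; in case (b) the subgraph of $R(M)$ induced on $\mathcal{A}=\mathcal{R}(M|H)$ is exactly $R(M|H)$, i.e.\ $R(M|H)=R(M)-R$.

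The hard part will be the bookkeeping in the first step—proving that $R\cap H$ is the only possible ``new'' rotunda of $M|H$ and extracting the two cases cleanly—together with one delicate point in the edge transfer: verifying that the part of a certifying cover of $M$ that does not carry $R$ is contained in $H$, since this is precisely what makes Proposition \ref{prop3} applicable. Once Propositions \ref{cocircuit-closure-rotunda}, \ref{mod-hyp-round-intersect}, \ref{prop2}, \ref{prop3}, and \ref{mod-hyp-vert-sep} are in hand, the remaining verifications are routine.
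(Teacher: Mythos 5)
Your proposal is correct and follows essentially the same route as the paper: the vertex dichotomy via Propositions \ref{cocircuit-closure-rotunda}, \ref{mod-hyp-round-intersect}, and \ref{prop2}, and the two-directional transfer of certifying modular covers, lifting with Proposition \ref{mod-hyp-vert-sep} and descending to $M|H$. The only (cosmetic) difference is in the descent: you use roundness of $R$ to force $C^{*}$ into one part and then invoke Proposition \ref{prop3}, whereas the paper intersects both flats with $H$ and appeals to Proposition \ref{modular-intersection}, ruling out the degenerate case with Proposition \ref{round-separations}; both verifications are equivalent.
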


\begin{proof}
Note that $M|H$ is saturated as well as supersolvable (Proposition \ref{saturated-restriction}).
Proposition \ref{cocircuit-closure-rotunda} says that $R$ is a rotunda of $M$, and that moreover it is the unique rotunda of $M$ that is not contained in $H$.
Now it is an easy exercise to prove that every other rotunda of $M$ is a rotunda of $M|H$.
This shows $\mathcal{R}(M)\subseteq \mathcal{R}(M|H)\cup\{R\}$.

Proposition \ref{mod-hyp-round-intersect} says that $R\cap H$ is a round flat of $M|H$.
First assume that $R\cap H$ is a maximal round flat of $M|H$.
Then $R\cap H$ is a rotunda of $M|H$ but not of $M$, since $R\cap H$ is properly contained in $R$.
So in this case $\mathcal{R}(M)$ is contained in $(\mathcal{R}(M|H)-\{R\cap H\})\cup\{R\}$.
Now let $Z$ be a rotunda of $M|H$ that is not equal to $R\cap H$.
We will prove that $Z$ is a rotunda of $M$.
Assume otherwise.
Because $Z$ is a round flat of $M|H$, and hence of $M$, it is properly contained in a rotunda of $M$.
Let this rotunda be $Z'$.
Now $Z'$ is not contained in $H$, because in this case $Z$ and $Z'$ would both be rotunda of $M|H$, and then $Z$ cannot be properly contained in $Z'$.
So $Z'$ is a rotunda of $M$ that is not contained in $H$, and hence $Z'=R$.
Thus $Z$ is contained in $R\cap H$.
Because $Z$ is not properly contained in a round flat of $M|H$ we deduce that $Z = R\cap H$, contrary to hypothesis.
Thus $Z$ is a rotunda of $M$ and we have shown that when $R\cap H$ is a rotunda of $M|H$, the set $\mathcal{R}(M)$ is equal to $(\mathcal{R}(M|H)-\{R\cap H\})\cup\{R\}$ and case (a) holds.

Next we assume that $R\cap H$ is not a rotunda of $M|H$.
We have already shown that $\mathcal{R}(M)$ is contained in $\mathcal{R}(M|H)\cup\{R\}$.
Let $Z$ be a rotunda of $M|H$ and assume that $Z$ is not a rotunda of $M$.
Then $Z$ is properly contained in $Z'$, a rotunda of $M$.
As in the previous paragraph, $Z'=R$, so $Z$ is contained in $R\cap H$.
Again we deduce that $Z=R\cap H$, and we have a contradiction to $Z$ being a rotunda of $M|H$.
So in the case $\mathcal{R}(M)$ is equal to $\mathcal{R}(M|H)\cup\{R\}$.
Furthermore, $R\cap H$ is a round flat of $M|H$ but not a rotunda, so it must be properly contained in a rotunda of $M|H$.
Thus case (b) holds.

Assume case (a) holds.
We let $Z_{1}$ and $Z_{2}$ be distinct rotunda of $M$, where $Z_{1}$ is not equal to $R$.
Thus $Z_{1}$ is a rotunda of $M|H$.
Either $Z_{2}$ is equal to $R$ or it is not.
In the former case $Z_{2}\cap H = R\cap H$ and in the
latter $Z_{2}\cap H = Z_{2}$.
In either case $Z_{2}\cap H$ is a rotunda of $M|H$.
We will prove that $Z_{1}$ and $Z_{2}\cap H$ are adjacent in $R(M|H)$ if and only if $Z_{1}$ and $Z_{2}$ are adjacent in $R(M)$, and this will show that $R(M|H)$ is obtained from $R(M)$ by relabelling $R$ as $R\cap H$.

Assume that $(F_{1},F_{2})$ is a modular cover of $M$ that certifies the adjacency of $Z_{1}$ and $Z_{2}$ in $R(M)$.
Thus $F_{1}$ and $F_{2}$ are proper modular flats of $M$ and $F_{1}\cup F_{2}=E(M)$.
Moreover $F_{1}\cap F_{2}=Z_{1}\cap Z_{2}$.
Assume that either $F_{1}$ or $F_{2}$ contains $H$.
Since Proposition \ref{round-separations} implies that neither $F_{1}$ nor $F_{2}$ contains $Z_{1}\cup Z_{2}$, we deduce that $Z_{2}=R$ and $F_{1}=H$.
Now
\[R\cap H \subseteq F_{1}\cap F_{2} = Z_{1}\cap Z_{2}\]
so $Z_{1}$ contains $R\cap H$.
Since $Z_{1}$ and $R\cap H$ are both rotunda of $M|H$, we see that $Z_{1}=R\cap H$, and in this case $Z_{1}$ is properly contained in $Z_{2}$.
This is impossible, so $F_{1}\cap H$ or $F_{2}\cap H$ are proper flats of $M|H$.
Moreover, their union is equal to $H$.

Since $F_{1}$ and $F_{2}$ are modular flats of $M$ it follows that $F_{1}\cap H$ and $F_{2}\cap H$ are modular flats of $M$ (Proposition \ref{modular-intersection}), and hence modular flats of $M|H$.
Furthermore,
\[
(F_{1}\cap H)\cap(F_{2}\cap H)
=(F_{1}\cap F_{2})\cap H
=(Z_{1}\cap Z_{2})\cap H
=Z_{1}\cap (Z_{2}\cap H).
\]
Now we see that $(F_{1}\cap H,F_{2}\cap H)$ is a modular cover of $M|H$, and that it certifies the adjacency of $Z_{1}$ and $Z_{2}\cap H$ in $R(M|H)$.

For the other direction, assume $Z_{1}$ and $Z_{2}\cap H$ are adjacent in $R(M|H)$, and let $(F_{1},F_{2})$ be a modular cover of $M|H$ that certifies their adjacency.
Let $P$ be the union $\cup P_{H}(x,y)$, where $x$ and $y$ range over distinct rank-one flats in $C^{*}$.
We apply Proposition \ref{mod-hyp-vert-sep} and see that $P$ is contained in either $F_{1}$ or $F_{2}$.

Assume that $Z_{2}=R$.
Then $P$ is contained in $Z_{2}\cap H\subseteq F_{2}$.
In this case Proposition \ref{mod-hyp-vert-sep} implies that $(F_{1},F_{2}\cup C^{*})$ is a modular cover of $M$.
Moreover, \[F_{1}\cap (F_{2}\cup C^{*}) = F_{1}\cap F_{2} = Z_{1}\cap (Z_{2}\cap H) = Z_{1}\cap Z_{2}.\]
Thus $(F_{1},F_{2}\cup C^{*})$ certifies the adjacency of $Z_{1}$ and $Z_{2}$ in $R(M)$.
Next we assume that $Z_{2}\ne R$, so that $Z_{1}$ and $Z_{2}$ are both rotunda of $M|H$.
We again apply Proposition \ref{mod-hyp-vert-sep} and see that $(F_{i}\cup C^{*},F_{3-i})$ is a modular cover of $M$ for some $i\in\{1,2\}$, and as before we can see that $(F_{i}\cup C^{*},F_{3-i})$ certifies the adjacency of $Z_{1}$ and $Z_{2}$ in $R(M)$.
Thus we are now finished with case (a).

Assume case (b) holds.
Let $Z_{1}$ and $Z_{2}$ be two rotunda of $M|H$.
We can use exactly the same arguments as in the previous paragraphs to show that $Z_{1}$ and $Z_{2}$ are adjacent in $R(M|H)$ if and only if they are adjacent in $R(M)$.
Thus $R(M|H)$ is obtained from $R(M)$ by deleting the rotunda $R$ and the proof is complete.
\end{proof}

\subsection{Rotunda graphs vs.\ reduced clique graphs}
\label{VS}

In this section we compare rotunda graphs and reduced clique graphs.
Ultimately we will show that they are identical classes of graphs.
We also consider the connection between the reduced clique graph of $G$ and the rotunda graph of $M(G)$ when $G$ is a chordal graph.

\begin{proposition}
\label{edges-in-ROTgraph}
Let $G$ be a chordal graph.
Then the maximal cliques of $G$ are the rotunda of $M(G)$, and every edge in $R(M(G))$ is an edge in $C_{R}(G)$.
\end{proposition}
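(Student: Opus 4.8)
The plan is to deal with the two assertions separately. First I would note that $M(G)$ is supersolvable by Proposition~\ref{chordal-iff-supersolv} and saturated by Corollary~\ref{graphic-implies-saturation}, so that $R(M(G))$ is defined. Proposition~\ref{prop1} identifies the round flats of $M(G)$ as exactly the edge sets of cliques of $G$, and taking maximal elements on both sides shows that the rotunda of $M(G)$ are precisely the edge sets of the maximal cliques of $G$. This disposes of the first assertion, and lets me identify the two rotunda under consideration with maximal cliques of $G$.

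For the second assertion, suppose $C_1$ and $C_2$ are distinct maximal cliques whose edge sets $R_1$ and $R_2$ are adjacent in $R(M(G))$, certified by a modular cover $(F_1,F_2)$ with $R_i\subseteq F_i$, $F_1\cup F_2=E(G)$, and $F_1\cap F_2=R_1\cap R_2$. Writing $W=C_1\cap C_2$, the hypothesis $R_1\cap R_2\ne\emptyset$ together with $R_1\cap R_2=E(G[C_1])\cap E(G[C_2])=E(G[W])$ gives $|W|\ge 2$; in particular $W\ne\emptyset$, and distinctness of the two maximal cliques makes $C_1-C_2$ and $C_2-C_1$ non-empty. So it remains only to show that no path of $G$ joins $C_1-C_2$ to $C_2-C_1$ while avoiding $W$.

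Here is the crux. Suppose such a path $P$ exists, from $u\in C_1-C_2$ to $v\in C_2-C_1$ with $V(P)\cap W=\emptyset$, and fix $w\in W$. Since $C_1$ and $C_2$ are cliques, $uw$ and $vw$ are edges of $G$, with $uw\in R_1\subseteq F_1$ and $vw\in R_2\subseteq F_2$; as $w\notin V(P)$ and $u\ne v$, adjoining $uw$ and $vw$ to $P$ produces a cycle $Z$ of $G$, i.e.\ a circuit of $M(G)$. Now $uw\in Z\cap F_1$ and $F_1$ is modular, so Proposition~\ref{short-circuit} supplies an edge $f\in\cl(Z-F_1)\cap F_1$. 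On the one hand, since $Z\subseteq E(G)=F_1\cup F_2$, every edge of $Z$ outside $F_1$ lies in $F_2$, so $Z-F_1\subseteq F_2$ and hence $f\in\cl(Z-F_1)\subseteq\cl(F_2)=F_2$; thus $f\in F_1\cap F_2=E(G[W])$, forcing both ends of $f$ into $W$. On the other hand $uw\notin Z-F_1$, so $Z-F_1\subseteq E(P)\cup\{vw\}$, a graph whose vertex set is $V(P)\cup\{w\}$; therefore $f\in\cl(Z-F_1)$ joins two vertices of $V(P)\cup\{w\}$. Since $V(P)\cap W=\emptyset$, the only remaining possibility is that $f$ is a loop at $w$, which is impossible in a loopless graph. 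This contradiction shows that $C_1$ and $C_2$ form a separating pair, so $R_1R_2$ is an edge of $C_R(G)$.

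The main obstacle is the second assertion, and the key move is the choice of the circuit $Z$: grafting the two ``clique edges'' $uw$ and $vw$ onto a hypothetical $W$\dash avoiding path converts the separation question into a single\dash circuit statement in $M(G)$, after which Proposition~\ref{short-circuit} applied to the modular flat $F_1$, combined with the equality $F_1\cap F_2=R_1\cap R_2$, corners the element it produces into an impossible edge. The only other thing to watch is the degenerate behaviour when $G$ has loops or almost no edges; one handles this by assuming $G$ is simple (loops lie in every flat and play no role in paths anyway), or by restricting attention to the non\dash trivial components.
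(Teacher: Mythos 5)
Your proof is correct and follows essentially the same route as the paper's: both suppose a $(C_{1}\cap C_{2})$\dash avoiding path exists, close it into a cycle using edges of the cliques, apply Proposition~\ref{short-circuit} to a modular flat of the certifying cover, and use $F_{1}\cap F_{2}=R_{1}\cap R_{2}$ to force the resulting edge to join two vertices of $C_{1}\cap C_{2}$, yielding a contradiction. Your cycle is a little leaner than the paper's (which selects edges $e_{1}\notin F_{2}$ and $e_{2}\notin F_{1}$ at a common vertex of $C_{1}\cap C_{2}$ via Proposition~\ref{round-separations}), since in your setup the needed non-triviality of $Z-F_{1}$ is automatic because $vw\in F_{2}$ but $v\notin C_{1}\cap C_{2}$ forces $vw\notin F_{1}$.
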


\begin{proof}
The first statement follows from Proposition \ref{prop1}.
Let $M$ stand for $M(G)$, so that we identify the vertices of $C_{R}(G)$ and the vertices of $R(M)$.
Let $R_{1}$ and $R_{2}$ be rotunda that are adjacent in $R(M)$, and let $C_{1}$ and $C_{2}$ be the corresponding maximal cliques of $G$.
We will show that $C_{1}$ and $C_{2}$ are adjacent in $C_{R}(G)$.
Let $(F_{1},F_{2})$ be a modular cover of $M$ certifying the adjacency of $R_{1}$ and $R_{2}$, so that $R_{i}\subseteq F_{i}$ for $i=1,2$, and $F_{1}\cap F_{2} = R_{1}\cap R_{2}$.

Because $R_{1}$ and $R_{2}$ are adjacent in $R(M)$, they have a non-empty intersection, which means that $C_{1}$ and $C_{2}$ share at least two vertices.
Let $S$ be the set of vertices in both $C_{1}$ and $C_{2}$.
Thus $|S|\geq 2$.
If $C_{1}$ and $C_{2}$ form a separating pair, then there is nothing left for us to prove.
Therefore we will let $P$ be an $S$\dash avoiding path from $a_{1}\in C_{1}-C_{2}$ to $a_{2}\in C_{2}-C_{1}$.

Let $u$ be an arbitrary vertex in $S$.
Assume that every edge of $C_{1}$ incident with $u$ is in $F_{2}$.
Then $R_{1}$ is contained in $F_{2}$, which contradicts Proposition \ref{round-separations}.
Therefore we let $e_{1}$ be an edge of $C_{1}$ that is incident with $u$ and not in $F_{2}$.
By the same reasoning, we can let $e_{2}$ be an edge of $C_{2}$ that is incident with $u$ and not in $F_{1}$.
Assume that $e_{i}$ joins $u$ to $b_{i}$ for $i=1,2$.
Note that $b_{1}$ is in $C_{1}-C_{2}$, or else $e_{1}$ would be in $R_{2}\subseteq F_{2}$.
Similarly $b_{2}$ is in $C_{2}-C_{1}$.

We obtain the cycle $D$ from $P$ by appending the edges $e_{1}$ and $e_{2}$ as well as $a_{1}b_{1}$ and $a_{2}b_{2}$.
(This assumes that $a_{1}\ne b_{1}$; if $a_{1}=b_{1}$ then we do not append $a_{1}b_{1}$.
The same comment applies if $a_{2}=b_{2}$.)

Note that $D$ is not contained in $F_{2}$, as $e_{1}$ is not in $F_{2}$.
Because $F_{2}$ is a modular flat we can apply Proposition \ref{short-circuit} and deduce that there is an element $x\in F_{2}\cap \cl(D-F_{2})$.
Thus $D'\cup x$ is a cycle of $G$ for some subset $D'\subseteq D-F_{2}$.
Because $(D-F_{2})\cup x$ is a circuit of $M$ it follows that $x$ is in $F_{1}$ as well as $F_{2}$.
Therefore $x$ is in $R_{1}\cap R_{2}$, so $x$ joins two vertices of $S$.
Let $v$ be a vertex incident with $x$ such that $v$ is not $u$.
Thus $v$ is in the cycle $D$, so $v$ is either an internal vertex of $P$, or is equal to one of $a_{1}$, $b_{1}$, $a_{2}$, or $b_{2}$.
But none of the internal vertices of $P$ is in $S$, and $a_{1},b_{1}$ are in $C_{1}-C_{2}$ while $a_{2},b_{2}$ are in $C_{2}-C_{1}$.
Therefore we have a contradiction that completes the proof.
\end{proof}

From the previous result we know that $R(M(G))$ is a subgraph of $C_{R}(G)$.
To see that $R(M(G))$ and $C_{R}(G)$ need not be equal, we let $G$ be the path with two edges.
Thus $G$ is a tree and is therefore chordal.
There are two maximal cliques in $G$, and $M(G)$ has two rotunda.
However $C_{R}(G)$ consists of two vertices joined by an edge, whereas $R(M(G))$ consists of two isolated vertices, since the two rotunda of $M(G)$ are disjoint.
The next result shows that sufficient connectivity prevents this situation from happening.

\begin{proposition}
\label{2-con-same}
Let $G$ be a chordal graph that is $2$-connected.
Then $C_{R}(G) = R(M(G))$.
\end{proposition}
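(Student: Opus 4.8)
The plan is to show that $R(M(G))$ and $C_{R}(G)$ have the same edge set. Proposition~\ref{edges-in-ROTgraph} already identifies their vertex sets (via $C\leftrightarrow E(G[C])$) and shows $R(M(G))$ is a subgraph of $C_{R}(G)$, so it suffices to prove the reverse containment. Write $M=M(G)$, let $C_{1}$ and $C_{2}$ be distinct maximal cliques forming a separating pair, and let $R_{i}=E(G[C_{i}])$ be the corresponding rotunda; the goal is to produce a modular cover $(F_{1},F_{2})$ of $M$ with $R_{i}\subseteq F_{i}$ and $F_{1}\cap F_{2}=R_{1}\cap R_{2}$. First I would record that the separator $S:=C_{1}\cap C_{2}$ has $|S|\geq 2$: since $C_{1}$ and $C_{2}$ are distinct maximal cliques, $C_{1}-C_{2}$ and $C_{2}-C_{1}$ are both non-empty, and the separating-pair hypothesis says precisely that $S$ is a vertex cut disconnecting them, so $2$-connectedness (no cut-vertex) forces $|S|\geq 2$. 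In particular $R_{1}\cap R_{2}=E(G[S])\neq\emptyset$, so the non-emptiness requirement for adjacency in $R(M)$ is satisfied.

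Next I would build the cover from the clique separator $S$. In $G-S$ the clique $C_{1}-C_{2}$ lies in a single component $K_{1}$ and $C_{2}-C_{1}$ in a component $K_{2}$, with $K_{1}\neq K_{2}$ because no path from $C_{1}-C_{2}$ to $C_{2}-C_{1}$ avoids $S$. Set $V_{2}=S\cup K_{2}$ and $V_{1}=V(G)\setminus K_{2}$, so that $V_{1}\cup V_{2}=V(G)$, $V_{1}\cap V_{2}=S$, $C_{i}\subseteq V_{i}$, and no edge of $G$ joins $V_{1}\setminus S$ to $V_{2}\setminus S$. Put $F_{i}=E(G[V_{i}])$. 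The edge set of an induced subgraph is a flat of $M$; each $F_{i}$ is proper (an edge joining $C_{2}-C_{1}$ to $S$ lies in $F_{2}\setminus F_{1}$, and symmetrically); $F_{1}\cup F_{2}=E(G)$ since no edge crosses; $F_{1}\cap F_{2}=E(G[S])=R_{1}\cap R_{2}$; and $R_{i}\subseteq F_{i}$. Hence, once $F_{1}$ and $F_{2}$ are shown to be modular, $(F_{1},F_{2})$ certifies the adjacency of $R_{1}$ and $R_{2}$ in $R(M)$, which finishes the proof.

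The hard part will be the modularity of $F_{1}$ (the case of $F_{2}$ is symmetric), and this is where I would use that $S$ is a clique. By Proposition~\ref{mod-flat-new-def} it is enough to show that whenever $F'$ is a flat disjoint from $F_{1}$, no cycle $C\subseteq F_{1}\cup F'$ has edges in both $F_{1}$ and $F'$; I would take such a $C$ of minimum length, if one exists, and seek a contradiction. Every edge of $F'$ has an endpoint in $V_{2}\setminus S$, since otherwise it lies in $F_{1}$. The first claim is that $C$ has at most two vertices in $S$: $C$ is not contained in $F_{1}\supseteq E(G[S])$, so it is not a triangle inside $S$; and if $u,v\in S$ were a non-consecutive pair of vertices of $C$, then $uv\in E(G[S])\subseteq F_{1}$ is a chord, and circuit elimination on $C$ and this chord splits $C$ into two strictly shorter cycles, each containing the $F_{1}$-edge $uv$ and at least one of which still contains an $F'$-edge of $C$, contradicting minimality. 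Given this claim, delete $C\cap S$ from the cycle $C$: the result is a union of at most two paths whose vertices avoid $S$, and since there is no $(V_{1}\setminus S)$--$(V_{2}\setminus S)$ edge, each such path lies wholly in $V_{1}\setminus S$ or wholly in $V_{2}\setminus S$. Tracing the cycle then forces every vertex of $C$ into $V_{1}$ or every vertex of $C$ into $V_{2}$. In the first case $C\subseteq F_{1}$, with no $F'$-edge, a contradiction. In the second case $C\subseteq E(G[V_{2}])$, so $C\cap F_{1}=C\cap E(G[S])$; this is empty unless $C$ has exactly two (necessarily consecutive) vertices $s_{1},s_{2}$ of $S$ and $C\cap F_{1}=\{s_{1}s_{2}\}$, in which case $C-\{s_{1}s_{2}\}$ is an $s_{1}$--$s_{2}$ path contained in $F'$, forcing $s_{1}s_{2}\in\cl(F')=F'$ and contradicting $F'\cap F_{1}=\emptyset$. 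Every case yields a contradiction, so $F_{1}$ is modular.

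An alternative to the last paragraph would be to observe that $G$ is the clique-sum of $G[V_{1}]$ and $G[V_{2}]$ along the clique $G[S]$, whence $M$ is the generalised parallel connection of $M(G[V_{1}])$ and $M(G[V_{2}])$ across the modular flat $M(G[S])$ (the latter being modular by the same result of \cite{Oxley11} used for Corollary~\ref{graphic-implies-saturation}), and then to quote the standard fact that the ground sets of the two constituent matroids are modular flats of a generalised parallel connection. Either route isolates the single substantive point—modularity of $F_{1}$ and $F_{2}$—while everything else is routine bookkeeping with induced subgraphs and cliques.
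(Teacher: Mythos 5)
Your proof is correct, and while it follows the same overall reduction as the paper (use Proposition \ref{edges-in-ROTgraph} to handle one containment, use $2$\dash connectedness to force $|C_{1}\cap C_{2}|\geq 2$, then exhibit a modular cover certifying adjacency), the cover itself and the modularity verification are genuinely different. The paper fixes $a_{1}\in C_{1}-C_{2}$, lets $U_{1}$ be the edges lying on $S$\dash avoiding paths with $a_{1}$ as a terminal vertex, sets $F_{i}=\cl(U_{i})$ with $U_{2}=E(G)-U_{1}$, and then proves $F_{1}\cap F_{2}=R_{1}\cap R_{2}$, $R_{i}\subseteq F_{i}$ and modularity through a sequence of path-extension arguments. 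You instead split the vertex set along the clique separator $S$: with $K_{2}$ the component of $G-S$ containing $C_{2}-C_{1}$, you take $F_{2}=E(G[S\cup K_{2}])$ and $F_{1}=E(G[V(G)\setminus K_{2}])$, so flatness, $F_{1}\cup F_{2}=E(G)$, $F_{1}\cap F_{2}=E(G[S])=R_{1}\cap R_{2}$ and $R_{i}\subseteq F_{i}$ are immediate, and all the work is concentrated in modularity, which you settle via Proposition \ref{mod-flat-new-def} by a minimum-length offending cycle: chords inside the clique $S$ force any two $S$\dash vertices of the cycle to be consecutive, and the absence of edges between $V_{1}\setminus S$ and $V_{2}\setminus S$ then traps the cycle in one side, giving a contradiction in each case. (Note the two covers genuinely differ when $G-S$ has more than two components: the paper's $F_{1}$ is built from the component of $a_{1}$ only, while yours places every component other than $K_{2}$ on the $F_{1}$ side.) Your route buys shorter bookkeeping at the cost of the minimal-counterexample argument; the paper's route works directly with the path structure it has already set up for Proposition \ref{edges-in-ROTgraph}. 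Your closing alternative via clique-sums and generalised parallel connections is plausible but, as stated, leans on a modularity property of the parts of a generalised parallel connection that you would still need to cite precisely or prove; the direct argument you give is the complete one.
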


\begin{proof}
We identify the vertices of $C_{R}(G)$ and $R(M)$.
By virtue of Proposition \ref{edges-in-ROTgraph}, it suffices to show that every edge of $C_{R}(G)$ is also an edge of $R(M)$.
To this end let $C_{1}$ and $C_{2}$ be maximal cliques of $G$ that are adjacent in $C_{R}(G)$.
Let $R_{i}$ be the edge set of $C_{i}$ for $i=1,2$.
Then $R_{1}$ and $R_{2}$ are rotunda of $M$.
We will show they are adjacent in $R(M)$.

Set $S$ to be the set of vertices in both $C_{1}$ and $C_{2}$.
Since $C_{1}$ and $C_{2}$ are adjacent in $C_{R}(G)$ it follows that $S\ne \emptyset$.
For each $i=1,2$, let $a_{i}$ be a vertex in $C_{i}-C_{3-i}$.

\begin{claim}
$R_{1}\cap R_{2}\ne \emptyset$
\end{claim}

\begin{proof}
This claim holds if $|S|\geq 2$, because then any edge joining two vertices of $S$ is in $R_{1}\cap R_{2}$.
So assume that $|S|=1$ and let $v$ be the unique vertex of $S$.
Now $C_{1}$ and $C_{2}$ form a separating pair, so $a_{1}$ and $a_{2}$ are in different connected components of $G-S=G-v$, but this contradicts the fact that $G$ is $2$\dash connected.
\end{proof}

Now we know that $R_{1}$ and $R_{2}$ are not disjoint we can complete the proof by constructing a modular cover to certify their adjacency in $R(M)$.
Let $U_{1}$ be the set of edges that are contained in $S$\dash avoiding paths having $a_{1}$ as a terminal vertex.
Observe that every edge incident with $a_{1}$ is in $U_{1}$.
Let $U_{2}$ be the set of edges of $G$ not in $U_{1}$.
Thus $(U_{1},U_{2})$ is a partition of the edge set.

\begin{claim}
$(U_{1},U_{2})$ is a vertical separation of $M$.
\end{claim}

\begin{proof}
We must prove that neither $U_{1}$ nor $U_{2}$ is spanning in $M$.
Let $e$ be any edge incident with $a_{2}$.
We claim that $e$ is not in $U_{1}$.
Assume otherwise, and let $P$ be an $S$\dash avoiding path with $a_{1}$ as a terminal vertex, where $P$ contains $e$.
Since $e$ is incident with $a_{2}$, we can let $P'$ be a subpath of $P$ from $a_{1}$ to $a_{2}$.
As $C_{1}$ and $C_{2}$ form a separating pair, it follows that $P'$ contains a vertex of $S$.
But the end vertices of $P'$ are $a_{1}$ and $a_{2}$, and neither is in $S$, so $P'$ has an internal vertex in $C_{1}\cap C_{2}$.
Thus $P$ does as well, a contradiction.
Therefore $e$ is not in $U_{1}$.

Assume that $U_{1}$ is spanning.
Let $e$ be an edge incident with $a_{2}$.
Then $e$ is in $U_{2}$ by the previous paragraph.
Since it is in the closure of $U_{1}$, we can let $D$ be a cycle containing $e$ such that every other edge of $D$ is in $U_{1}$.
In particular, this means that $a_{2}$ is incident with an edge of $U_{1}$, contrary to the previous paragraph.
So $U_{1}$ is not spanning.

Similarly, if $U_{2}$ is spanning, then we let $e$ be an edge incident with $a_{1}$.
Then $e$ is not in $U_{2}$, so we can let $D$ be a cycle that contains $e$, where all the other edges of $D$ are in $U_{2}$.
This implies that an edge incident with $a_{1}$ is in $U_{2}$, which contradicts an earlier conclusion.
Therefore $(U_{1},U_{2})$ is a vertical separation.
\end{proof}

For $i=1,2$, we let $F_{i}$ be $\cl(U_{i})$.
Recall that $R_{i}$ is the edge-set of $C_{i}$.

\begin{claim}
$F_{1}\cap F_{2}=R_{1}\cap R_{2}$.
\end{claim}

\begin{proof}
Let $e$ be an edge that joins vertices $u$ and $v$.
First assume that $e$ is in $R_{1}\cap R_{2}$.
Then $u$ and $v$ are in $S$.
This means there is no $S$\dash avoiding path containing $e$ with $a_{1}$ as a terminal vertex.
Hence $e$ is not in $U_{1}$ so it is in $U_{2}$.
However, $a_{1}$ is adjacent to $u$ and $v$, and the edges $a_{1}u$ and $a_{1}v$ are in $U_{1}$, so $e$ is in $\cl(U_{1})$.
Thus $e$ is in $F_{1}\cap F_{2}$ and we have shown that $R_{1}\cap R_{2}\subseteq F_{1}\cap F_{2}$.

For the other direction, assume that $e$ is in $F_{1}\cap F_{2}$.
First assume that $e$ is in $U_{1}$.
Let $P$ be an $S$\dash avoiding path with $a_{1}$ as a terminal vertex such that $e$ is in $P$.
We can assume that either $u$ or $v$ is a terminal vertex of $P$.

Since $e$ is in $U_{1}\cap \cl(U_{2})$ we can let $D$ be a cycle such that $e$ is in $D$, and every other edge of $D$ is in $U_{2}$.
Thus both $u$ and $v$ are incident with edges in $U_{2}$.
Let $e'$ be an edge incident with $u$ that is in $U_{2}$.
Assume for a contradiction that $u$ is not in $S$.
If $u$ is a terminal vertex of $P$ then we obtain a new path by adding $e'$ to the end of $P$.
No internal vertex of this new path is in $S$, so it implies that $e'$ is in $U_{1}$, a contradiction to $e'$ being in $U_{2}$.
Therefore $u$ is not a terminal vertex of $P$.
Since $P$ contains $e$, it follows that $u$ is an internal vertex of $P$, so $v$ is a terminal vertex of $P$.
In this case we can obtain a new path from $P$ by replacing the edge $e$ with $e'$.
Again we see that $e'$ is in $U_{1}$ and we have a contradiction.
Therefore $u$ is in $S$, and by symmetry, so is $v$.
Hence $e$ joins two vertices of $S$, and is thus in $R_{1}\cap R_{2}$.

We must also consider the case that $e$ is in $U_{2}\cap\cl(U_{1})$.
Let $D$ be a cycle that contains $e$, where every other edge of $D$ is in $U_{1}$.
Let $x$ be an edge of $D-e$ that is incident with $u$.
Thus $x$ is in $U_{1}$.
Let $P$ be an $S$\dash avoiding path containing $x$ and $a_{1}$ as a terminal vertex.
If $u$ is not in $S$, then we can either extend $P$ by adding the edge $e$, or replacing $x$ in $P$ with $e$.
In either case, the new path shows that $e$ is in $U_{1}$, a contradiction.
Therefore $u$, and by symmetry $v$, is in $S$, so we again see that $e$ is in $R_{1}\cap R_{2}$.
Hence $F_{1}\cap F_{2}\subseteq R_{1}\cap R_{2}$ and the claim is proved.
\end{proof}
    
Recall that $a_{1}$ is in the clique $C_{1}$.
Every edge incident with $a_{1}$ is in $U_{1}$.
As every edge of $C_{1}-a_{1}$ is spanned by two such edges, it follows that $R_{1}$ is contained in $F_{1}$.
We must also show that $R_{2}$ is spanned by $U_{2}$.
Let $e$ be an edge in $R_{2}$ and assume that it is not in $F_{2}$.
In particular, this means that $e$ is not in $U_{2}$, so it is in $U_{1}$.
Let $P$ be an $S$\dash avoiding path containing $e$ that has $a_{1}$ as a terminal vertex.
Let $u$ be the first internal vertex of $P$ that is incident with $e$.
Then $u$ is not in $S$, as $P$ is $S$\dash avoiding.
But $u$ is in $C_{2}$, since $e$ is in $R_{2}$.
Thus $u$ is in $C_{2}-C_{1}$, and the subpath of $P$ from $a_{1}$ to $u$ is an $S$\dash avoiding path from a vertex of $C_{1}-C_{2}$ to a vertex of $C_{2}-C_{1}$.
This is a contradiction, as $C_{1}$ and $C_{2}$ form a separating pair.
This shows that $R_{2}$ is contained in $F_{2}$, as claimed.

Now we can complete the proof that $R_{1}$ and $R_{2}$ are adjacent in $R(M)$ by showing that $(F_{1},F_{2})$ is a modular cover.
Assume that $F_{1}$ is not a modular flat, so that by utilising Proposition \ref{mod-flat-new-def} we can let $F$ be an arbitrary flat of $M$ that is disjoint from $F_{1}$ such that some circuit $C\subseteq F\cup F_{1}$ contains elements of both $F$ and $F_{1}$.
If each connected component of $G[F]$ shares at most one vertex with $G[F_{1}]$, then no such cycle can exist.
Therefore we let $u$ and $v$ be distinct vertices from the same connected component of $G[F]$ so that both of $u$ and $v$ are incident with edges in $F_{1}$.
Since $u$ is incident with an edge in $F_{1}$, it is incident with an edge in $U_{1}$.
Let $e$ be such an edge, and let $P$ be a shortest-possible $S$\dash avoiding path that contains $e$ and has $a_{1}$ as a terminal vertex.
Let $f$ be an edge of $F$ that is incident with $u$.
If $u$ is not in $S$, then extending $P$ by adding $f$ shows that $f$ is in $U_{1}$, a contradiction.
Therefore $u$, and by symmetry $v$, is in $S$.
This means that there is an edge $g$ of $R_{1}$ that joins $u$ and $v$.
Thus $g$ is in $R_{1}\subseteq F_{1}$.
But there is a path of $G[F]$ that joins $u$ to $v$, so $g$ is in $\cl(F) = F$, and we have contradicted $F\cap F_{1}=\emptyset$.
Therefore $F_{1}$ is a modular flat.
Almost exactly the same argument shows that $F_{2}$ is a modular flat.
\end{proof}

The previous result shows that when $G$ is a $2$\dash connected chordal graph, $C_{R}(G)$ is isomorphic to the rotunda graph of a supersolvable saturated matroid.
In fact, this is true even when $G$ is not $2$\dash connected, as we now show.
First we make a simple observation.
Recall from Proposition \ref{supersolvable-components} that a matroid is supersolvable and saturated if and only if all its components are.

\begin{proposition}
\label{conn-components}
Let $G$ be a chordal graph with connected components $H_{1},\ldots, H_{k}$.
Then $C_{R}(G)$ is the disjoint union of $C_{R}(H_{1}),\ldots, C_{R}(H_{k})$.
Similarly, if $M$ is a supersolvable saturated matroid with connected components $N_{1},\ldots, N_{k}$, then $R(M)$ is the disjoint union of $R(N_{1}),\ldots, R(N_{k})$.
\end{proposition}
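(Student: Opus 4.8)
The plan is to reduce both statements to the case of exactly two components, say $M=N_{1}\oplus N_{2}$ (and correspondingly $G=H_{1}\sqcup H_{2}$), the general case then following by a routine induction; Proposition \ref{supersolvable-components} guarantees that each $N_{i}$ is again supersolvable and saturated, so $R(N_{i})$ is defined. For the graph statement, the first point is that a clique is connected and hence lies inside a single component, so the maximal cliques of $G$ are exactly the maximal cliques of $H_{1}$ together with those of $H_{2}$; this matches up the vertex sets of $C_{R}(G)$ and $C_{R}(H_{1})\sqcup C_{R}(H_{2})$. Two maximal cliques in different components have empty intersection, so they are not a separating pair and hence not adjacent in $C_{R}(G)$; and two maximal cliques $C,C'$ lying in the same $H_{i}$ have the property that every path of $G$ from a vertex of $C-C'$ to a vertex of $C'-C$ stays inside $H_{i}$, so $C,C'$ form a separating pair in $G$ precisely when they do in $H_{i}$. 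This yields $C_{R}(G)=C_{R}(H_{1})\sqcup C_{R}(H_{2})$.

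For the matroid statement I would run the same kind of argument. From the proof of Proposition \ref{supersolvable-components} we may use that $E(N_{1})$ and $E(N_{2})$ are modular flats of $M$, that a flat of $N_{i}$ is modular in $M$ if and only if it is modular in $N_{i}$, and that every round flat of $M$ is contained in one of $E(N_{1})$, $E(N_{2})$; since roundness of a flat depends only on the restriction to it, the round flats of $N_{i}$ are exactly the round flats of $M$ contained in $E(N_{i})$, and maximality passes both ways, so the rotunda of $M$ are exactly the rotunda of $N_{1}$ together with those of $N_{2}$. Rotunda of different components intersect emptily and so are non-adjacent in $R(M)$, giving no edges across the two parts. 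It then remains to show, for distinct rotunda $R_{1},R_{2}$ both belonging to $N_{i}$, that they are adjacent in $R(M)$ if and only if they are adjacent in $R(N_{i})$. Given a modular cover $(G_{1},G_{2})$ of $N_{i}$ certifying adjacency in $R(N_{i})$, set $F_{1}=G_{1}\cup E(N_{3-i})$ and $F_{2}=G_{2}$: by \cite{Oxley11}*{Corollary 6.9.10} (as used in Proposition \ref{supersolvable-components}) these are modular flats of $M$, they are proper since $G_{1},G_{2}$ are, their union is $E(M)$, and $F_{1}\cap F_{2}=G_{1}\cap G_{2}=R_{1}\cap R_{2}$ with $R_{\ell}\subseteq F_{\ell}$; so $(F_{1},F_{2})$ certifies adjacency in $R(M)$. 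Conversely, given a modular cover $(F_{1},F_{2})$ of $M$ certifying adjacency in $R(M)$, the pair $(F_{1}\cap E(N_{i}),F_{2}\cap E(N_{i}))$ consists of flats that are modular in $M$, and hence in $N_{i}$, by Proposition \ref{modular-intersection}; their union is $E(N_{i})$, their intersection is $F_{1}\cap F_{2}\cap E(N_{i})=R_{1}\cap R_{2}$, and each is proper in $N_{i}$, for otherwise $E(N_{i})\subseteq F_{\ell}$ would force $R_{3-\ell}\subseteq F_{1}\cap F_{2}=R_{1}\cap R_{2}\subseteq R_{\ell}$, contradicting that $R_{1},R_{2}$ are distinct rotunda. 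Thus adjacency in $R(M)$ and in $R(N_{i})$ coincide, and $R(M)=R(N_{1})\sqcup R(N_{2})$.

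I expect the only real care to be in the matroid half, namely in confirming that the extended pair $(G_{1}\cup E(N_{3-i}),G_{2})$ and the restricted pair $(F_{1}\cap E(N_{i}),F_{2}\cap E(N_{i}))$ are genuine modular covers, i.e.\ that modularity and properness of flats transfer between $M$ and a single component. These are exactly what the direct-sum statements cited from \cite{Oxley11} and Proposition \ref{modular-intersection} deliver, so the verification is essentially bookkeeping; note also that the requirement $R_{1}\cap R_{2}\neq\emptyset$ in the definition of adjacency is automatically preserved, since in both constructions the intersection of the two flats equals $R_{1}\cap R_{2}$ on the nose.
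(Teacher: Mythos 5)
Your proof is correct and takes essentially the same approach as the paper, whose entire argument is the single observation that maximal cliques (respectively rotunda) lying in different components share no vertices (respectively elements) and hence cannot be adjacent. The extra bookkeeping you supply --- matching up the vertex sets and transferring certifying modular covers between $M$ and its components via the direct-sum facts and Proposition \ref{modular-intersection} --- is exactly the part the paper leaves implicit, and it checks out.
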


\begin{proof}
Maximal cliques in different components of $G$ cannot be adjacent in $C_{R}(G)$ because they have no vertices in common.
Similarly, rotunda from different components of $M$ are not adjacent in $R(M)$ because they have empty intersection.
The result follows.
\end{proof}

\begin{lemma}
\label{RCgraph-is-ROTgraph}
Let $G$ be a chordal graph.
There is a supersolvable saturated matroid $M$ such that $C_{R}(G)$ is isomorphic to $R(M)$.
\end{lemma}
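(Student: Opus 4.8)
The plan is to reduce to the $2$\dash connected case and then invoke Proposition \ref{2-con-same}. The obstruction, illustrated by the path with two edges, is that $C_R(G)$ may record an adjacency between maximal cliques meeting in a single vertex, whereas the corresponding rotunda of $M(G)$ are disjoint; this is caused precisely by low connectivity of $G$. I would first handle the reduction to connected graphs: if $H_1,\dots,H_k$ are the connected components of $G$ and each $C_R(H_i)$ is isomorphic to $R(M_i)$ for some supersolvable saturated matroid $M_i$, then Proposition \ref{conn-components} gives $C_R(G)\cong R(M_1\oplus\cdots\oplus M_k)$ and Proposition \ref{supersolvable-components} shows $M_1\oplus\cdots\oplus M_k$ is supersolvable and saturated. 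So assume $G$ is connected and nonempty.

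Now let $G^{+}$ be obtained from $G$ by adding one new vertex $w$ adjacent to every vertex of $G$. I claim $G^{+}$ is chordal and $2$\dash connected. For chordality, a cycle of $G^{+}$ with at least four vertices either avoids $w$, in which case it is a cycle of $G$ and so has a chord, or passes through $w$, in which case it has some vertex $x$ other than $w$ and other than the two neighbours of $w$ on the cycle, so that $wx$ is a chord. To see that $G^{+}$ is $2$\dash connected, observe that $w$ is universal so $G^{+}$ is connected, that deleting $w$ leaves $G$, which is connected by hypothesis, and that deleting any other vertex leaves a graph in which $w$ is still universal and hence connected; thus $G^{+}$ has no cut-vertex.

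The heart of the argument is to show that $C\mapsto C\cup\{w\}$ is an isomorphism from $C_R(G)$ to $C_R(G^{+})$. Since $w$ is universal, the maximal cliques of $G^{+}$ are exactly the sets $C\cup\{w\}$ where $C$ is a maximal clique of $G$, and this assignment is a bijection. Note that $(C_1\cup\{w\})\cap(C_2\cup\{w\})=(C_1\cap C_2)\cup\{w\}$ is always nonempty, and that any path of $G^{+}$ from a vertex of $C_1-C_2$ to a vertex of $C_2-C_1$ either uses $w$, and so meets $(C_1\cap C_2)\cup\{w\}$, or lies entirely within $G$. Hence if $C_1,C_2$ form a separating pair in $G$ then $C_1\cup\{w\}$ and $C_2\cup\{w\}$ form a separating pair in $G^{+}$. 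Conversely, suppose $C_1$ and $C_2$ are distinct maximal cliques of $G$ that do not form a separating pair. If $C_1\cap C_2\ne\emptyset$, then there is a path in $G$ avoiding $C_1\cap C_2$ from a vertex of $C_1-C_2$ to a vertex of $C_2-C_1$, and the same path witnesses that $C_1\cup\{w\}$ and $C_2\cup\{w\}$ do not form a separating pair in $G^{+}$. If $C_1\cap C_2=\emptyset$, then since $G$ is connected there is a path in $G$ of minimum length having one end in $C_1$ and the other in $C_2$; its interior is disjoint from $C_1\cup C_2$, so its ends lie in $C_1-C_2$ and $C_2-C_1$, and it avoids $(C_1\cap C_2)\cup\{w\}=\{w\}$, so again $C_1\cup\{w\}$ and $C_2\cup\{w\}$ do not form a separating pair in $G^{+}$.

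It follows that $C_R(G)\cong C_R(G^{+})$, and since $G^{+}$ is a $2$\dash connected chordal graph, Proposition \ref{2-con-same} gives $C_R(G^{+})=R(M(G^{+}))$. Finally, $M(G^{+})$ is saturated by Corollary \ref{graphic-implies-saturation} and supersolvable by Proposition \ref{chordal-iff-supersolv}, so $M=M(G^{+})$ has the required property. I expect the main obstacle to be the isomorphism $C_R(G)\cong C_R(G^{+})$, and within it the direction that no new edges are created, since that is exactly where the connectedness of $G$ is needed.
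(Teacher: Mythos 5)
Your proof is correct, and it follows the same overall skeleton as the paper (reduce to connected components via Propositions \ref{conn-components} and \ref{supersolvable-components}, then replace a connected chordal $G$ by a $2$\dash connected chordal graph with the same reduced clique graph, and finish with Proposition \ref{2-con-same}, Corollary \ref{graphic-implies-saturation} and Proposition \ref{chordal-iff-supersolv}), but the gadget you use to achieve $2$\dash connectivity is genuinely different. The paper adds a twin $v_i'$ for each cut-vertex $v_i$ (adjacent to $v_i$ and its neighbours), which has the virtue of leaving all clique intersections essentially untouched: empty intersections stay empty, so no new adjacencies can arise and the verification is purely local. Your single universal vertex $w$ is simpler to define and makes chordality and $2$\dash connectivity of $G^{+}$ immediate, but it forces every pair of maximal cliques to meet (in $w$), so you must rule out new edges of $C_R(G^{+})$ between formerly disjoint cliques; you correctly identify this as the crux and handle it with the shortest-path argument, which is exactly where connectedness of $G$ is used (and why the construction must be applied component by component, as you do). One small bonus of the paper's construction is that it preserves the actual intersections $C_1\cap C_2$ up to adding twins, which is convenient if one later cares about intersection sizes rather than just the isomorphism type of the reduced clique graph; for the statement of this lemma your argument suffices.
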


\begin{proof}
Let $H_{1},\ldots, H_{k}$ be the connected components of $G$.
If each $C_{R}(H_{i})$ is isomorphic to $R(N_{i})$ for some supersolvable saturated $N_{i}$, then Proposition \ref{conn-components} implies that $C_{R}(G)$ is isomorphic to $R(N_{1}\oplus\cdots\oplus N_{k})$.
In other words, it suffices to prove the lemma when $G$ is connected.
In this case, we will prove that $C_{R}(G)$ is isomorphic to $C_{R}(G')$, where $G'$ is a $2$\dash connected chordal graph.
Then Proposition \ref{2-con-same} shows that $C_{R}(G')=R(M(G'))$, and $M(G')$ is supersolvable and saturated by Corollary \ref{graphic-implies-saturation} and Proposition \ref{chordal-iff-supersolv}, so the result will follow.

If $G$ is $2$\dash connected, then there is nothing left to prove, so let $v_{1},v_{2},\ldots, v_{m}$ be the cut-vertices of $G$.
We produce $G'$ by introducing new vertices $v_{1}',v_{2}',\ldots, v_{m}'$ and for each $i$ making $v_{i}'$ adjacent to $v_{i}$ and all of the neighbours of $v_{i}$ in $G$.

\begin{claim}
\label{VS-lemma-claim1}
$G'$ is $2$\dash connected.
\end{claim}

\begin{proof}
Certainly $G'$ is connected.
Assume that $v$ is a cut-vertex of $G'$.
Note that for each $i$, the graph produced from $G'$ by deleting $v_{i}'$ is obtained from $G$ by adding $m-1$ new vertices and making each of them adjacent to at least one vertex in $G$.
Since $G$ is connected it follows that $G'-v_{i}'$ is connected.
Thus no vertex $v_{i}'$ is a cut-vertex of $G'$ so $v$ is not equal to $v_{i}'$ for any $i$.
Now $v$ is a vertex of $G$.
If $v\notin \{v_{1},v_{2},\ldots, v_{m}\}$ then $G-v$ is connected and $G'-v$ is obtained from the connected graph $G-v$ by adding $m$ new vertices and making each of them adjacent to at least one vertex in $G-v$.
Thus $G'-v$ is connected, which is a contradiction.
Therefore $v=v_{i}$ for some $i$.
But in $G'$ the vertices $v_{i}$ and $v_{i}'$ are adjacent to exactly the same vertices.
Therefore $G'-v_{i}$ is obtained from $G'-v_{i}'$ by relabelling $v_{i}'$ as $v_{i}$.
This means that $G'-v_{i}$ is connected, and we have a contradiction.
\end{proof}

\begin{claim}
\label{VS-lemma-claim2}
$G'$ is chordal.
\end{claim}

\begin{proof}
We rely on Proposition \ref{chordal-characterisation}.
Let $u_{1},u_{2},\ldots, u_{n}$ be a perfect elimination order of $G$.
We produce an ordering of the vertices of $G'$ by inserting each $v_{i}'$ into the order $u_{1},u_{2},\ldots, u_{n}$ immediately after $v_{i}$.
It is easy to verify that this produces a perfect elimination order for $G'$ and the result follows.
\end{proof}

We can complete the proof by showing that $C_{R}(G)$ is isomorphic to $C_{R}(G')$.
It is clear that any maximal clique of $G'$ contains one of the vertices $\{v_{i},v_{i}'\}$ if and only if it contains both.
Now we can easily verify that there is a bijective correspondence between the maximal cliques of $G$ and the maximal cliques of $G'$.
If $C$ is a maximal clique of $G$, then we obtain the corresponding maximal clique of $G'$ by adding each vertex $v_{i}'$ such that $v_{i}$ is in $C$.

Let $C_{1}$ and $C_{2}$ be distinct maximal cliques of $G$, and let $C_{1}'$ and $C_{2}'$ be the corresponding maximal cliques of $G'$.
We will prove that $C_{1}$ and $C_{2}$ are adjacent in $C_{R}(G)$ if and only if $C_{1}'$ and $C_{2}'$ are adjacent in $C_{R}(G')$.
First note that $C_{1}\cap C_{2}$ is non-empty if and only if $C_{1}'\cap C_{2}'$ is non-empty.

If $C_{1}$ and $C_{2}$ are not adjacent in $C_{R}(G)$, then either $C_{1}\cap C_{2}=\emptyset$, or $P$ is a $(C_{1}\cap C_{2})$\dash avoiding path of $G$ from a vertex of $C_{1}-C_{2}$ to a vertex in $C_{2}-C_{1}$.
In the first case $C_{1}'\cap C_{2}'=\emptyset$.
In the second case, it is obvious that $P$ is a $(C_{1}'\cap C_{2}')$\dash avoiding path of $G'$.
In either case $C_{1}'$ and $C_{2}'$ are not adjacent in $C_{R}(G')$.

Next assume that $C_{1}'$ and $C_{2}'$ are not adjacent in $C_{R}(G')$.
If $C_{1}'\cap C_{2}'=\emptyset$ then $C_{1}\cap C_{2}=\emptyset$ so we have nothing left to prove.
Therefore we will assume that $P$ is a $(C_{1}'\cap C_{2}')$\dash avoiding path in $G'$, and that $P$ joins a vertex in $C_{1}'-C_{2}'$ to a vertex in $C_{2}'-C_{1}'$.
If a vertex $v_{i}'$ appears anywhere in $P$, then we may replace it with $v_{i}$, since these two vertices have the same neighbourhoods.
Note that the resulting path is still $(C_{1}'\cap C_{2}')$\dash avoiding, and still joins a vertex of $C_{1}'-C_{2}'$ to a vertex of $C_{2}'-C_{1}'$.
Thus we can assume that $P$ is a path of $G$, and is consequently a $(C_{1}\cap C_{2})$\dash avoiding path of $G$ from a vertex of $C_{1}-C_{2}$ to a vertex of $C_{2}-C_{1}$.
This shows that $C_{1}$ and $C_{2}$ are not adjacent in $C_{R}(G)$ so the proof is complete.
\end{proof}

We have established that every reduced clique graph is isomorphic to a rotunda graph.
Next we start moving towards proving the converse.

\begin{definition}
\label{compliant-graph}
Let $M$ be a connected supersolvable and saturated matroid, and let $G$ be a $2$\dash connected chordal graph.
Assume that $\theta$ is a function from $E(M)$ to the powerset of $V(G)$.
If $U$ is a subset of vertices in $G$, then let $\theta^{-1}(U)$ be $\{x\in E(M)\colon \theta(x)\subseteq U\}$.
For any subset $R\subseteq E(M)$, let $\theta(R)$ stand for $\cup_{x\in R}\theta(x)$.
Thus we can think of $\theta$ as being a function from $\mathcal{P}(E(M))$ to $\mathcal{P}(V(G))$ such that $R\subseteq R'$ if and only if $\theta(R)\subseteq \theta(R')$.
Assume that the following properties hold:
\begin{enumerate}[label = \textup{(\roman*)}]
\item $|\theta(x)|= 2$ for every $x\in E(M)$, and
\item for any vertex $v\in V(G)$ there exists exactly one element $x\in E(M)$ such that $v$ is in $\theta(x)$.
\item if $R$ is a non-empty round flat of $M$, then $\theta(R)$ is a clique,
\item if $F$ is a modular flat of $M$ and $U$ is a union of connected components of $G-\theta(F)$, then $F\cup \theta^{-1}(U)$ is a modular flat of $M$, and
\item the restriction of $\theta$ to $\mathcal{R}(M)$ is a bijection from $\mathcal{R}(M)$ to the maximal cliques of $C_{R}(G)$ and this bijection is an isomorphism between $R(M)$ and $C_{R}(G)$.
\end{enumerate}
If all these conditions hold, then we will say that $(G,\theta)$ is \emph{compliant} with $M$.
\end{definition}
 
\begin{lemma}
\label{ROTgraph-is-RCgraph}
Let $M$ be a connected supersolvable and saturated matroid.
There exists a $2$\dash connected chordal graph $G$ and a function $\theta\colon E(M)\to\mathcal{P}(V(G))$ such that $(G,\theta)$ is compliant with $M$.
\end{lemma}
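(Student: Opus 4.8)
The plan is to induct on $r(M)$. For the base case $r(M)\le 1$ the matroid $M$ is a single loop or a single parallel class; here I would take $G$ to be the complete graph on $2|E(M)|$ vertices and let $\theta$ send the elements of $M$ to the parts of an arbitrary partition of $V(G)$ into pairs. Since every subset of a complete graph induces a clique, and since $\emptyset$ and $E(M)$ are the only modular flats of $M$ while the unique rotunda is $E(M)$, each clause of Definition~\ref{compliant-graph} is checked directly.

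For the inductive step, suppose $r(M)\ge 2$ and choose a modular hyperplane $H$ of $M$ with $M|H$ supersolvable. Then $M|H$ is connected by Proposition~\ref{mod-hyp-connected}, and saturated by Proposition~\ref{saturated-restriction}, so the inductive hypothesis supplies a $2$\dash connected chordal graph $G_{0}$ and a function $\theta_{0}$ with $(G_{0},\theta_{0})$ compliant with $M|H$. Let $C^{*}=E(M)-H$ and $R=\cl(C^{*})$; by Proposition~\ref{cocircuit-closure-rotunda}, $R$ is the unique rotunda of $M$ not contained in $H$, and by Proposition~\ref{mod-hyp-round-intersect}, $R\cap H$ is a round flat of $M|H$. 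A short argument using the connectivity of $M$ (any circuit through an element of $C^{*}$ and an element of $H$ would otherwise force an element of $C^{*}$ into $\cl(H)=H$) shows $r(R)\ge 2$, whence $r(R\cap H)=r(R)-1\ge 1$ and so $R\cap H\ne\emptyset$. Set $S_{0}=\theta_{0}(R\cap H)$; by clause~(iii) for $M|H$ this is a clique of $G_{0}$, and $|S_{0}|=2|R\cap H|\ge 2$. I would then form $G$ from $G_{0}$ by adding, for each $x\in C^{*}$, a new pair of vertices $w_{x},w_{x}'$, making $W:=\{w_{x},w_{x}'\colon x\in C^{*}\}$ a clique with every vertex of $W$ adjacent to every vertex of $S_{0}$, and extend $\theta_{0}$ to $\theta$ by $\theta(x)=\{w_{x},w_{x}'\}$ for $x\in C^{*}$.

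Clauses~(i) and~(ii) of Definition~\ref{compliant-graph} hold by construction. Placing the vertices of $W$ before a perfect elimination order of $G_{0}$ gives a perfect elimination order of $G$, so $G$ is chordal by Proposition~\ref{chordal-characterisation}; and since $G_{0}$ is $2$\dash connected and $|S_{0}|\ge 2$, gluing the clique $W$ onto $S_{0}$ leaves $G$ $2$\dash connected. For clause~(iii): a round flat $F$ of $M$ is either contained in $H$, where it is a round flat of $M|H$ and $\theta(F)=\theta_{0}(F)$ is a clique, or, by Proposition~\ref{prop2}, contained in $R$, where $\theta(F)\subseteq S_{0}\cup W=\theta(R)$, which is a clique. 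For clause~(v): Proposition~\ref{mod-hyp-rot-graph} shows $\mathcal{R}(M)$ is obtained from $\mathcal{R}(M|H)$ by either replacing $R\cap H$ by $R$ or adjoining $R$, and in $G$ the maximal cliques are correspondingly obtained from those of $G_{0}$ by replacing $S_{0}$ by $S_{0}\cup W$ or adjoining $S_{0}\cup W$; using the edge descriptions in Proposition~\ref{mod-hyp-rot-graph} together with the observation that any path of $G$ meeting $W$ can be rerouted through the clique $S_{0}$, one checks that the resulting bijection $\mathcal{R}(M)\to\{\text{maximal cliques of }G\}$ is an isomorphism $R(M)\to C_{R}(G)$.

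The main obstacle is clause~(iv): for a modular flat $F$ of $M$ and a union $U$ of connected components of $G-\theta(F)$, one must show $F\cup\theta^{-1}(U)$ is a modular flat of $M$. By Proposition~\ref{modular-intersection}, $F\cap H$ is a modular flat of $M|H$. The components of $G-\theta(F)$ are read off from those of $G_{0}-\theta_{0}(F\cap H)$: the set $W-\theta(F\cap C^{*})$ is either a component on its own (when $R\cap H\subseteq F$) or merges into the unique component meeting $S_{0}$ (when $R\cap H\not\subseteq F$). Using the inductive form of clause~(iv) for $M|H$ to deal with the part of $U$ lying in $G_{0}$, and then lifting modularity from $M|H$ to $M$ via Propositions~\ref{mod-hyp-vert-sep} and~\ref{prop3} (together with the fact, from Proposition~\ref{round-projections}, that the projections onto $H$ of pairs of elements of $C^{*}$ lie in $R\cap H$ and are spanned by $C^{*}$), one verifies in each case that $F\cup\theta^{-1}(U)$ is modular in $M$. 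Correctly matching the components of $G-\theta(F)$ that meet $W$ with the flats of $M$ obtained by appending all or part of $C^{*}$ to a modular flat of $M|H$, and handling unions of several components, is the delicate step, and is where I expect the bulk of the argument to lie.
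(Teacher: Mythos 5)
Your construction is exactly the paper's: induct on rank, take a modular hyperplane $H$ with compliant pair $(G_{0},\theta_{0})$ for $M|H$, and glue a clique of $2|C^{*}|$ new vertices onto the clique $\theta_{0}(R\cap H)$, extending $\theta$ by an arbitrary pairing of the new vertices with the elements of $C^{*}$. The base case, chordality, $2$\dash connectivity, clause (iii), and your outline for clause (iv) all match the paper's argument in substance (the paper's verification of (iv) runs through Proposition \ref{mod-hyp-sep-extends} and a strong circuit-elimination argument patterned on Proposition \ref{mod-hyp-vert-sep}, which is consistent with the toolkit you name).

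The genuine gap is in clause (v). Your plan is to read the edges off Proposition \ref{mod-hyp-rot-graph} and then fix up the graph side by rerouting paths through the clique $S_{0}$. That works when $R\cap H$ is a rotunda of $M|H$ (the proposition's case (a), where $R(M|H)$ is $R(M)$ with $R$ relabelled), and it works for adjacencies between two rotunda of $M$ other than $R$. But in case (b), where $R\cap H$ is properly contained in a rotunda of $M|H$, Proposition \ref{mod-hyp-rot-graph} only says that $R(M|H)$ is $R(M)$ with the vertex $R$ deleted: it gives no description whatsoever of which rotunda are adjacent to $R$ in $R(M)$, and likewise $C_{R}(G_{0})$ does not determine which maximal cliques are adjacent to the new clique $S_{0}\cup W$ in $C_{R}(G)$. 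Path rerouting cannot supply the matroid half of this equivalence, because showing $Z\sim R$ in $R(M)$ requires exhibiting a modular cover of $M$ whose intersection is exactly $Z\cap R$. This is where the paper spends the bulk of its proof (its ``Case 2''): one takes a rotunda $Z_{0}$ of $M|H$ containing $R\cap H$, proves under the relevant adjacency hypotheses that $Z\cap Z_{0}=Z\cap R$, transfers modular covers upward via Proposition \ref{mod-hyp-vert-sep} (or invokes Proposition \ref{supsol-mod-sep} when $R\cap H\subseteq Z$) and downward via Propositions \ref{prop3} and \ref{round-separations} together with roundness of $Z_{0}$, and separately relates adjacency of $\theta(Z)$ with $S_{0}\cup W$ in $C_{R}(G)$ to adjacency of $\theta(Z)$ with $\theta(Z_{0})$ in $C_{R}(G_{0})$. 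None of this is ``one checks''; as written, your verification of (v) would fail precisely at the edges incident with the new rotunda in case (b), so you have located the delicacy in (iv) but omitted the other, heavier half of the argument.
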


\begin{proof}
The proof is a straightforward induction, although the technical details are require some work.
If $M$ has rank at most one, then we can simply make $G$ a clique of the appropriate size.
Now we are going to choose $C^{*}$ to be the complement of a modular hyperplane, $H$.
Then inductively $M|H$ has a compliant graph.
The intersection of $H$ with $\cl(C^{*})$ is a round flat, and therefore corresponds to a clique.
We create a new maximal clique by adding new vertices and making them adjacent to each other and to the clique corresponding to $H\cap\cl(C^{*})$.
The rest of the proof involves nothing more than checking that this construction does indeed satisfy the conditions for compliance.

To implement this strategy, we let $M$ be a supersolvable saturated matroid.
Assume $r(M)\leq 1$.
We can easily see that the only rotunda of $M$ is $E(M)$ itself.
We let $G$ be isomorphic to $K_{2|E(M)|}$, and we consider an arbitrary partition of $V(G)$ into blocks of size two.
We then set $\theta$ to be an arbitrary bijection from $E(M)$ to the blocks of the partition.
It is not hard to verify that $(G,\theta)$ is compliant with $M$.
Therefore we assume that $r(M)> 1$.

Let $H$ be a modular hyperplane of $M$ such that $M|H$ is supersolvable.
Then $M|H$ is also saturated.
Proposition \ref{mod-hyp-connected} says that $M|H$ is connected.
Therefore we can apply the obvious inductive hypothesis and let $G'$ be a $2$\dash connected chordal graph with a function $\theta'\colon H \to \mathcal{P}(V(G'))$ such that $(G',\theta')$ is compliant with $M|H$.

Let $C^{*}$ be the complementary cocircuit of $H$, and let $R$ be the closure of $C^{*}$.
Proposition \ref{cocircuit-closure-rotunda} says that $R$ is a rotunda, and furthermore it is the only rotunda of $M$ that is not contained in $H$.
Certainly $C^{*}$ is non-empty, and $r(H)=r(M)-1>0$, so $H$ is non-empty also.
But $(H,C^{*})$ is not a separation of $M$, since $M$ is connected.
As $H$ is modular, we deduce that
\[
r(R\cap H) = r(\cl(C^{*})\cap H) = r(C^{*})+r(H)-r(M)>0.
\]
Therefore $R\cap H$ is non-empty and Proposition \ref{mod-hyp-round-intersect} tells us that $R\cap H$ is round.

Let $W$ be $\theta'(R\cap H)$.
Since $(G',\theta')$ is compliant with $M|H$, we see that $W$ is the set of vertices of a clique in $G'$.
Note also that $|W|= 2|R\cap H|\geq 2$.
We produce $G$ from $G'$ by adding $Y$, a set of $2|C^{*}|$ new vertices, and making each of them adjacent to all the vertices of $W$.
Note that $W\cup Y$ is a maximal clique of $G$ and $G'=G-Y$.
Because $W$ has at least two vertices it is easy to see that $G$ is $2$\dash connected.
The neighbours of any vertex in $Y$ form a clique in $G$.
Therefore we can construct a perfect elimination order for $G$ by prepending the vertices of $Y$ to a perfect elimination order for $G'$.
It follows that $G$ is chordal.

Consider an arbitrary partition of $Y$ into pairs of vertices, and let $\phi$ be an arbitrary bijection from $C^{*}$ to the blocks of this partition.
Then we define $\theta$ to be the union of $\theta'$ and $\phi$.
Note that $|\theta(x)|=2$ for any $x\in E(M)$, and for any vertex $v$ of $G$, there is exactly one element $x\in E(M)$ such that $v$ is in $\theta(x)$.
Therefore the remainder of the proof consists in showing that $\theta$ satisfies conditions (iii), (iv), and (v) in Definition \ref{compliant-graph}.

Proposition \ref{prop2} tells us that if $Z$ is a round flat of $M$ then either $Z\subseteq H$ or $Z\subseteq R$.
In the former case, $Z$ is a round flat of $M|H$, and $\theta(Z)=\theta'(Z)$ is a clique of $G$, since $\theta'$ satisfies (iii).
In the latter case $\theta(Z)$ is a subset of $W\cup Y$, and again $\theta(Z)$ is a clique of $G$.
So condition (iii) holds for $(G,\theta)$.

\begin{claim}
Condition \textup{(iv)} in \textup{Definition \ref{compliant-graph}} holds for $(G,\theta)$.
\end{claim}

\begin{proof}
Let $F$ be a modular flat of $M$, and let $U$ be a union of connected components of $G-\theta(F)$.
Let $D$ be $F\cup \theta^{-1}(U)$.
Thus our aim is to show that $D$ is a flat of $M$.
Assume that $U$ is the empty union.
In this case $D = F\cup\theta^{-1}(U) = F$ and since $F$ is a modular flat there is nothing left to prove.
Therefore we assume that $U$ contains at least one connected component of $G-\theta(F)$.

Assume that $D$ is disjoint with $C^{*}$.
This means that $\theta(F)\cup U$ is disjoint with $Y$.
Thus $F$ is a modular flat of $M|H$.
If $U$ is not a union of connected components in $G'-\theta'(F)$ then there is a connected component of this graph that contains vertices $u\in U$ and $v\notin U$.
There is a path of $G'-\theta'(F) = G-(\theta(F)\cup Y)$ from $u$ to $v$.
Hence $u$ and $v$ are in the same component of $G-\theta(F)$.
This contradicts the fact that $U$ is a union of components in this graph.
Hence $U$ is a union of components of $G'-\theta'(F)$, so we can apply the inductive assumption and see that $D=F\cup(\theta')^{-1}(U) = F\cup\theta^{-1}(U)$ is a modular flat of $M|H$.
Therefore $D$ is a modular flat of $M$ and we are done.
Hence we assume that $D$ contains at least one element of $C^{*}$.

Now $\theta(F)\cup U$ contains at least two vertices from $Y$.
Since any such vertex is adjacent to every vertex in $W\cup Y$, and $U$ is a non-empty union of connected components, it now follows that $\theta(F)\cup U$ contains $W\cup Y$.
Note that $D$ contains $\theta^{-1}(W\cup Y) = R=\cl(C^{*})$.

Assume that $U-Y$ is not a union of connected components in $G'-\theta(F\cap H)$.
Then there is a connected component of $G'-\theta(F\cap H)$ that contains vertices $u\in U-Y$ and $v\notin U-Y$.
There is a path from $u$ to $v$ in $G'-\theta(F\cap H) = G - (\theta(F)\cup Y)$.
Thus $u$ and $v$ are in the same connected component of $G-\theta(F)$.
This means that $u$ and $v$ are both in $U$, since $U$ is a union of connected components in this graph.
Since $v$ is not in $U-Y$ this means that $v$ is in $Y$.
But this is impossible, since $v$ is a vertex of $G'$, which is equal to $G-Y$.
This shows that $U-Y$ is a union of connected components in $G'-\theta(F\cap H)$.

We note that $F\cap H$ is a modular flat of $M|H$ since both $F$ and $H$ are modular in $M$.
The inductive hypothesis now tells us that \[(F\cap H)\cup \theta^{-1}(U-Y)\] is a modular flat of $M|H$.
Let this flat be $D'$.
Note that because $C^{*}\subseteq D$ we have
\[
D = F\cup \theta^{-1}(U) = (F\cap H)\cup\theta^{-1}(U-Y)\cup C^{*} = D'\cup C^{*}.
\]
Let $P$ be the union $\cup P_{H}(x,y)$, where $x$ and $y$ range over all distinct rank-one flats contained in $C^{*}$.
Thus $P$ is a subset of $R\cap H\subseteq D\cap H = D'$.
Note that
\[\cl(D) = (\cl(D) \cap H) \cup C^{*} = (\cl(D'\cup C^{*})\cap H)\cup C^{*}.\]
Now we apply Proposition \ref{mod-hyp-sep-extends}.
Since $P\subseteq D'\subseteq H$ we see that
\[(\cl(D'\cup C^{*})\cap H)\cup C^{*}= \cl(D')\cup C^{*} = D'\cup C^{*} = D.\]
Thus $D$ is a flat of $M$.

Assume that $D$ is not a modular flat of $M$, and let $F'$ be a flat of $M$ that is disjoint with $D$, chosen so that $C\subseteq D\cup F'$ is a circuit that contains elements of both $D$ and $F'$.
Choose $C$ so that $|C\cap C^{*}|$ is as small as possible.
Exactly as in the proof of Proposition \ref{mod-hyp-vert-sep} we can prove that $C\cap C^{*}$ contains distinct elements $x$ and $y$.
We choose $p$ to be an element in $P_{H}(x,y)$, and we perform strong circuit elimination on $C$ and $\{x,y,p\}$.
In this way we find a circuit contained in $D\cup F'$ that contains elements of both sets, and contains fewer elements of $C^{*}$ than $C$.
This contradiction shows that $D$ is a modular flat of $M$ so condition (iv) holds.
\end{proof}

\begin{claim}
\label{RC-ROT-claim1}
The restriction of $\theta$ to $\mathcal{R}(M)$ is a bijection between $\mathcal{R}(M)$ and the maximal cliques of $G$.
\end{claim}

\begin{proof}
The inductive hypothesis means that $\theta'$ induces a bijection between the rotunda of $M|H$ and the maximal cliques of $G'$.
First assume that $R\cap H$ is a rotunda of $M|H$, so that $W=\theta'(R\cap H)$ is a maximal clique of $G'$.
Now Proposition \ref{mod-hyp-rot-graph} shows that the rotunda of $M$ are the rotunda of $M|H$, except that $R\cap H$ has been replaced by $R$.
It is easy to see that he maximal cliques of $G$ are the maximal cliques of $G'$, except that $W$ has been replaced by $W\cup Y$.
We observe that $\theta(R) = W\cup Y$ and now it follows that $\theta|_{\mathcal{R}(M)}$ is a bijection between the rotunda of $M$ and the maximal cliques of $G$.

Next we assume that $R\cap H$ is not a rotunda of $M|H$.
Proposition \ref{mod-hyp-rot-graph} implies that every rotunda of $M|H$ is also a rotunda of $M$.
Furthermore $R$ is the only rotunda of $M$ that is not a rotunda of $M|H$.
Because $R\cap H$ is not a rotunda of $M|H$, we can let $Z$ be a rotunda of $M|H$ that properly contains $R\cap H$.
Now $W=\theta'(R\cap H)$ is properly contained in $\theta'(Z)$.
Since $Z$ is round, we see that $\theta(Z)=\theta'(Z)$ is a clique that properly contains $W$.
Therefore $W$ is not a maximal clique of $G'$.
Now it is easy to see that every maximal clique of $G'$ is a maximal clique of $G$, and that $W\cup Y$ is the only maximal clique of $G$ that is not a maximal clique of $G'$.
The claim follows.
\end{proof}

We can complete the proof of Lemma \ref{ROTgraph-is-RCgraph} by proving that the restriction of $\theta$ to $\mathcal{R}(M)$ is an isomorphism from $R(M)$ to $C_{R}(G)$.
Let $Z$ and $Z'$ be distinct rotunda of $M$.
We will show that they are adjacent in $R(M)$ if and only if $\theta(Z)$ and $\theta(Z')$ are adjacent in $C_{R}(G)$.

\noindent\textbf{Case 1.} \emph{Neither $Z$ nor $Z'$ is equal to $R$}.
In this case both $Z$ and $Z'$ are rotunda of $M|H$, and $\theta(Z)$ and $\theta(Z')$ are maximal cliques of $G'$.
Assume that $\theta(Z)$ and $\theta(Z')$ are adjacent in $C_{R}(G)$.
Then these maximal cliques have at least one vertex in common, and there is no $(\theta(Z)\cap\theta(Z'))$\dash avoiding path in $G$ from a vertex of $\theta(Z)-\theta(Z')$ to a vertex of $\theta(Z')-\theta(Z)$.
Exactly the same statements apply to $\theta'(Z)$ and $\theta'(Z')$ in $G'$, so $\theta'(Z)$ and $\theta'(Z')$ are adjacent in $C_{R}(G')$.
The inductive assumption implies that $Z$ and $Z'$ are adjacent in $R(M|H)$.
Proposition \ref{mod-hyp-rot-graph} now implies that they are also adjacent in $R(M)$.

For the converse, assume that $Z$ and $Z'$ are adjacent in $R(M)$, and let $(F,F')$ be a modular cover of $M$ that certifies the adjacency.
We assume that $Z\subseteq F$ and $Z'\subseteq F'$.
Let us assume that both $F\cap C^{*}$ and $F'\cap C^{*}$ are non-empty.
No element of $F\cap C^{*}$ is in $F'$, because any such element would be in $F\cap F'=Z\cap Z'$, and this is not possible since $Z$ and $Z'$ are subsets of $H=E(M)-C^{*}$.
Symmetrically, no element of $F'\cap C^{*}$ is in $F$.
So $F\cap R$ does not contain any element of $F'\cap C^{*}$ and $F'\cap R$ does not contain any element of $F\cap C^{*}$.
This shows that $(F\cap R,F'\cap R)$ is a vertical cover of $R$, which is impossible as $R$ is a round flat.
Therefore either $F\cap C^{*}=\emptyset$ or $F'\cap C^{*} = \emptyset$.
We assume the latter, so $C^{*}$ is a subset of $F$ and $F'$ is a subset of $H$.

Proposition \ref{round-separations} says that $F'$ does not contain $Z$.
It therefore does not contain $H$, so we can apply Proposition \ref{prop3} and deduce that \[(F\cap H,F'\cap H) = (F\cap H,F')\] is a modular cover of $M|H$.
Note that
\[
 (F\cap H)\cap F' = F\cap F' = Z\cap Z'
\]
so $Z$ and $Z'$ are adjacent in $R(M|H)$.
By the inductive hypothesis, $\theta'(Z)=\theta(Z)$ and $\theta'(Z')=\theta(Z')$ are adjacent in $C_{R}(G')$.
Since $Z$ and $Z'$ are rotunda of $M$, neither is equal to $R\cap H$, which is properly contained in $R$.
Therefore neither $\theta(Z)$ nor $\theta(Z')$ is equal to $W$.
Because $\theta(Z)$ and $\theta(Z')$ are adjacent in $C_{R}(G')$ they have a non-empty intersection.

Assume that $\theta(Z)$ and $\theta(Z')$ are not adjacent in $C_{R}(G)$.
Let $P$ be a $(\theta(Z)\cap \theta(Z'))$\dash avoiding path of $G$ from a vertex $a\in \theta(Z)-\theta(Z')$ to a vertex $b\in \theta(Z')-\theta(Z)$.
Because no such path can exist in $G'=G-Y$, it follows that $P$ contains a vertex in $Y$.
Let $y$ and $y'$, respectively, be the first and last vertices of $P$ that are in $Y$.
Note that $y$ and $y'$ are not equal to $a$ or $b$, which are vertices of $G'$.
Let $w$ be the neighbour of $y$ in the subpath of $P$ from $y$ to $a$.
Similarly let $w'$ be the neighbour of $y'$ in the subpath from $y'$ to $b$.
Because $w$ and $w'$ are adjacent to vertices in $Y$, but are not in $Y$, they must be in $W$.
Thus $w$ and $w'$ are adjacent, so there is a path of $G'$ from $a$ to $b$ that avoids any vertex in $\theta(Z)\cap \theta(Z')$.
This is a contradiction, so we conclude that $\theta(Z)$ and $\theta(Z')$ are adjacent in $R(M)$.

We have now completed the case that neither $Z$ nor $Z'$ is equal to $R$.

\noindent\textbf{Case 2.} \emph{One of $Z$ and $Z'$ is equal to $R$.}
We let $Z$ be a rotunda of $M$ that is distinct from $R$, and we will prove that $Z$ and $R$ are adjacent in $R(M)$ if and only if $\theta(Z)$ and $\theta(R)=W\cup Y$ are adjacent in $C_{R}(G)$.
Observe that $Z$ is contained in $H$ by Proposition \ref{cocircuit-closure-rotunda}.

First assume that $\theta(Z)$ and $\theta(R)$ are adjacent in $C_{R}(G)$.
Because $\theta$ sends distinct elements of $E(M)$ to distinct pairs of vertices, it cannot be the case that $Z\cap R=\emptyset$, or else $\theta(Z)$ and $\theta(R)$ would have no vertices in common, contradicting their adjacency in $C_{R}(G)$.
Thus $Z$ and $R$ are non-disjoint.

Assume $Z$ contains $R\cap H$.
If $C^{*}$ is spanning in $M$, then $R\cap H = H$, so $\theta(H)=W$ is a clique.
In this case $G=W\cup Y$ is a clique, but we have assumed that $M$ has at least two distinct rotunda, so $G$ has at least two distinct maximal cliques by \ref{RC-ROT-claim1}.
Thus $C^{*}$ is not spanning.
Proposition \ref{supsol-mod-sep} says that $(H,R)$ is a modular cover of $M$.
Now $R\cap H = R\cap Z$ so $(H,R)$ certifies that $Z$ and $R$ are adjacent in $R(M)$ and we have nothing left to prove.
Therefore we will assume that $Z$ does not contain $R\cap H$.
Hence $Z\cap R$ is a proper and non-empty subset of $R\cap H$.
It follows that $\theta(Z)$ contains some, but not all, of the vertices of $W$.

By Proposition \ref{mod-hyp-round-intersect} we know that $R\cap H$ is a round flat of $M|H$.
Let $Z_{0}$ be a rotunda of $M|H$ that contains $R\cap H$.
Thus $Z_{0}$ is not equal to $Z$, but it may be equal to $R\cap H$.
Now $\theta'(Z_{0})=\theta(Z_{0})$ is a maximal clique of $G'$ that contains $W$.
Assume that $\theta(Z)$ and $\theta(Z_{0})$ are not adjacent in $C_{R}(G')$.
Because these cliques have at least one vertex of $W$ in common, we can let $P$ be a $(\theta(Z)\cap\theta(Z_{0}))$\dash avoiding path of $G'$ from a vertex $a\in \theta(Z)-\theta(Z_{0})$ to a vertex $b\in \theta(Z_{0})-\theta(Z)$.
Note that $P$ contains no vertex of $\theta(Z)\cap\theta(R)$.
But $P$ is also a path of $G$, and $b$ is adjacent to any vertex of $W-\theta(Z)$.
Thus, if necessary, we can adjoin an edge to $P$ from $b$ to a vertex of $W-\theta(Z)$, and certify that $\theta(Z)$ and $\theta(R)$ are not adjacent in $C_{R}(G)$, contrary to hypothesis.
Therefore $\theta(Z)$ and $\theta(Z_{0})$ are adjacent in $C_{R}(G')$, so by induction $Z$ and $Z_{0}$ are adjacent in $R(M|H)$.

Because $Z_{0}$ contains $R\cap H$, the intersection of $Z$ and $Z_{0}$ contains $Z\cap R$.
Assume this containment is proper, and let $e$ be an element of $Z\cap Z_{0}$ that is not in $Z\cap R$.
Let $v$ be a vertex in $\theta(e)$.
Thus $v$ is in $\theta(Z)-\theta(R)$.
Choose $w$, an arbitrary vertex in $W-\theta(Z)$.
Because $v$ is in $\theta(Z_{0})$, which contains $W$, it follows that $v$ and $w$ are adjacent.
Since $w$ is in $\theta(R)-\theta(Z)$, we now see that $\theta(Z)$ and $\theta(R)$ are not adjacent in $C_{R}(G)$, contrary to hypothesis.
We conclude that $Z\cap Z_{0} = Z\cap R$.

Since $Z$ and $Z_{0}$ are adjacent in $R(M|H)$, we can let $(F,F')$ be a modular cover of $M|H$ that certifies this adjacency, where $Z\subseteq F$ and $Z_{0}\subseteq F'$.
Because $Z_{0}$ contains $R\cap H$, it follows that $F'$ contains $\cup P_{H}(x,y)$, where $x$ and $y$ range over distinct rank-one flats contained in $C^{*}$.
Proposition \ref{mod-hyp-vert-sep} says that $(F,F'\cup C^{*})$ is a modular cover of $M$.
Certainly $Z\subseteq F$ and $Z_{0}\subseteq F'\cup C^{*}$.
Furthermore, 
\[
F\cap (F'\cup C^{*})=F\cap F' = Z\cap Z_{0} = Z\cap R.
\]
Thus $(F,F'\cup C^{*})$ certifies that $Z$ and $R$ are adjacent in $R(M)$, exactly as desired.

For the converse, we assume that $Z$ and $R$ are adjacent in $R(M)$.
Thus $Z\cap R$ is non-empty.
Assume that $Z$ contains $R\cap H$.
Then $\theta(Z)$ contains $\theta(R\cap H) = W$, so $\theta(Z)\cap \theta(R) = W$.
In $G-W$ there is no path from a vertex of $\theta(R)-\theta(Z) = Y$ to a vertex not in $Y$, and in particular there is no path to a vertex in $\theta(Z)-\theta(R)$.
So in this case $\theta(Z)$ and $\theta(R)$ are adjacent in $C_{R}(G)$ and we have nothing left to prove.
Therefore we will assume that $Z$ does not contain $R\cap H$.
Hence $Z\cap R$ is a non-empty proper subset of $R\cap H$.
Since $R\cap H$ is a round flat of $M|H$ by Proposition \ref{mod-hyp-round-intersect}, we can let $Z_{0}$ be a rotunda of $M|H$ that contains $R\cap H$.
Thus $Z_{0}$ may be equal to $R\cap H$, but it is not equal to $Z$.

Let $(F,F')$ be a modular cover of $M$ that certifies the adjacency of $R$ and $Z$ in $R(M)$, where $R\subseteq F$ and $Z\subseteq F'$.
Because $F\cap F' = R\cap Z$ and $Z$ is contained in $H$ it follows that $F'$ is contained in $H$.
If $F'=H$, then $F\cap F'$ contains $R\cap H$, which properly contains $Z\cap R$.
This contradicts $F\cap F' = R\cap Z$, so $F'$ does not contain $H$.
By applying Proposition \ref{prop3}, we see that $(F\cap H,F'\cap H)=(F\cap H, F')$ is a modular cover of $M|H$.

Because $Z_{0}$ is round, one of $F\cap Z_{0}$ and $F'\cap Z_{0}$ is not a proper flat of $M|Z_{0}$.
That is, $Z_{0}$ is contained in either $F$ or $F'$.
Assume $Z_{0}$ is contained in $F'$.
Then $R\cap H\subseteq Z_{0}\subseteq F'$ and $R\subseteq F$ so $F\cap F' $ contains $R\cap H$.
This is a contradiction as $F\cap F' = R\cap Z$, which is a non-empty proper subset of $R\cap H$.
Therefore $Z_{0}$ is contained in $F$.
We observe that
\[
(F\cap H)\cap F' = (F\cap F')\cap H = (R\cap Z)\cap H = R\cap Z = F\cap F' \supseteq Z_{0}\cap Z.
\]
Assume that $F\cap F'$ properly contains $Z_{0}\cap Z$ and let $e$ be an element of $(F\cap F')-(Z_{0}\cap Z)$.
Since $F\cap F' = R\cap Z$ it follows that $e$ is in $Z$.
But we also have
\[
e\in F\cap F' = R\cap Z\subset R\cap H\subseteq Z_{0}.
\]
Thus $e$ is in $Z_{0}\cap Z$ after all and we have a contradiction.
Thus $(F\cap H)\cap F' = Z_{0}\cap Z = R\cap Z$ and the modular cover $(F\cap H,F')$ of $M|H$ certifies that $Z_{0}$ and $Z$ are adjacent in $R(M|H)$.
Induction now tells us that $\theta(Z_{0})$ and $\theta(Z)$ are adjacent in $C_{R}(G')$.

Assume that $\theta(Z)$ and $\theta(R)=W\cup Y$ are not adjacent in $C_{R}(G)$.
These cliques certainly have common vertices, so we can let $P$ be a path from $a\in \theta(Z)-\theta(R)$ to $b\in\theta(R)-\theta(Z)$ such that $P$ contains no vertex of $\theta(Z)\cap\theta(R) = \theta(Z)\cap\theta(Z_{0})$.
If $P$ is a path of $G'$ then it certifies that $\theta(Z)$ and $\theta(Z_{0})$ are not adjacent in $R(M|H)$, contrary to our earlier conclusion.
Therefore $P$ contains at least one vertex in $Y$.
Consider the maximal subpath of $P$ from $a$ to vertex not in $Y$, and let this vertex be $w$.
Note that $w$ is in $W-\theta(Z)\subseteq \theta(Z_{0})-\theta(Z)$.
So this subpath certifies that $\theta(Z)$ and $\theta(Z_{0})$ are not adjacent in $R(M|H)$, and we have another contradiction that completes the proof.
\end{proof}

\begin{proof}[Proof of \textup{Theorem \ref{ROTgraph-same-as-RCgraph}}]
Lemma \ref{RCgraph-is-ROTgraph} shows that every reduced clique graph is isomorphic to a rotunda graph.
On the other hand, if $M$ is a supersolvable saturated matroid with connected components $M_{1},\ldots, M_{n}$, then $R(M)$ is the disjoint union of $R(M_{1}),\ldots, R(M_{n})$, as we observed in Proposition \ref{conn-components}.
Lemma \ref{ROTgraph-is-RCgraph} shows that each $R(M_{i})$ is isomorphic to $C_{R}(G_{i})$ for some $2$\dash connected chordal graph $G_{i}$.
If $G$ is the disjoint union of $G_{1},\ldots, G_{n}$, then $C_{R}(G)$ is the disjoint union of $C_{R}(G_{1}),\ldots, C_{R}(G_{n})$, and is thus isomorphic to $R(M)$.
So any rotunda graph is isomorphic to a reduced clique graph.
\end{proof}

\begin{lemma}
\label{ROTgraph-connected}
Let $M$ be a supersolvable saturated matroid.
Then $R(M)$ is connected if and only if $M$ is connected.
\end{lemma}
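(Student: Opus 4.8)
The plan is to treat the two implications separately, proving the non-trivial one (that $M$ connected implies $R(M)$ connected) by induction on $r(M)$, using the description of rotunda graphs under restriction to a modular hyperplane supplied by Proposition \ref{mod-hyp-rot-graph}. First suppose $M$ is not connected, say with connected components $N_{1},\dots,N_{k}$ where $k\ge 2$. Each $N_{i}$ is a non-empty matroid and hence has at least one rotunda: for instance its rank-zero flat is round, so it is contained in a maximal round flat. By Proposition \ref{conn-components}, $R(M)$ is the disjoint union of $R(N_{1}),\dots,R(N_{k})$, and each of these graphs has at least one vertex, so $R(M)$ is disconnected. This proves that $R(M)$ connected implies $M$ connected.

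For the converse, assume $M$ is connected and induct on $r(M)$. If $r(M)\le 1$ then $E(M)$ is the unique flat of rank $r(M)$ and it is round, so it is the unique rotunda of $M$ and $R(M)$ is a single vertex, hence connected. Now let $r(M)\ge 2$ and choose a modular hyperplane $H$ with $M|H$ supersolvable. Then $M|H$ is saturated by Proposition \ref{saturated-restriction}, connected by Proposition \ref{mod-hyp-connected}, and has rank $r(M)-1$, so by the inductive hypothesis $R(M|H)$ is connected. Write $C^{*}$ for the complement of $H$ and $R=\cl(C^{*})$. If $C^{*}$ is spanning then $R=E(M)$, which by Proposition \ref{cocircuit-closure-rotunda} is a rotunda and is then necessarily the only one, so $R(M)$ is a single vertex and we are done. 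Hence we may assume $C^{*}$ is non-spanning and apply Proposition \ref{mod-hyp-rot-graph}. In its case (a), $R(M|H)$ is obtained from $R(M)$ by relabelling one vertex, so $R(M)$ is connected since $R(M|H)$ is. In its case (b), $R(M|H)$ is obtained from $R(M)$ by deleting the vertex $R$, so it suffices to show that $R$ is adjacent in $R(M)$ to some vertex of $R(M|H)$.

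So assume we are in case (b). Then $R\cap H$ is properly contained in some rotunda $Z_{0}$ of $M|H$, and in this case $\mathcal{R}(M)=\mathcal{R}(M|H)\cup\{R\}$, so $Z_{0}$ is also a vertex of $R(M)$. Since $C^{*}$ is non-spanning and $M$ is saturated, Proposition \ref{supsol-mod-sep} tells us that $(H,R)$ is a modular cover of $M$. Because $H$ is modular and $(H,C^{*})$ is not a separation of the connected matroid $M$, we have
\[
r(R\cap H)=r\bigl(\cl(C^{*})\cap H\bigr)=r(C^{*})+r(H)-r(M)>0,
\]
so $R\cap H$ is non-empty. From $R\cap H\subseteq Z_{0}\subseteq H$ we get $R\cap Z_{0}=R\cap H\ne\emptyset$, and $R\ne Z_{0}$ since $C^{*}\subseteq R$ while $C^{*}\cap H=\emptyset$. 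Now $R\subseteq R$, $Z_{0}\subseteq H$, and $R\cap H=R\cap Z_{0}$ show that the modular cover $(H,R)$ certifies the adjacency of $R$ and $Z_{0}$ in $R(M)$. Hence $R$ is adjacent to a vertex of the connected graph $R(M|H)=R(M)-R$, so $R(M)$ is connected, completing the induction.

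The only real subtlety I anticipate is the bookkeeping around the degenerate case $R=E(M)$ (equivalently, $C^{*}$ spanning), where $M$ has a unique rotunda and $R(M)$ is trivially connected; once that case is separated out, Proposition \ref{mod-hyp-rot-graph} does almost all of the work, and the remaining content is the short verification via Proposition \ref{supsol-mod-sep} that the single new rotunda arising in case (b) is not an isolated vertex.
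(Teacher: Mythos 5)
Your proof is correct, but it takes a genuinely different route from the paper. The paper handles the hard direction by invoking Lemma \ref{ROTgraph-is-RCgraph} to realise $R(M)$ as $C_{R}(G)$ for a $2$\dash connected chordal graph $G$, and then cites the connectivity of reduced clique graphs from the companion paper (Corollary 3.1 of \cite{graphpaper}); given that machinery, the argument is one line. You instead give a self-contained induction on $r(M)$ inside matroid theory: restrict to a modular hyperplane $H$ (using Propositions \ref{saturated-restriction} and \ref{mod-hyp-connected} to keep the hypotheses), apply Proposition \ref{mod-hyp-rot-graph} so that connectivity of $R(M|H)$ transfers directly in case (a), and in case (b) show the new vertex $R=\cl(C^{*})$ is not isolated because the modular cover $(H,R)$ from Proposition \ref{supsol-mod-sep} certifies its adjacency to a rotunda $Z_{0}\supseteq R\cap H$ of $M|H$; your verification that $H\cap R=Z_{0}\cap R\ne\emptyset$ (via modularity of $H$ and connectivity of $M$) and your separate treatment of the spanning-$C^{*}$ case are exactly the points that need care, and they check out. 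The trade-off: your argument avoids both the heavy compliance lemma and the external reference, so it is more elementary and keeps the result internal to the matroid setting, while the paper's approach is essentially free once the correspondence with chordal graphs has been established and also illustrates that correspondence in action.
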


\begin{proof}
In Proposition \ref{conn-components} we noted that if $N_{1},\ldots, N_{k}$ are the connected components of $M$, then $R(M)$ is the disjoint union of $R(N_{1}),\ldots, R(N_{k})$.
So if $M$ is not connected then neither is $R(M)$.
For the converse, we let $M$ be a connected supersolvable saturated matroid.
Lemma \ref{ROTgraph-is-RCgraph} shows that $R(M)$ is isomorphic to $C_{R}(G)$ where $G$ is a $2$\dash connected chordal graph $G$.
From Corollary 3.1 in \cite{graphpaper} we see that $C_{R}(G)$, and hence $R(M)$, is connected.
\end{proof}

\section{Clique trees and rotunda trees}
\label{TREES}

\begin{definition}
Let $M$ be a matroid and let $T$ be a tree.
Let $\tau$ be a function from $V(T)$ to $\mathcal{P}(E(M))$.
Assume that for every element $x\in E(M)$ there is at least one vertex $v\in V(T)$ such that $x\in \tau(v)$.
In this case we say that $(T,\tau)$ is a \emph{tree-decomposition} of $M$.
If for every element $x\in E(M)$ there is \emph{exactly one} vertex $v\in V(T)$ such that $x\in\tau(v)$ then the tree-decomposition is \emph{strict}.
\end{definition}

In other words, the tree-decomposition is strict if $\{\tau(t)\}_{t\in V(T)}$ is a partition of $E(M)$.

Let $G$ be a graph.
A \emph{clique tree} of $G$ is a pair $(T,\rho)$ where $T$ is a tree and $\rho$ is a bijection from $V(T)$ to the set of maximal cliques of $G$.
We insist that for any $v\in V(G)$, the set $\{t\in V(T)\colon v\in \rho(t)\}$ induces a subtree of $T$.
Clique trees were introduced by Gavril \cite{Gavril}, who showed that a graph has a clique tree if and only if it is chordal.
Our next step is to define a matroid analogue of a clique tree.

\begin{definition}
Let $M$ be a matroid, and let $(T,\tau)$ be a tree-decomposition of $M$ such that $\tau$ is a bijection from $V(T)$ to $\mathcal{R}(M)$.
If, for every $x\in E(M)$, the set $\{t\in V(T)\colon x\in \tau(t)\}$ induces a subtree of $T$, then $(T,\tau)$ is a \emph{rotunda tree} of $M$.
\end{definition}

In the following material we must apply weights to the edges of reduced clique graphs and rotunda graphs.
Let $G$ be a chordal graph.
Let $\sigma$ be a function which takes the set
\[
\{\emptyset\}\cup\{C\cap C'\colon C\ \text{and}\ C'\ \text{are distinct maximal cliques of}\ G\}
\]
to non-negative integers, and where the following conditions hold:
\begin{enumerate}[label = \textup{(\roman*)}]
\item $\sigma(\emptyset) = 0$,
\item if $X$ and $X'$ are in the domain of $\sigma$ and $X$ is a proper subset of $X'$, then $\sigma(X)<\sigma(X')$.
\end{enumerate}
In this case $\sigma$ is a \emph{legitimate weighting} of $G$.
The function $\sigma$ applies a weight to each edge of $C_{R}(G)$, where the weight of the edge between $C$ and $C'$ is $\sigma(C\cap C')$.
The following result is the main theorem of \cite{graphpaper}.

\begin{theorem}
\label{maintheorem2}
Let $G$ be a connected chordal graph and let
$\sigma$ be a legitimate weighting.
Every clique tree is a spanning tree of $C_{R}(G)$ and every edge of $C_{R}(G)$ is contained in a clique tree.
Moreover, a spanning tree of $C_{R}(G)$ is a clique tree if and only if it has maximum weight amongst all spanning trees.
\end{theorem}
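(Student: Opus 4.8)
The plan is to derive this from the structural facts about clique trees and reduced clique graphs developed in \cite{graphpaper}; since Theorem~\ref{maintheorem2} is quoted verbatim from that paper, what follows is the route one would take to prove it from scratch. The first step is to show that the edge set of every clique tree lies in $C_R(G)$. Let $(T,\rho)$ be a clique tree, let $CC'$ be an edge of $T$, and put $S=C\cap C'$. Deleting this edge splits $T$ into subtrees $T_1\ni C$ and $T_2\ni C'$. For any vertex $v\notin S$ the family $K_v$ of maximal cliques containing $v$ induces a subtree of $T$ that omits $C$ or $C'$, hence lies wholly in $T_1$ or wholly in $T_2$; this partitions $V(G)-S$ into nonempty sets $V_1\supseteq C-S$ and $V_2\supseteq C'-S$. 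Every edge of $G$ lies in a maximal clique and therefore has both ends in $V_1$, both ends in $V_2$, or an end in $S$; so $S$ separates $V_1$ from $V_2$ in $G$. In particular $S\ne\emptyset$ (as $G$ is connected) and any path from a vertex of $C-C'$ to a vertex of $C'-C$ meets $S$, i.e.\ $C$ and $C'$ form a separating pair. Thus $CC'\in E(C_R(G))$ and every clique tree is a spanning tree of $C_R(G)$.

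The second step is to show that every clique tree has maximum $\sigma$-weight, using the cycle characterisation of maximum-weight spanning trees: a spanning tree is of maximum weight if and only if, for every non-tree edge $e$, the weight of $e$ is at most the weight of every edge on the fundamental cycle of $e$. Fix a clique tree $T$ and a non-tree edge $e=CC'$ of $C_R(G)$, with fundamental path $C=X_0,X_1,\dots,X_k=C'$ in $T$. The clique-intersection property of clique trees (equivalent to the induced-subtree condition) gives $C\cap C'=X_0\cap X_k\subseteq X_i$ for all $i$, hence $C\cap C'\subseteq X_i\cap X_{i+1}$ for each $i$; by the strict-monotonicity condition (ii) on a legitimate weighting, $\sigma(C\cap C')\le\sigma(X_i\cap X_{i+1})$, with strict inequality unless the two intersections coincide. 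So $T$ satisfies the cycle condition and is a maximum-weight spanning tree of $C_R(G)$. Consequently all clique trees have the same $\sigma$-weight, namely the maximum over all spanning trees.

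The third and hardest step is the converse: a maximum-weight spanning tree of $C_R(G)$ must be a clique tree. This is exactly the point at which the argument of Galinier, Habib and Paul breaks down, as explained in \cite{graphpaper}. The approach is by contradiction: if $T$ is a spanning tree of $C_R(G)$ that is not a clique tree, then some vertex $v$ has $K_v$ failing to induce a subtree of $T$; using that $C_R(G)$ restricted to $K_v$ is connected, one finds a non-tree edge $CC'$ with $v\in C\cap C'$ whose fundamental cycle leaves $K_v$, and one performs an edge exchange along that cycle. The delicate part is arranging the exchange so that the $\sigma$-weight \emph{strictly} increases rather than merely stays equal, which contradicts maximality; making this exchange rigorous for an arbitrary legitimate weighting is precisely the correction supplied in \cite{graphpaper}, and it is the main obstacle in the whole argument.

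Finally, one must show that every edge of $C_R(G)$ lies in some clique tree. With the equivalence "clique tree $=$ maximum-weight spanning tree" in hand, and the connectedness of $C_R(G)$ (Corollary~3.1 of \cite{graphpaper}), this follows by a standard matroid argument on maximum-weight spanning trees; alternatively it can be proved directly by assembling a clique tree through a prescribed separating pair $CC'$ from clique trees of the chordal pieces obtained by splitting $G$ along the minimal separator $C\cap C'$ and joining them across the edge $CC'$. Putting the four steps together gives the theorem, with the converse direction of the last sentence being the step that genuinely requires care.
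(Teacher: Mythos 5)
There is a genuine gap here, though you are candid about it. Note first that the paper you are being compared against does not prove Theorem~\ref{maintheorem2} at all: it is imported verbatim as the main theorem of \cite{graphpaper}, so there is no internal argument to measure your sketch against. Your Step 1 (every clique-tree edge is a separating pair, via the partition of $V(G)-S$ induced by the two subtrees) and Step 2 (every clique tree satisfies the cycle criterion for maximum-weight spanning trees, using the clique-intersection property $C\cap C'\subseteq X_i$ along the tree path together with condition (ii) of a legitimate weighting) are correct and complete. But Step 3, the converse that a maximum-weight spanning tree of $C_R(G)$ must be a clique tree, is precisely the substance of the theorem --- it is the step whose original proof by Galinier, Habib and Paul was flawed --- and you do not carry it out: you describe the shape of an exchange argument and then state that making the weight increase strict ``is precisely the correction supplied in \cite{graphpaper}.'' Deferring the crux to the very reference the theorem is quoted from means the proposal is not a proof of the statement.

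Step 4 also has a concrete defect as written. From the equivalence ``clique tree $=$ maximum-weight spanning tree'' and connectedness of $C_R(G)$ alone, it does \emph{not} follow by any standard matroid/greedy argument that every edge of $C_R(G)$ lies in some maximum-weight spanning tree: in a general connected weighted graph an edge that is the unique minimum-weight edge of some cycle lies in no maximum-weight spanning tree, so one must rule this behaviour out for $C_R(G)$ specifically. This requires a structural argument --- for instance exhibiting, for each edge $CC'$, a cut of $C_R(G)$ in which $\sigma(C\cap C')$ is maximal, or the direct construction you gesture at (splitting $G$ along the minimal separator $C\cap C'$ and gluing clique trees of the pieces across $CC'$). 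As with Step 3, that construction is only sketched, so both halves of the theorem that go beyond the easy direction remain unproven in your proposal.
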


Galinier, Habib, and Paul \cite{GHP95} prove the special case of Theorem \ref{maintheorem2} where $\sigma(C\cap C') = |C\cap C'|$, but their proof contains a flaw which is explained in \cite{graphpaper}.
Next we consider the matroid analogue of legitimate weightings.

\begin{definition}
Let $M$ be a supersolvable saturated matroid.
Let $\sigma$ be a function taking
\[
\{\emptyset\}\cup\{R\cap R'\colon R, R'\in \mathcal{R}(M), R\ne R'\}
\]
to non-negative integers, where:
\begin{enumerate}[label = \textup{(\roman*)}]
\item $\sigma(\emptyset) = 0$,
\item if $X$ and $X'$ are in the domain of $\sigma$ and $X$ is a proper subset of $X'$, then $\sigma(X)<\sigma(X')$.
\end{enumerate}
Then $\sigma$ is a \emph{legitimate weighting} of $M$.
\end{definition}

For examples of legitimate weightings, we may set $\sigma(R\cap R')$ to be either the rank or the size of $R\cap R'$, for each pair of rotunda $R$ and $R'$.
In the case where we use rank, the legitimacy of the weighting relies on the fact that the intersection of two rotunda is a flat.

Now we are able to prove Theorem \ref{trees_main_1}, which we restate in a more general form here.

\begin{theorem}
\label{trees_main_2}
Let $M$ be a connected supersolvable and saturated matroid and let $\sigma$ be a legitimate weighting of $M$.
Every rotunda tree of $M$ is a spanning tree of $R(M)$ and every edge of $R(M)$ is contained in a rotunda tree.
Moreover, a spanning tree of $R(M)$ is a rotunda tree if and only if it has maximum weight amongst all spanning trees.
\end{theorem}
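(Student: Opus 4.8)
The plan is to transport the theorem across the isomorphism between $R(M)$ and $C_{R}(G)$ supplied by Lemma \ref{ROTgraph-is-RCgraph}, thereby reducing everything to Theorem \ref{maintheorem2}. Since $M$ is connected, Lemma \ref{ROTgraph-is-RCgraph} provides a $2$\dash connected chordal graph $G$ and a function $\theta\colon E(M)\to\mathcal{P}(V(G))$ compliant with $M$; in particular, viewed as a map on power sets, $\theta$ restricts to an isomorphism from $R(M)$ to $C_{R}(G)$ that carries each rotunda $R$ to the maximal clique $\theta(R)$. From conditions (i) and (ii) of Definition \ref{compliant-graph} one extracts the elementary fact that a vertex $v\in V(G)$ lies in $\theta(X)$, for $X\subseteq E(M)$, if and only if the unique element $x$ with $v\in\theta(x)$ belongs to $X$. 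This immediately yields $\theta(X)\cap\theta(X')=\theta(X\cap X')$ for all $X,X'\subseteq E(M)$, and (since $R\subseteq R'$ iff $\theta(R)\subseteq\theta(R')$) that $\theta$ is injective on power sets.

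Next I would verify that $(T,\tau)$ is a rotunda tree of $M$ if and only if $(T,\theta\circ\tau)$ is a clique tree of $G$. Because $\tau$ is a bijection from $V(T)$ to $\mathcal{R}(M)$ and $\theta$ is a bijection from $\mathcal{R}(M)$ to the maximal cliques of $G$, the composite $\theta\circ\tau$ is a bijection from $V(T)$ to those maximal cliques. For the subtree conditions, if $x\in E(M)$ with $\theta(x)=\{v,v'\}$, then for each $t\in V(T)$ the conditions $x\in\tau(t)$, $v\in(\theta\circ\tau)(t)$, and $v'\in(\theta\circ\tau)(t)$ are all equivalent, by the elementary fact above. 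Hence the family $\{\{t\colon v\in(\theta\circ\tau)(t)\}\colon v\in V(G)\}$ coincides, up to repetition, with the family $\{\{t\colon x\in\tau(t)\}\colon x\in E(M)\}$; so every member of one family induces a subtree of $T$ exactly when every member of the other does, which is the claimed equivalence. The two\dash to\dash one vertex correspondence causes no trouble here.

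I would then transfer the weighting. Every maximal clique of $G$ is $\theta(R)$ for a unique rotunda $R$, and distinct maximal cliques correspond to distinct rotunda, so every element of $\{\emptyset\}\cup\{C\cap C'\colon C\ne C'\ \text{maximal cliques of}\ G\}$ has the form $\theta(R\cap R')$ for distinct rotunda $R,R'$, using $\theta(R)\cap\theta(R')=\theta(R\cap R')$. Setting $\sigma'(\theta(R\cap R'))=\sigma(R\cap R')$ is well defined because $\theta$ is injective on power sets, and $\sigma'$ is a legitimate weighting of $G$: $\sigma'(\emptyset)=\sigma'(\theta(\emptyset))=\sigma(\emptyset)=0$, and if $\theta(Y)\subsetneq\theta(Y')$ then $Y\subsetneq Y'$, so $\sigma'(\theta(Y))=\sigma(Y)<\sigma(Y')=\sigma'(\theta(Y'))$. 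Moreover, the weight $\sigma'$ assigns to the edge $\theta(R)\theta(R')$ of $C_{R}(G)$ equals the weight $\sigma$ assigns to the edge $RR'$ of $R(M)$, so $\theta$ is a weight\dash preserving isomorphism.

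Finally I would invoke Theorem \ref{maintheorem2}. The graph $G$ is $2$\dash connected, hence a connected chordal graph, so Theorem \ref{maintheorem2} applies to $G$ with the legitimate weighting $\sigma'$. Under the weight\dash preserving isomorphism $\theta$, spanning trees of $R(M)$ correspond to spanning trees of $C_{R}(G)$ of equal weight, and by the second paragraph rotunda trees of $M$ correspond to clique trees of $G$. Reading the three conclusions of Theorem \ref{maintheorem2} on the $G$ side and pulling them back through $\theta$ gives: every rotunda tree of $M$ is a spanning tree of $R(M)$; every edge of $R(M)$ lies in some rotunda tree; and a spanning tree of $R(M)$ is a rotunda tree if and only if it has maximum weight among all spanning trees. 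I expect the only real obstacle to be bookkeeping --- checking that the power\dash set map $\theta$ behaves well enough (intersections, injectivity, the two\dash to\dash one vertex correspondence) to make the translations of ``rotunda tree'' and of the weighting exact; the substantive content already resides in Lemma \ref{ROTgraph-is-RCgraph} and Theorem \ref{maintheorem2}.
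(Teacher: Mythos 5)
Your proposal is correct and follows essentially the same route as the paper: invoke Lemma \ref{ROTgraph-is-RCgraph} to obtain a compliant pair $(G,\theta)$, use conditions (i) and (ii) of compliance to show that $(T,\tau)$ is a rotunda tree of $M$ exactly when $(T,\theta|_{\mathcal{R}(M)}\circ\tau)$ is a clique tree of $G$, transfer $\sigma$ to a legitimate weighting of $C_{R}(G)$, and pull back the three conclusions of Theorem \ref{maintheorem2} through the weight-preserving isomorphism. The paper packages the isomorphism via an auxiliary graph $H$ with maps $\pi_{G},\pi_{M}$, but the substance is identical to your argument.
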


\begin{proof}
We apply Lemma \ref{ROTgraph-is-RCgraph} and let $G$ be a $2$\dash connected chordal graph and let $\theta\colon E(M)\to\mathcal{P}(V(G))$ be a function such that $(G,\theta)$ is compliant with $M$.
Let $H$ be a graph that is isomorphic to both $C_{R}(G)$ and $R(M)$.
Let $\pi_{G}$ be a bijection from $V(H)$ to the family of maximal cliques of $G$, and let $\pi_{M}$ be a bijection from $V(H)$ to $\mathcal{R}(M)$, such that $\pi_{G}$ and $\pi_{M}$ are both isomorphisms.

Let $(T,\tau)$ be a rotunda tree of $M$.
Define $\rho$ to be the composition $\theta|_{\mathcal{R}(M)}\circ \tau$.
This means that $\rho$ is a bijection from $V(T)$ to the set of maximal cliques of $G$.
Let $v$ be an arbitrary vertex of $G$, and let $x$ be the unique element of $E(M)$ such that $v$ is in $\theta(x)$.
Now
\begin{equation}
\label{eq2}
\{t\in V(T)\colon v\in \rho(t)\} = \{t\in V(T)\colon x\in \tau (M)\}.
\end{equation}
Because the latter set induces a connected subgraph of $T$, so does the former.
This shows that $(T,\rho)$ is a clique tree of $G$.
Therefore $T$ is (isomorphic to) a spanning tree of $H$ by Theorem \ref{maintheorem2}.
We have now shown that any rotunda tree of $M$ is a spanning tree of $R(M)$.
Moreover, if $e$ is an arbitrary edge of $H$, then there is some spanning tree $T$ of $H$ such that $T$ contains $e$ and $(T,\rho)$ is a clique tree of $G$ for some bijection $\rho$.
Let $\tau$ be the composition $(\theta|_{\mathcal{R}(M)})^{-1}\circ\rho$, so that $\tau$ is a bijection from $V(T)$ to $\mathcal{R}(M)$.
If $x$ is an arbitrary element of $E(M)$ and $v$ is a vertex in $\theta(x)$, then Equation \eqref{eq2} still holds and we see that $(T,\tau)$ is a rotunda tree of $M$ that contains the edge $e$.
Thus any edge of $R(M)$ is contained in a rotunda tree of $M$.

We apply weights to the edges of $H$.
If $u$ and $u'$ are adjacent in $H$, then we weight the edge between them with $\sigma(\pi_{M}(u)\cap\pi_{M}(u'))$.
It is not difficult to see that this weighting of $H$ is also a legitimate weighting of $C_{R}(G)$;
that is, if $C$ and $C'$ are maximal cliques of $G$ that are adjacent in $C_{R}(G)$, and $\sigma_{G}$ applies the weight $\sigma(\theta^{-1}(C)\cap\theta^{-1}(C'))$ to the edge between $C$ and $C'$, then $\sigma_{G}$ is a legitimate weighting of $G$.

Let $T$ be a maximum-weight spanning tree of $H$.
Then $(T,\pi_{G})$ is a clique tree of $G$, by Theorem \ref{maintheorem2}.
Exactly as before, we see that $(T,(\theta|_{\mathcal{R}(M)})^{-1}\circ\pi_{G})$ is a rotunda tree of $M$.
On the other hand, if $T$ is a spanning tree of $H$ and $(T,\tau)$ is a rotunda tree of $M$, then $(T,\theta|_{\mathcal{R}(M)}\circ\tau)$ is a clique tree of $G$.
Hence $T$ is a maximum-weight spanning tree of $H$.
We have now proved that the rotunda trees of $M$ are exactly the maximum-weight spanning trees of $R(M)$, as claimed.
\end{proof}

It follows from Theorem \ref{trees_main_1} that $R(M)$ is the exactly the union of all rotunda trees of $M$.

\section{Tree-decompositions}
\label{DECOMPS}

We recall the definition of graph tree-width.
Let $G$ be a graph.
Let $T$ be a tree and let $\rho$ be a function from $V(T)$ to $\mathcal{P}(V(G))$ such that for every $v\in V(G)$ the set $\{t\in V(T)\colon v\in\rho(t)\}$ is non-empty and induces a subtree of $T$.
We further insist that if $u$ and $v$ are adjacent vertices of $G$, then $u,v\in\rho(t)$ for some $t\in V(T)$.
Then $(T,\rho)$ is a \emph{tree-decomposition} of $G$, and the sets $\rho(t)$ are the \emph{bags} of the decomposition.
The \emph{width} of $(T,\rho)$ is the maximum size of a bag, and the \emph{tree-width} of $G$ is the minimum width taken over all tree-decompositions.

Any clique tree of a chordal graph is a tree-decomposition of optimal width, where the bags of the tree-decomposition are exactly the maximal cliques \cite{Heggernes}*{p.~14}.
We now move towards a matroid analogue of this result.
We first introduce the notion of \emph{matroid tree-width}, as developed by Hlin\v{e}n\'{y} and Whittle \cite{hlineny_whittle}.
Recall that a tree-decomposition of a matroid $M$ is a tree $T$ along with a function $\tau\colon V(T)\to\mathcal{P}(E(M))$ such that every element $x\in E(M)$ is in at least one set $\tau(t)$.

\begin{definition}
Let $M$ be a matroid and let $(T,\tau)$ be a tree-decomposition of $M$.
Let $t$ be a node of $T$ and let $T_{1},\ldots, T_{d}$ be the connected components of $T-t$.
For each $i$ let $F_{i}$ be $\cup_{s\in V(T_{i})} \tau(s)$.
We define the \emph{node-width} of $t$ to be
\[
\left(\sum_{i=1}^{d}r\left(\tau(t)\cup \mathop{\bigcup_{k=1}^{d}}_{k\ne i}F_{k}\right)\right)-(d-1)r(M).
\]
The \emph{width} of $(T,\tau)$ is the maximum node-width of any node in $T$.
The \emph{tree-width} of $M$ (denoted $\tw(M)$) is the smallest width of any tree-decomposition of $M$.
\end{definition}

Note that this definition is not exactly that used by Hlin\v{e}n\'{y} and Whittle because in their definition the minimum ranges over \emph{strict} tree-decompositions, rather than all tree-decompositions.
To see that this makes no difference to the definition, assume that the element $x\in E(M)$ is contained in both $\tau(u)$ and $\tau(v)$, where $u$ and $v$ are distinct vertices of the tree $T$.
We redefine $\tau$ by removing $x$ from $\tau(u)$.
It is easy to confirm that the width of no node is increased by this change.
By repeating this process we can produce a strict tree-decomposition with width no greater than the width of our original decomposition.
This argument shows that there exists a strict tree-decomposition whose width is as small as possible amongst all tree-decompositions.
Thus extending Hlin\v{e}n\'{y} and Whittle's definition to include non-strict tree-decompositions makes no difference to the parameter.

We can always let $T$ be a tree with a single node, and let $\tau$ take every element of $E(M)$ to this node.
It follows from the definition that the width of $(T,\tau)$ is $r(M)$.
This shows that the tree-width of any matroid $M$ is bounded above by $r(M)$.

\begin{proposition}
\label{tw-round}
Let $M$ be a round matroid.
Then $\tw(M) = r(M)$.
\end{proposition}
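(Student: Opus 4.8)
The plan is to combine the inequality $\tw(M)\le r(M)$, which was noted just before the statement and holds for every matroid, with a matching lower bound that uses roundness. Since the discussion preceding the statement shows that the tree-width is attained by a \emph{strict} tree-decomposition, it suffices to prove that every strict tree-decomposition $(T,\tau)$ of a round matroid $M$ has width at least $r(M)$.

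First I would record the basic consequence of roundness: for any $X\subseteq E(M)$, at least one of $X$ and $E(M)-X$ is spanning, since otherwise $(\cl(X),\cl(E(M)-X))$ is a vertical cover of $M$. Now fix a strict tree-decomposition $(T,\tau)$. For each edge $e$ of $T$, deleting $e$ splits $T$ into two subtrees, and because the decomposition is strict the two unions of bags over these subtrees partition $E(M)$; by the previous sentence at least one of them is spanning. Orient $e$ towards an endpoint whose side (the union of bags over the subtree containing that endpoint) is spanning, choosing arbitrarily if both sides are spanning. This defines an orientation of $T$. Since $T$ is a finite tree, its edge-orientation contains no directed cycle, so some node $t$ is a sink, meaning that every edge incident with $t$ points towards $t$.

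The key computation is then the node-width of this sink. Let $T_1,\dots,T_d$ be the components of $T-t$ and set $F_i=\bigcup_{s\in V(T_i)}\tau(s)$; by strictness $\tau(t)\cup\bigcup_{k\ne i}F_k=E(M)-F_i$ for each $i$. The edge joining $t$ to its neighbour in $T_i$ has, upon deletion, the set $F_i$ as the side not containing $t$ and $E(M)-F_i$ as the side containing $t$; as $t$ is a sink this edge is oriented towards $t$, so $E(M)-F_i$ is spanning, i.e.\ $r(E(M)-F_i)=r(M)$, for every $i$. Substituting into the definition of node-width gives
\[
\sum_{i=1}^{d} r(E(M)-F_i)-(d-1)r(M)=d\,r(M)-(d-1)r(M)=r(M),
\]
while the degenerate case $d=0$ (a one-node tree) yields node-width $r(M)$ directly. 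Hence $(T,\tau)$ has width at least $r(M)$, and since this holds for every strict tree-decomposition we get $\tw(M)\ge r(M)$; together with $\tw(M)\le r(M)$ this proves $\tw(M)=r(M)$.

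I do not anticipate a real obstacle. The only points that need a little care are the reduction to strict tree-decompositions, which is already handled in the paragraph before the statement, and the existence of a sink in the oriented tree, which follows because a tree is acyclic. Everything else is the bookkeeping identity $\tau(t)\cup\bigcup_{k\ne i}F_k=E(M)-F_i$ and a single application of roundness at the chosen sink.
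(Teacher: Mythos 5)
Your proposal is correct and follows essentially the same argument as the paper: orient each edge of a strict tree-decomposition towards a spanning side (roundness guarantees one exists), locate a node all of whose incident edges point inwards (the paper takes the last node of a maximum-length directed path, you invoke the existence of a sink in an acyclic orientation), and compute that its node-width is $r(M)$, giving $\tw(M)\geq r(M)$ to match the trivial upper bound. The only differences are cosmetic.
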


\begin{proof}
Let $E$ be the ground set of $M$.
Let $(T,\tau)$ be any strict tree-decomposition of $M$.
We direct each edge of $T$ in the following way.
Let $e$ be an arbitrary edge of $T$ and assume that $e$ joins $u_{1}$ to $u_{2}$.
For each $i$ let $T_{i}$ be the connected component of $T\backslash e$ that contains $u_{i}$.
Let $U_{i}=\cup_{s\in V(T_{i})}\tau(s)$.
Thus $(U_{1},U_{2})$ is a partition of $E$ (since the tree-decomposition is strict), and because $M$ is round, either $U_{1}$ or $U_{2}$ is spanning.
If $U_{i}$ is spanning then we direct $e$ from $u_{3-i}$ to $u_{i}$.
Note that it is possible for an edge to have two directions applied to it.

Let $P$ be a maximum length directed path in $T$, and assume that $t$ is the final node in $P$.
Let $T_{1},\ldots, T_{d}$ be the connected components of $T-t$ and let $F_{i}=\cup_{s\in V(T_{i})}\tau(s)$.
Because the edges incident with $t$ are all directed towards $t$, it follows that $E-F_{i}$ is spanning for each $i$.
Since $F_{1},\ldots, F_{d}$ are pairwise disjoint, the width of $t$ is
\[
r(M)-\sum_{i=1}^{d}(r(M)-r(E-F_{i})) = r(M) - \sum_{i=1}^{d}(r(M)-r(M)) = r(M).
\]
Hence the node-width of $t$ is equal to $r(M)$.
Thus $\tw(M) \geq r(M)$.
We have already observed that $\tw(M)\leq r(M)$ so  the proof is complete.
\end{proof}

Hlin\v{e}n\'{y} and Whittle show that if $N$ is a minor of the matroid $M$, then $\tw(N)\leq \tw(M)$ \cite{hlineny_whittle}*{Proposition 3.1}.
The next result follows from this observation and Proposition \ref{tw-round}.

\begin{corollary}
\label{tw-rotunda}
Let $M$ be a matroid and let $R$ be a round flat of $M$.
Then $\tw(M) \geq r(R)$.
\end{corollary}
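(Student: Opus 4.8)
The plan is to pass to the restriction $M|R$ and then invoke the two results that the corollary is said to follow from. First I would observe that since $R$ is a round flat of $M$, the matroid $M|R$ is round: this is immediate from the definition of a round subset, namely that $X$ is round precisely when $M|X$ is round. Because $R$ is a flat, its rank in $M|R$ coincides with $r(R)$, so Proposition \ref{tw-round} applies to $M|R$ and gives $\tw(M|R) = r(M|R) = r(R)$.

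Next, $M|R$ is a restriction of $M$, hence a minor of $M$, so the minor-monotonicity of matroid tree-width established by Hlin\v{e}n\'{y} and Whittle \cite{hlineny_whittle}*{Proposition 3.1} yields $\tw(M|R) \leq \tw(M)$. Chaining this with the previous equality gives $\tw(M) \geq \tw(M|R) = r(R)$, which is exactly the claimed bound. There is essentially no obstacle here; the only point requiring a moment's care is the observation that restricting $M$ to a flat does not change the rank of that flat, which is routine, and the bookkeeping that a restriction counts as a minor for the purposes of the cited monotonicity statement.
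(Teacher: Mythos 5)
Your argument is exactly the one the paper intends: restrict to the round flat $R$, apply Proposition \ref{tw-round} to get $\tw(M|R)=r(R)$, and use the minor-monotonicity of tree-width from \cite{hlineny_whittle}*{Proposition 3.1}. This matches the paper's (implicit) proof, so the proposal is correct and takes essentially the same approach.
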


\begin{proposition}
\label{modular-edge}
Let  $(T,\tau)$ be a rotunda tree of $M$, a supersolvable saturated matroid.
Let $e$ be an edge of $T$ that joins vertices $u_{1}$ and $u_{2}$.
For $i=1,2$, let $T_{i}$ be the connected component of $T\backslash e$ that contains $u_{i}$ and let $F_{i}$ be $\cup_{t\in V(T_{i})}\tau(t)$.
Then $(F_{1},F_{2})$ is a modular cover of $M$ and $F_{1}\cap F_{2} = \tau(u_{1})\cap\tau(u_{2})$.
\end{proposition}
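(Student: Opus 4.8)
The plan is to move the problem into the setting of chordal graphs by means of a compliant graph, use there a transparent observation about separators, and transport the conclusion back to $M$ through condition (iv) of Definition \ref{compliant-graph}. Assume first that $M$ is connected; the disconnected case reduces to this, since the ground sets of the components of $M$ are modular flats of $M$. Using Lemma \ref{ROTgraph-is-RCgraph}, fix a $2$\dash connected chordal graph $G$ and a function $\theta\colon E(M)\to\mathcal{P}(V(G))$ such that $(G,\theta)$ is compliant with $M$. Write $R_i=\tau(u_i)$ and $S_i=\theta(F_i)=\bigcup_{t\in V(T_i)}\theta(\tau(t))$ for $i=1,2$. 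Because $\tau$ is a bijection from $V(T)$ onto $\mathcal{R}(M)$ and $\theta|_{\mathcal{R}(M)}$ is a bijection from $\mathcal{R}(M)$ onto the maximal cliques of $G$, the map $t\mapsto\theta(\tau(t))$ is a bijection from $V(T)$ onto the maximal cliques of $G$; in particular every edge of $G$ is contained in $\theta(\tau(t))$ for some $t\in V(T)$.

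First I would dispose of the easy parts. The identity $F_1\cap F_2=\tau(u_1)\cap\tau(u_2)$ follows from the rotunda\dash tree axiom: the inclusion $\tau(u_1)\cap\tau(u_2)\subseteq\tau(u_i)\subseteq F_i$ gives one direction, and if $x\in F_1\cap F_2$ then $x$ lies in $\tau(s_1)$ and $\tau(s_2)$ for some $s_i\in V(T_i)$, so the connected set $\{t\colon x\in\tau(t)\}$ meets both components of $T\backslash e$ and therefore contains $u_1$ and $u_2$, whence $x\in\tau(u_1)\cap\tau(u_2)$. The union $F_1\cup F_2$ equals $E(M)$ because $(T,\tau)$ is a tree\dash decomposition. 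For properness, note that $R_1$ and $R_2$ are incomparable, being distinct maximal round flats, so there is some $x\in R_1\setminus(R_1\cap R_2)$; then $x\in\tau(u_1)\subseteq F_1$ while $x\notin F_2$, since otherwise the connectivity argument just used would place $x$ in $\tau(u_1)\cap\tau(u_2)=R_1\cap R_2$. Hence $F_2\ne E(M)$, and symmetrically $F_1\ne E(M)$. It remains to show that $F_1$ and $F_2$ are modular flats of $M$.

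Put $K=\theta(R_1\cap R_2)$. Property (ii) of compliance gives $\theta(A\cap B)=\theta(A)\cap\theta(B)$ for all $A,B\subseteq E(M)$, so $K=\theta(F_1\cap F_2)=\theta(F_1)\cap\theta(F_2)=S_1\cap S_2$; property (ii) also gives $\theta^{-1}(S_1)=F_1$, since $F_1\subseteq\theta^{-1}(S_1)$ is clear and if $\theta(x)\subseteq S_1$ with $x\notin F_1$ then every bag containing $x$ is indexed by $V(T_2)$, forcing $\theta(x)\subseteq S_1\cap S_2=K=\theta(R_1\cap R_2)$ and hence $x\in R_1\cap R_2\subseteq\tau(u_1)\subseteq F_1$, a contradiction. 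No edge of $G$ joins a vertex of $S_1\setminus K$ to a vertex of $S_2\setminus K$: such an edge would lie in $\theta(\tau(t))$ for some $t$, and $\theta(\tau(t))$ is contained in $S_1$ or in $S_2$ according as $t\in V(T_1)$ or $t\in V(T_2)$, which in either case places one of its endpoints in $S_1\cap S_2=K$. Since $V(G)=S_1\cup S_2$ and $V(G)\setminus K$ is the disjoint union of $S_1\setminus K$ and $S_2\setminus K$, it follows that $S_1\setminus K$ is a union of connected components of $G-K=G-\theta(R_1\cap R_2)$. Now $R_1\cap R_2$ is a flat of $M$ as an intersection of flats, and it is modular by Proposition \ref{modular-intersection} since $R_1$ and $R_2$ are round flats and $M$ is saturated. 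Condition (iv) of compliance therefore yields that $(R_1\cap R_2)\cup\theta^{-1}(S_1\setminus K)$ is a modular flat of $M$, and I would finish by identifying this flat with $F_1$: it lies inside $F_1$ because $R_1\cap R_2\subseteq\tau(u_1)\subseteq F_1$ and $\theta^{-1}(S_1\setminus K)\subseteq\theta^{-1}(S_1)=F_1$, and it contains $F_1$ because any $x\in F_1$ has $\theta(x)\subseteq S_1$, and then either $\theta(x)$ meets $K=\theta(R_1\cap R_2)$, so that $x\in R_1\cap R_2$ by property (ii), or $\theta(x)\subseteq S_1\setminus K$, i.e.\ $x\in\theta^{-1}(S_1\setminus K)$. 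Thus $F_1$ is a modular flat of $M$, and by symmetry so is $F_2$, which completes the proof.

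The one substantive ingredient is compliance condition (iv): it is what converts the purely graph\dash theoretic fact ``$S_1\setminus K$ is a union of components of $G-\theta(R_1\cap R_2)$'' into the matroid statement that $F_1$ is a modular flat. Everything else is a matter of keeping straight the correspondence between elements of $E(M)$, pairs of vertices of $G$, and bags of $T$; I expect the only place requiring genuine care to be the two identifications $\theta^{-1}(S_1)=F_1$ and $F_1=(R_1\cap R_2)\cup\theta^{-1}(S_1\setminus K)$. Nothing in the argument changes if $R_1\cap R_2=\emptyset$, in which case $K=\emptyset$ and $R_1\cap R_2$ is the modular rank\dash zero flat.
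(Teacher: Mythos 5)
Your proposal is correct, and it follows the same overall strategy as the paper's proof: pass to a compliant pair $(G,\theta)$ via Lemma \ref{ROTgraph-is-RCgraph}, show that $F_{1}$ is obtained from the flat $R_{1}\cap R_{2}$ by adjoining $\theta^{-1}$ of a union of connected components of $G-\theta(R_{1}\cap R_{2})$, and then invoke condition (iv) of Definition \ref{compliant-graph}; the identities $F_{1}\cup F_{2}=E(M)$, $F_{1}\cap F_{2}=\tau(u_{1})\cap\tau(u_{2})$, and properness are handled in essentially the same way in both arguments. Where you genuinely differ is the middle step. The paper converts $(T,\tau)$ into a clique tree $(T,\rho)$ of $G$, quotes Theorem \ref{trees_main_1} to get $R_{1}\cap R_{2}\neq\emptyset$, and cites \cite{graphpaper}*{Proposition 2.8} to see that cliques indexed by $T_{1}$ and by $T_{2}$ lie in different components of $G-\theta(R_{1}\cap R_{2})$. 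You instead argue directly: every edge of $G$ lies in some maximal clique $\theta(\tau(t))$, hence in $\theta(F_{1})$ or $\theta(F_{2})$, and since the blocks $\theta(x)$ partition $V(G)$ one has $\theta(F_{1})\cap\theta(F_{2})=\theta(R_{1}\cap R_{2})=K$, so no edge joins $\theta(F_{1})\setminus K$ to $\theta(F_{2})\setminus K$. This buys a self-contained argument that avoids the clique-tree machinery and the companion-paper citation, makes explicit the modularity of $R_{1}\cap R_{2}$ needed to apply (iv) (the paper leaves this tacit), and does not require $R_{1}\cap R_{2}\neq\emptyset$. One small caveat: your reduction of the disconnected case to the connected one is asserted rather than proved — the nodes of $T$ carrying rotunda of a fixed component need not induce a subtree of $T$, so some argument is required — but the paper's proof silently assumes connectivity as well (Lemma \ref{ROTgraph-is-RCgraph} needs it), so in the connected setting your argument is complete and correct.
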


\begin{proof}
Note that every element of $E(M)$ is contained in a round flat, and hence in a rotunda.
From this it follows that $E(M)=F_{1}\cup F_{2}$.

We apply Lemma \ref{ROTgraph-is-RCgraph} and we let $G$ be a $2$\dash connected chordal graph with a function $\theta\colon E(M)\to\mathcal{P}(V(G))$ such that $(G,\theta)$ is compliant with $M$.
Let $\rho$ be the composition $\theta|_{\mathcal{R}(M)}\circ\tau$ so that $\rho$ is a bijection between $V(T)$ and the maximal cliques of $G$.
Exactly as in the proof of Theorem \ref{trees_main_1} we can show that $(T,\rho)$ is a clique tree of $G$.

Define $R_{i}$ to be the rotunda $\tau(u_{i})$.
Let $F$ be the flat $R_{1}\cap R_{2}$.
Note that because $R_{1}$ and $R_{2}$ are adjacent in a rotunda tree of $M$, they are adjacent in $R(M)$ by Theorem \ref{trees_main_1}.
This implies that $F$ is non-empty.
Let $C_{i}=\theta(R_{i})$ for $i=1,2$, so that $C_{1}$ and $C_{2}$ are the corresponding maximal cliques of $G$.
Define $S$ to be $\theta(F) = C_{1}\cap C_{2}$.

Note that if $D$ is a maximal clique of $G$, then $D-S$ is contained in a connected component of $G-S$.
For $i=1,2$, let $v_{i}$ be an arbitrary vertex of $T_{i}$.
Then the path of $T$ from $v_{1}$ to $v_{2}$ contains $u_{1}$ and $u_{2}$.
It follows from \cite{graphpaper}*{Proposition 2.8} that $\rho(v_{1})-S$ and $\rho(v_{2})-S$ are contained in different connected components of $G-S$.
Now we let $U$ be the union of all connected components of $G-S$ that contains $\rho(v)-S$ for some $v$ in $V(T_{1})$.
From the observations in this paragraph we see that $F\cup\theta^{-1}(U)$ is equal to $F_{1}$.
Because $(G,\theta)$ is compliant with $M$ this means that $F_{1}$ is a modular flat of $M$.
Symmetrically, $F_{2}$ is a modular flat.

Let $x$ be an arbitrary element in $F_{1}\cap F_{2}$.
Let $v\in V(T_{1})$ and $v'\in V(T_{2})$ be chosen so that $x$ is in $\tau(v)\cap\tau(v')$.
Because $(T,\tau)$ is a rotunda tree it follows that $x$ is in $\tau(w)$ whenever $w$ is in the path of $T$ from $v$ to $v'$.
In particular, $x$ is in $\tau(u)\cap\tau(u') = R\cap R' = F$.
Thus $F_{1}\cap F_{2}\subseteq F$.
Because $u_{i}$ is in $T_{i}$ for each $i$ it follows that $R_{i}\subseteq F_{i}$.
Therefore $F=R_{1}\cap R_{2}$ is a subset of $F_{1}\cap F_{2}$, and now
\[\tau(u_{1})\cap\tau(u_{2})=R_{1}\cap R_{2} = F = F_{1}\cap F_{2}.\]
From this it follows that $F_{1}\cap F_{2}$ does not contain $R_{1}$ or $R_{2}$, so neither $F_{1}$ nor $F_{2}$ is equal to $E(M)$.
Since $F_{1}$ and $F_{2}$ are proper modular flats of $M$ and $E(M)=F_{1}\cup F_{2}$ we see that $(F_{1},F_{2})$ is a modular cover and the result is proved.
\end{proof}

Let $M$ be a connected supersolvable and saturated matroid.
We will now show that a rotunda tree of $M$ has the properties of an optimal tree-decomposition as per Hlin\v{e}n\'{y} and Whittle.

\begin{theorem}
\label{tree_theorem_two}
Let $M$ be a supersolvable saturated matroid and let $(T,\tau)$ be a rotunda tree of $M$.
Then the width of $(T,\tau)$ is equal to $\tw(M)$.
\end{theorem}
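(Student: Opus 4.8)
The plan is to compute the node-width of every node of $(T,\tau)$ exactly, showing it equals the rank of the rotunda sitting at that node; combined with Corollary \ref{tw-rotunda} and the fact that $(T,\tau)$ is itself a tree-decomposition, this pins down $\tw(M)$.

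Fix a node $t$ of $T$, write $R=\tau(t)$, let $T_{1},\dots,T_{d}$ be the connected components of $T-t$, and put $F_{i}=\bigcup_{s\in V(T_{i})}\tau(s)$ and $G_{i}=R\cup\bigcup_{k\ne i}F_{k}$. Then $G_{i}$ is exactly the union of the bags lying in the component of $T\setminus e_{i}$ that contains $t$, where $e_{i}$ is the edge joining $t$ to its neighbour in $T_{i}$. By Proposition \ref{modular-edge}, $(G_{i},F_{i})$ is a modular cover of $M$; in particular each $G_{i}$ is a modular flat and $G_{i}\cup F_{i}=E(M)$. By definition the node-width of $t$ is $\sum_{i=1}^{d}r(G_{i})-(d-1)r(M)$, so it suffices to prove that this quantity equals $r(R)$.

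First I would show by induction on $j$ that $H_{j}:=G_{1}\cap\dots\cap G_{j}$ is a modular flat with $r(H_{j})=\sum_{i=1}^{j}r(G_{i})-(j-1)r(M)$. For the step, note that $H_{j}=H_{j-1}\cap G_{j}$ is an intersection of modular flats, hence modular by Proposition \ref{modular-intersection}; and that $H_{j-1}\cup G_{j}=E(M)$, because $G_{i}\supseteq F_{j}$ whenever $i\ne j$ gives $H_{j-1}\supseteq F_{j}$, while $G_{j}\cup F_{j}=E(M)$. Modularity of $G_{j}$ then yields $r(H_{j-1})+r(G_{j})=r(H_{j-1}\cup G_{j})+r(H_{j-1}\cap G_{j})=r(M)+r(H_{j})$, which is the asserted recursion. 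Taking $j=d$, it remains to check $H_{d}=R$. The inclusion $R\subseteq H_{d}$ is clear. Conversely, if $x\in H_{d}\setminus R$ and $d\ge 2$, then $x\in F_{k_{0}}$ for some $k_{0}$ since $E(M)=R\cup\bigcup_{k}F_{k}$; applying $x\in G_{k_{0}}$ forces $x\in F_{k_{1}}$ for some $k_{1}\ne k_{0}$, so $x$ belongs to bags in two distinct components of $T-t$, and the subtree property of a rotunda tree then puts $x$ in $\tau(t)=R$, a contradiction. (When $d=0$ the node-width of $t$ is $r(M)$ and $\tau(t)=E(M)$, so the identity is immediate; when $d=1$ we have $H_{1}=G_{1}=R$ directly.) Hence the node-width of $t$ equals $r(H_{d})=r(R)=r(\tau(t))$.

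Since $\tau$ is a bijection onto $\mathcal{R}(M)$, this shows that the width of $(T,\tau)$ equals $\max_{R\in\mathcal{R}(M)}r(R)$. As $(T,\tau)$ is a tree-decomposition of $M$, we get $\tw(M)\le\max_{R\in\mathcal{R}(M)}r(R)$; conversely each rotunda is a round flat, so $\tw(M)\ge r(R)$ for every $R\in\mathcal{R}(M)$ by Corollary \ref{tw-rotunda}, and hence $\tw(M)\ge\max_{R\in\mathcal{R}(M)}r(R)$. Therefore $\tw(M)=\max_{R\in\mathcal{R}(M)}r(R)$, which is the width of $(T,\tau)$. I expect the main obstacle to be the inductive rank identity for $H_{j}$: the delicate point is verifying that at each stage $(H_{j-1},G_{j})$ spans $M$, so that modularity collapses the rank formula, and this is exactly where the tree combinatorics — that $G_{i}$ omits only the bags inside $T_{i}$ — must be combined with Proposition \ref{modular-edge}.
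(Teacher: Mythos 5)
Your proposal is correct and follows essentially the same route as the paper: both prove that the node-width of each node $t$ equals $r(\tau(t))$ by using Proposition \ref{modular-edge} to get modular covers at the edges incident with $t$ and then telescoping the rank sum via modularity, and both finish with Corollary \ref{tw-rotunda} for the lower bound. Your intersections $H_{j}=G_{1}\cap\dots\cap G_{j}$ coincide with the paper's sets $\overline{F}_{>j}$ (the paper identifies them through its Claim, you identify $H_{d}=\tau(t)$ by a direct element-chase using the subtree property), so the difference is only bookkeeping.
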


\begin{proof}
We will show that the node-width of any $t\in V(T)$ is $r(\tau(t))$, so that the width of $(T,\tau)$ is the maximum rank of a rotunda of $M$.
From Corollary \ref{tw-rotunda} we see that $\tw(M)$ is bounded below by this rank, so having completed this task, we will have shown that $(T,\tau)$ is a tree-decomposition of lowest-possible rank.
It will then follow that $\tw(M)$ is equal to the width of $(T,\tau)$.

So let $t$ be an arbitrary vertex in $T$ and let $T_{1},\ldots, T_{d}$ be the connected components of $T-t$.
For each $i$ let $t_{i}$ be the vertex of $T_{i}$ that is adjacent to $t$.
Define $F$ to be $\tau(t)$, and let $F_{i}$ be $\cup_{s\in V(T_{i})}\tau(s)$ for each $i$.
We define $\overline{F}_{i}$ to be
\[
F\cup \mathop{\bigcup_{k=1}^{d}}_{k\ne i}F_{k}.
\]
Therefore the node-width of $t$ is
\begin{equation}
\label{eq3}
r(\overline{F}_{1})+\cdots+r(\overline{F}_{d})-(d-1)r(M).
\end{equation}
In addition, we define $\overline{F}_{>i}$ to be
\[
F\cup \bigcup_{k=i+1}^{d}F_{k}.
\]
Notice that $\overline{F}_{> 1} = \overline{F}_{1}$ and that $\overline{F}_{>d}=F$.

\begin{claim}
\label{tree-claim1}
For any $i\in\{1,\ldots, d-1\}$, the intersection of $\overline{F}_{>i}$ and $\overline{F}_{i+1}$ is $\overline{F}_{>i+1}$.
\end{claim}

\begin{proof}
We note that
\begin{multline*}
\label{eq4}
\overline{F}_{>i}\cap \overline{F}_{i+1}
=(F\cup F_{i+1}\cup \cdots \cup F_{d})\cap (F\cup F_{1}\cup\cdots \cup F_{i}\cup F_{i+2}\cup\cdots F_{d})\\
=(F_{i+1}\cap(F_{1}\cup\cdots\cup F_{i}))\cup(F\cup F_{i+2}\cup\cdots\cup F_{d}).
\end{multline*}
Now $F_{i+1}\cap(F_{1}\cup\cdots\cup F_{i})$ is contained in $F_{i+1}\cap \overline{F}_{i+1}$.
But Proposition \ref{modular-edge} tells us that $F_{i+1}\cap \overline{F}_{i+1}$ is equal to $\tau(t_{i+1})\cap \tau(t)$, which is therefore contained in $\tau(t)=F$.
Hence we can remove $F_{i+1}\cap(F_{1}\cup\cdots\cup F_{i})$ from the equation above and conclude that $\overline{F}_{>i}\cap\overline{F}_{i+1}$ is $F\cup F_{i+2}\cup\cdots\cup F_{d} = \overline{F}_{>i+1}$, as claimed.
\end{proof}

Proposition \ref{modular-edge} implies that $(F_{i},\overline{F}_{i})$ is a modular cover for each $i$, so that in particular $\overline{F}_{2}$ is a modular flat.
Now Equation \eqref{eq3} reduces to
\begin{align*}
&r(\overline{F}_{1}\cap \overline{F}_{2})+
r(\overline{F}_{1}\cup \overline{F}_{2})+r(\overline{F}_{3})+\cdots+r(\overline{F}_{d})-(d-1)r(M)\\
&=r(\overline{F}_{>1}\cap \overline{F}_{2})+r(E(M))
+r(\overline{F}_{3})+\cdots+r(\overline{F}_{d})-(d-1)r(M)\\
&=r(\overline{F}_{>2})
+r(\overline{F}_{3})+\cdots+r(\overline{F}_{d})-(d-2)r(M)
\end{align*}
where we have applied \ref{tree-claim1} in the final step.
Because $\overline{F}_{3}$ is a modular flat, we can again apply \ref{tree-claim1} and reduce to
\[
r(\overline{F}_{>3})
+r(\overline{F}_{4})+\cdots+r(\overline{F}_{d})-(d-3)r(M)
\]
By continuing this process, we find that Equation \eqref{eq3} is equal to
\[
r(\overline{F}_{>d})-(d-d)r(M) = r(F).
\]
So the node-width of $t$ is $r(\tau(t))=r(F)$, exactly as we claimed, and the theorem is proved.
\end{proof}

Now we present the central theorem for this section.
Because a rotunda tree is a tree-decomposition of optimal width for a supersolvable saturated matroid $M$, we can treat it as a canonical tree decomposition of $M$.

\begin{corollary}
\label{bound_on_rotunda_rank}
Let $M$ be a supersolvable saturated matroid. Then $\tw(M)=\max\{r(R)\colon R\in\mathcal{R}(M)\}$.
\end{corollary}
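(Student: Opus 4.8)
The plan is to deduce this directly from Theorem \ref{tree_theorem_two}. That result says the width of any rotunda tree of $M$ equals $\tw(M)$, and its proof establishes more precisely that the node-width of a node $t$ of a rotunda tree $(T,\tau)$ is exactly $r(\tau(t))$; hence the width of $(T,\tau)$ is $\max\{r(\tau(t))\colon t\in V(T)\}$, which is $\max\{r(R)\colon R\in\mathcal{R}(M)\}$ because $\tau$ is a bijection onto $\mathcal{R}(M)$. So it suffices to exhibit a rotunda tree of $M$; doing this also covers the disconnected case, which Theorem \ref{trees_main_1} does not address directly.

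When $M$ is connected, Theorem \ref{trees_main_1} identifies the rotunda trees of $M$ with the maximum-weight spanning trees of $R(M)$ (with respect to, say, the weighting by rank of $R\cap R'$, which is legitimate since an intersection of rotunda is a flat). Every element of $E(M)$ lies in a round flat and hence in a rotunda, so $\mathcal{R}(M)\ne\emptyset$, and $R(M)$ is connected by Lemma \ref{ROTgraph-connected}. A connected graph has a spanning tree, so $M$ has a rotunda tree.

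When $M$ is disconnected, write $M=M_{1}\oplus\cdots\oplus M_{k}$ for its connected components. By Proposition \ref{supersolvable-components} each $M_{i}$ is supersolvable and saturated, so by the previous paragraph it has a rotunda tree $(T_{i},\tau_{i})$. Build a tree $T$ by adding $k-1$ edges that join the $T_{i}$ into a single tree, and let $\tau\colon V(T)\to\mathcal{P}(E(M))$ agree with $\tau_{i}$ on $V(T_{i})$ for each $i$. As noted in the proof of Proposition \ref{supersolvable-components}, the round flats of $M$ are exactly the round flats of its components, so $\mathcal{R}(M)=\mathcal{R}(M_{1})\cup\cdots\cup\mathcal{R}(M_{k})$ and $\tau$ is a bijection from $V(T)$ to $\mathcal{R}(M)$. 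Since any rotunda of $M_{i}$ is a subset of $E(M_{i})$, for $x\in E(M_{i})$ the set $\{t\colon x\in\tau(t)\}$ is contained in $V(T_{i})$ and coincides with $\{t\colon x\in\tau_{i}(t)\}$, which induces a subtree of $T_{i}$, and hence of $T$. Thus $(T,\tau)$ is a rotunda tree of $M$, and applying Theorem \ref{tree_theorem_two} as in the first paragraph gives $\tw(M)=\max\{r(R)\colon R\in\mathcal{R}(M)\}$.

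There is no substantial obstacle here; the only points needing care are the bookkeeping in the disconnected case and the fact that we invoke the computation inside the proof of Theorem \ref{tree_theorem_two} (that the node-width of $t$ is $r(\tau(t))$) rather than merely its statement. If one prefers to avoid the latter, one can instead combine the statement of Theorem \ref{tree_theorem_two} with Corollary \ref{tw-rotunda}, which gives $\tw(M)\ge\max\{r(R)\colon R\in\mathcal{R}(M)\}$ since every rotunda is a round flat, together with the upper bound on node-width that is obtained via repeated application of Proposition \ref{modular-edge} in that proof.
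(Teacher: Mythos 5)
Your proposal is correct and follows essentially the paper's own route: the corollary is intended as an immediate consequence of Theorem \ref{tree_theorem_two}, whose proof shows each node-width equals $r(\tau(t))$, combined with Corollary \ref{tw-rotunda} and the existence of rotunda trees supplied by Theorem \ref{trees_main_2} and Lemma \ref{ROTgraph-connected}. Your explicit verification of existence, and the gluing argument for disconnected $M$ (which the paper leaves implicit, its tree results being stated for connected matroids), is careful bookkeeping rather than a different method.
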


Further observe the following.
Let $\bw(M)$ denote the branch-width of $M$.
By \cite{hlineny_whittle}*{Theorem 4.2} we see that
\[
\bw(M) - 1 \leq \tw(M) \leq \max\{2\bw(M) - 2, 1\}.
\]
We see therefore that given a supersolvable saturated matroid $M$ of branch-width $k$ there must be a rotunda tree of $M$ where the rank of the largest maximal rotunda is bounded by a function of $k$.
As a result, we can conclude that supersolvable saturated matroids have canonical tree decompositions of optimal tree-width in much the same way as chordal graphs have canonical tree decompositions where each bag is a clique of the graph.

This theorem has algorithmic implications for how we can efficiently find the tree-width of a supersolvable saturated matroid.
However, for this to work we would need an efficient method for constructing the rotunda graph.

\section{Acknowledgements}

We thank Geoff Whittle, who supervised the thesis of the second author (which includes much of the material in this article).
We also thank a referee of an earlier draft for numerous helpful comments.

\begin{bibdiv}

\begin{biblist}

\bib{Berge}{article}{
   author={Berge, C.},
   title={Some classes of perfect graphs},
   conference={
      title={Graph Theory and Theoretical Physics},
   },
   book={
      publisher={Academic Press, London},
   },
   date={1967},
   pages={155--165},
}

\bib{CFK04}{article}{
   author={Cordovil, Raul},
   author={Forge, David},
   author={Klein, Sulamita},
   title={How is a chordal graph like a supersolvable binary matroid?},
   journal={Discrete Math.},
   volume={288},
   date={2004},
   number={1-3},
   pages={167--172},
}

\bib{Dirac}{article}{
   author={Dirac, G. A.},
   title={On rigid circuit graphs},
   journal={Abh. Math. Sem. Univ. Hamburg},
   volume={25},
   date={1961},
   pages={71--76},
}

\bib{GHP95}{article}{
   author={Galinier, Philippe},
   author={Habib, Michel},
   author={Paul, Christophe},
   title={Chordal graphs and their clique graphs},
   conference={
      title={Graph-theoretic concepts in computer science},
      address={Aachen},
      date={1995},
   },
   book={
      series={Lecture Notes in Comput. Sci.},
      volume={1017},
      publisher={Springer, Berlin},
   },
   date={1995},
   pages={358--371}
}

\bib{Gavril}{article}{
   author={Gavril, F\u{a}nic\u{a}},
   title={The intersection graphs of subtrees in trees are exactly the
   chordal graphs},
   journal={J. Combinatorial Theory Ser. B},
   volume={16},
   date={1974},
   pages={47--56}
}

\bib{Golumbic04}{book}{
   author={Golumbic, Martin Charles},
   title={Algorithmic graph theory and perfect graphs},
   series={Annals of Discrete Mathematics},
   volume={57},
   edition={2},
   note={With a foreword by Claude Berge},
   publisher={Elsevier Science B.V., Amsterdam},
   date={2004},
   pages={xxvi+314},
}

\bib{Heggernes}{webpage}{
   author={Heggernes, Pinar},
   title={Treewidth, partial $k$-trees, and chordal graphs},
   date={2006},
   url={https://www.ii.uib.no/~pinar/chordal.pdf}
}

\bib{hlineny_whittle}{article}{
   author={Hlin\v{e}n\'{y}, Petr},
   author={Whittle, Geoff},
   title={Matroid tree-width},
   journal={European J. Combin.},
   volume={27},
   date={2006},
   number={7},
   pages={1117--1128},
}

\bib{graphpaper}{webpage}{
   author={Mayhew, Dillon},
   author={Probert, Andrew},
   title={Reduced clique graphs: a correction to ``Chordal graphs and their clique graphs"},
   url={https://arxiv.org/abs/2301.03781}
}

\bib{Oxley11}{book}{
   author={Oxley, James},
   title={Matroid theory},
   series={Oxford Graduate Texts in Mathematics},
   volume={21},
   edition={2},
   publisher={Oxford University Press, Oxford},
   date={2011},
   pages={xiv+684},
}

\bib{stanley}{article}{
   author={Stanley, R. P.},
   title={Supersolvable lattices},
   journal={Algebra Universalis},
   volume={2},
   date={1972},
   pages={197--217},
}

\bib{Whittle85}{thesis}{
   title={Some Aspects of the Critical Problem for Matroids},
   author={Whittle, Geoff},
   date={1985},
   organization={University of Tasmania},
   note={PhD Thesis}
}

\end{biblist}

\end{bibdiv}

\end{document}